\pdfoutput=1
\documentclass[11pt,a4paper]{article}
\usepackage[margin=2.6cm]{geometry}
\usepackage{amsmath,amssymb,amsthm}
\usepackage{xcolor}
\usepackage{booktabs}
\usepackage{tikz}
\usetikzlibrary{arrows.meta}
\usepackage{url}
\usepackage[authoryear,round]{natbib}
\newcommand{\defval}[2]{\expandafter\gdef\csname val@#1\endcsname{#2}}
\newcommand{\val}[1]{\ifcsname val@#1\endcsname\csname val@#1\endcsname\else\textcolor{red}{[\texttt{\detokenize{#1}}]}\fi}
\defval{inst_D_m}{112}
\defval{inst_V_ms}{10}
\defval{inst_Pmax_kW}{3000}
\defval{inst_Cp}{0.45}
\defval{inst_rho_air}{1.225}
\defval{inst_vcutin_ms}{3}
\defval{inst_vcutout_ms}{25}
\defval{inst_vr_ms}{10.34}
\defval{inst_kappa}{2.7155}
\defval{inst_PV_kW}{2715}
\defval{inst_alpha}{0.05324}
\defval{inst_beta}{0.00000}
\defval{inst_gamma}{0.11859}
\defval{inst_calib_rel_err_pct}{7.7}
\defval{inst_rho}{0.293}
\defval{inst_a_src}{0.3534}
\defval{inst_sep_D}{3}
\defval{inst_cell_D}{1}
\defval{inst_stencil}{5}
\defval{inst_Nmax_plus}{2}
\defval{ejorpub_n6_nmax}{8}
\defval{ejorpub_n6_root}{10643}
\defval{ejorpub_n6_bound}{10598}
\defval{ejorpub_n6_best}{10598}
\defval{ejorpub_n6_nt}{4}
\defval{ejorpub_n6_gap_pct}{0.00}
\defval{ejorpub_n6_nodes}{121}
\defval{ejorpub_n6_cuts}{94}
\defval{ejorpub_n6_t_s}{0.2}
\defval{ejorpub_n6_certified}{1}
\defval{ejorpub_n7_nmax}{9}
\defval{ejorpub_n7_root}{16022}
\defval{ejorpub_n7_bound}{14347}
\defval{ejorpub_n7_best}{14347}
\defval{ejorpub_n7_nt}{6}
\defval{ejorpub_n7_gap_pct}{0.00}
\defval{ejorpub_n7_nodes}{472}
\defval{ejorpub_n7_cuts}{473}
\defval{ejorpub_n7_t_s}{0.8}
\defval{ejorpub_n7_certified}{1}
\defval{ejorpub_n8_nmax}{10}
\defval{ejorpub_n8_root}{20273}
\defval{ejorpub_n8_bound}{18022}
\defval{ejorpub_n8_best}{18022}
\defval{ejorpub_n8_nt}{8}
\defval{ejorpub_n8_gap_pct}{0.00}
\defval{ejorpub_n8_nodes}{2\,717}
\defval{ejorpub_n8_cuts}{1\,385}
\defval{ejorpub_n8_t_s}{2.6}
\defval{ejorpub_n8_certified}{1}
\defval{ejorpub_n10_nmax}{12}
\defval{ejorpub_n10_root}{29604}
\defval{ejorpub_n10_bound}{25689}
\defval{ejorpub_n10_best}{25689}
\defval{ejorpub_n10_nt}{11}
\defval{ejorpub_n10_gap_pct}{0.00}
\defval{ejorpub_n10_nodes}{46\,111}
\defval{ejorpub_n10_cuts}{6\,805}
\defval{ejorpub_n10_t_s}{77}
\defval{ejorpub_n10_certified}{1}
\defval{ejorpub_n12_nmax}{14}
\defval{ejorpub_n12_root}{36262}
\defval{ejorpub_n12_bound}{35411}
\defval{ejorpub_n12_best}{32793}
\defval{ejorpub_n12_nt}{14}
\defval{ejorpub_n12_gap_pct}{7.98}
\defval{ejorpub_n12_nodes}{71\,504}
\defval{ejorpub_n12_cuts}{43\,860}
\defval{ejorpub_n12_t_s}{3600}
\defval{ejorpub_n12_certified}{0}
\defval{ejorpub_n14_nmax}{16}
\defval{ejorpub_n14_root}{41474}
\defval{ejorpub_n14_bound}{41083}
\defval{ejorpub_n14_best}{37867}
\defval{ejorpub_n14_nt}{16}
\defval{ejorpub_n14_gap_pct}{8.49}
\defval{ejorpub_n14_nodes}{46\,007}
\defval{ejorpub_n14_cuts}{54\,383}
\defval{ejorpub_n14_t_s}{3600}
\defval{ejorpub_n14_certified}{0}
\defval{ejorpub_n16_nmax}{18}
\defval{ejorpub_n16_root}{46957}
\defval{ejorpub_n16_bound}{46856}
\defval{ejorpub_n16_best}{42698}
\defval{ejorpub_n16_nt}{18}
\defval{ejorpub_n16_gap_pct}{9.74}
\defval{ejorpub_n16_nodes}{45\,879}
\defval{ejorpub_n16_cuts}{60\,032}
\defval{ejorpub_n16_t_s}{3600}
\defval{ejorpub_n16_certified}{0}
\defval{ejorpub_n20_nmax}{22}
\defval{ejorpub_n20_root}{58075}
\defval{ejorpub_n20_bound}{58075}
\defval{ejorpub_n20_best}{52873}
\defval{ejorpub_n20_nt}{22}
\defval{ejorpub_n20_gap_pct}{9.84}
\defval{ejorpub_n20_nodes}{47\,472}
\defval{ejorpub_n20_cuts}{78\,538}
\defval{ejorpub_n20_t_s}{3600}
\defval{ejorpub_n20_certified}{0}
\defval{ejorpub_n24_nmax}{26}
\defval{ejorpub_n24_root}{69234}
\defval{ejorpub_n24_bound}{69234}
\defval{ejorpub_n24_best}{63429}
\defval{ejorpub_n24_nt}{26}
\defval{ejorpub_n24_gap_pct}{9.15}
\defval{ejorpub_n24_nodes}{65\,156}
\defval{ejorpub_n24_cuts}{90\,091}
\defval{ejorpub_n24_t_s}{3601}
\defval{ejorpub_n24_certified}{0}
\defval{ejorpub_largest_certified}{10}
\defval{ejorpub_grids}{9}
\defval{cor2_n10_ejor_runs}{5}
\defval{cor2_n10_ejor_certified}{5}
\defval{cor2_n10_ejor_censored}{0}
\defval{cor2_n10_ejor_exact}{25688.84}
\defval{cor2_n10_ejor_solve_s}{77.3}
\defval{cor2_n10_ejor_tree_s}{77.3}
\defval{cor2_n10_ejor_wall_s}{78}
\defval{cor2_n10_ejor_nodes}{46\,111}
\defval{cor2_n10_ejor_unaccounted_s}{0.8497}
\defval{cor2_n10_ejor_gap_pct}{0.0000}
\defval{cor2_n10_ejor_best}{25688.84}
\defval{cor2_n10_ejor_found_opt}{5}
\defval{cor2_n10_ejor_best_below_pct}{0}
\defval{cor2_n10_base_runs}{5}
\defval{cor2_n10_base_certified}{5}
\defval{cor2_n10_base_censored}{0}
\defval{cor2_n10_base_exact}{25688.84}
\defval{cor2_n10_base_solve_s}{8.1}
\defval{cor2_n10_base_tree_s}{0.2063}
\defval{cor2_n10_base_wall_s}{9}
\defval{cor2_n10_base_nodes}{1}
\defval{cor2_n10_base_unaccounted_s}{0.9320}
\defval{cor2_n10_base_root_gap_pct}{0.0898}
\defval{cor2_n10_base_root}{25711.90}
\defval{cor2_n10_base_root_nz}{1\,512\,668}
\defval{cor2_n10_base_patterns}{71\,200}
\defval{cor2_n10_base_kept}{465}
\defval{cor2_n10_base_inc}{25688.84}
\defval{cor2_n10_base_bound}{25688.92}
\defval{cor2_n10_base_gap_exact_pct}{0.0003}
\defval{cor2_n10_base_maxrss_gb}{0.2898}
\defval{cor2_n10_base_phases_s}{8.1}
\defval{cor2_n10_base_tables_s}{1}
\defval{cor2_n10_base_root_s}{3}
\defval{cor2_n10_base_fix_s}{3.9}
\defval{cor2_n10_loss_runs}{5}
\defval{cor2_n10_loss_certified}{5}
\defval{cor2_n10_loss_censored}{0}
\defval{cor2_n10_loss_exact}{25688.84}
\defval{cor2_n10_loss_solve_s}{7.1}
\defval{cor2_n10_loss_tree_s}{0.1552}
\defval{cor2_n10_loss_wall_s}{8}
\defval{cor2_n10_loss_nodes}{1}
\defval{cor2_n10_loss_unaccounted_s}{0.9141}
\defval{cor2_n10_loss_root_gap_pct}{0.0898}
\defval{cor2_n10_loss_root}{25711.90}
\defval{cor2_n10_loss_root_nz}{896\,364}
\defval{cor2_n10_loss_patterns}{71\,200}
\defval{cor2_n10_loss_kept}{554}
\defval{cor2_n10_loss_inc}{25688.84}
\defval{cor2_n10_loss_bound}{25688.92}
\defval{cor2_n10_loss_gap_exact_pct}{0.0003}
\defval{cor2_n10_loss_maxrss_gb}{0.2044}
\defval{cor2_n10_loss_phases_s}{7.1}
\defval{cor2_n10_loss_tables_s}{1}
\defval{cor2_n10_loss_root_s}{3}
\defval{cor2_n10_loss_fix_s}{2.9}
\defval{cor2_n10_full_runs}{5}
\defval{cor2_n10_full_certified}{5}
\defval{cor2_n10_full_censored}{0}
\defval{cor2_n10_full_exact}{25688.84}
\defval{cor2_n10_full_solve_s}{7.2}
\defval{cor2_n10_full_tree_s}{0.1624}
\defval{cor2_n10_full_wall_s}{8}
\defval{cor2_n10_full_nodes}{1}
\defval{cor2_n10_full_unaccounted_s}{0.8114}
\defval{cor2_n10_full_root_gap_pct}{0.0898}
\defval{cor2_n10_full_root}{25711.90}
\defval{cor2_n10_full_root_nz}{896\,364}
\defval{cor2_n10_full_patterns}{71\,200}
\defval{cor2_n10_full_kept}{68}
\defval{cor2_n10_full_inc}{25688.84}
\defval{cor2_n10_full_bound}{25688.92}
\defval{cor2_n10_full_gap_exact_pct}{0.0003}
\defval{cor2_n10_full_refixed}{0}
\defval{cor2_n10_full_refix_t_s}{0.0033}
\defval{cor2_n10_full_maxrss_gb}{0.2095}
\defval{cor2_n10_full_phases_s}{7.2}
\defval{cor2_n10_full_tables_s}{1}
\defval{cor2_n10_full_root_s}{3}
\defval{cor2_n10_full_fix_s}{3.0}
\defval{cor2_n10_loss_nz_reduction_pct}{40.7}
\defval{cor2_n10_cut_tangent_runs}{5}
\defval{cor2_n10_cut_tangent_certified}{5}
\defval{cor2_n10_cut_tangent_tree_s}{28.8}
\defval{cor2_n10_cut_tangent_nodes}{204}
\defval{cor2_n10_cut_tangent_work}{50.7}
\defval{cor2_n10_cut_upwind_runs}{5}
\defval{cor2_n10_cut_upwind_certified}{5}
\defval{cor2_n10_cut_upwind_tree_s}{31.1}
\defval{cor2_n10_cut_upwind_nodes}{260}
\defval{cor2_n10_cut_upwind_work}{55.0}
\defval{cor2_n12_ejor_runs}{5}
\defval{cor2_n12_ejor_certified}{0}
\defval{cor2_n12_ejor_censored}{5}
\defval{cor2_n12_ejor_exact}{32793.03}
\defval{cor2_n12_ejor_gap_pct}{8.0}
\defval{cor2_n12_ejor_best}{32914.55}
\defval{cor2_n12_ejor_found_opt}{1}
\defval{cor2_n12_ejor_best_below_pct}{0}
\defval{cor2_n12_base_runs}{5}
\defval{cor2_n12_base_certified}{5}
\defval{cor2_n12_base_censored}{0}
\defval{cor2_n12_base_exact}{32914.55}
\defval{cor2_n12_base_solve_s}{68.4}
\defval{cor2_n12_base_tree_s}{12.0}
\defval{cor2_n12_base_wall_s}{71}
\defval{cor2_n12_base_nodes}{1}
\defval{cor2_n12_base_unaccounted_s}{3.0}
\defval{cor2_n12_base_root_gap_pct}{0.2845}
\defval{cor2_n12_base_root}{33008.20}
\defval{cor2_n12_base_root_nz}{11\,861\,434}
\defval{cor2_n12_base_patterns}{477\,050}
\defval{cor2_n12_base_kept}{18\,589}
\defval{cor2_n12_base_inc}{32914.55}
\defval{cor2_n12_base_bound}{32915.40}
\defval{cor2_n12_base_gap_exact_pct}{0.0026}
\defval{cor2_n12_base_maxrss_gb}{1.7}
\defval{cor2_n12_base_phases_s}{68.4}
\defval{cor2_n12_base_tables_s}{10}
\defval{cor2_n12_base_root_s}{29}
\defval{cor2_n12_base_fix_s}{17.5}
\defval{cor2_n12_loss_runs}{5}
\defval{cor2_n12_loss_certified}{5}
\defval{cor2_n12_loss_censored}{0}
\defval{cor2_n12_loss_exact}{32914.55}
\defval{cor2_n12_loss_solve_s}{55.0}
\defval{cor2_n12_loss_tree_s}{9.3}
\defval{cor2_n12_loss_wall_s}{57}
\defval{cor2_n12_loss_nodes}{1}
\defval{cor2_n12_loss_unaccounted_s}{2.2}
\defval{cor2_n12_loss_root_gap_pct}{0.2845}
\defval{cor2_n12_loss_root}{33008.20}
\defval{cor2_n12_loss_root_nz}{7\,249\,890}
\defval{cor2_n12_loss_patterns}{477\,050}
\defval{cor2_n12_loss_kept}{18\,977}
\defval{cor2_n12_loss_inc}{32914.55}
\defval{cor2_n12_loss_bound}{32917.04}
\defval{cor2_n12_loss_gap_exact_pct}{0.0075}
\defval{cor2_n12_loss_maxrss_gb}{1.3}
\defval{cor2_n12_loss_phases_s}{55.0}
\defval{cor2_n12_loss_tables_s}{10}
\defval{cor2_n12_loss_root_s}{25}
\defval{cor2_n12_loss_fix_s}{10.7}
\defval{cor2_n12_full_runs}{5}
\defval{cor2_n12_full_certified}{5}
\defval{cor2_n12_full_censored}{0}
\defval{cor2_n12_full_exact}{32914.55}
\defval{cor2_n12_full_solve_s}{46.5}
\defval{cor2_n12_full_tree_s}{0.7429}
\defval{cor2_n12_full_wall_s}{48}
\defval{cor2_n12_full_nodes}{1}
\defval{cor2_n12_full_unaccounted_s}{1.6}
\defval{cor2_n12_full_root_gap_pct}{0.2845}
\defval{cor2_n12_full_root}{33008.20}
\defval{cor2_n12_full_root_nz}{7\,249\,890}
\defval{cor2_n12_full_patterns}{477\,050}
\defval{cor2_n12_full_kept}{3\,224}
\defval{cor2_n12_full_inc}{32914.55}
\defval{cor2_n12_full_bound}{32914.68}
\defval{cor2_n12_full_gap_exact_pct}{0.0004}
\defval{cor2_n12_full_refixed}{0}
\defval{cor2_n12_full_refix_t_s}{0.0044}
\defval{cor2_n12_full_maxrss_gb}{1.3}
\defval{cor2_n12_full_phases_s}{46.5}
\defval{cor2_n12_full_tables_s}{10}
\defval{cor2_n12_full_root_s}{25}
\defval{cor2_n12_full_fix_s}{10.7}
\defval{cor2_n12_loss_nz_reduction_pct}{38.9}
\defval{cor2_n12_cut_tangent_runs}{5}
\defval{cor2_n12_cut_tangent_certified}{5}
\defval{cor2_n12_cut_tangent_tree_s}{179}
\defval{cor2_n12_cut_tangent_nodes}{152}
\defval{cor2_n12_cut_tangent_work}{405}
\defval{cor2_n12_cut_upwind_runs}{5}
\defval{cor2_n12_cut_upwind_certified}{5}
\defval{cor2_n12_cut_upwind_tree_s}{178}
\defval{cor2_n12_cut_upwind_nodes}{312}
\defval{cor2_n12_cut_upwind_work}{373}
\defval{cor2_n14_ejor_runs}{5}
\defval{cor2_n14_ejor_certified}{0}
\defval{cor2_n14_ejor_censored}{5}
\defval{cor2_n14_ejor_exact}{37867.31}
\defval{cor2_n14_ejor_gap_pct}{8.5}
\defval{cor2_n14_ejor_best}{37877.83}
\defval{cor2_n14_ejor_found_opt}{0}
\defval{cor2_n14_ejor_best_below_pct}{1.1}
\defval{cor2_n14_base_runs}{5}
\defval{cor2_n14_base_certified}{5}
\defval{cor2_n14_base_censored}{0}
\defval{cor2_n14_base_exact}{38289.79}
\defval{cor2_n14_base_solve_s}{1\,036}
\defval{cor2_n14_base_tree_s}{646}
\defval{cor2_n14_base_wall_s}{1\,045}
\defval{cor2_n14_base_nodes}{775}
\defval{cor2_n14_base_unaccounted_s}{9.1}
\defval{cor2_n14_base_root_gap_pct}{0.4335}
\defval{cor2_n14_base_root}{38455.76}
\defval{cor2_n14_base_root_nz}{86\,378\,240}
\defval{cor2_n14_base_patterns}{3\,035\,916}
\defval{cor2_n14_base_kept}{248\,359}
\defval{cor2_n14_base_inc}{38289.79}
\defval{cor2_n14_base_bound}{38290.94}
\defval{cor2_n14_base_gap_exact_pct}{0.0030}
\defval{cor2_n14_base_maxrss_gb}{12.1}
\defval{cor2_n14_base_phases_s}{1\,036}
\defval{cor2_n14_base_tables_s}{72}
\defval{cor2_n14_base_root_s}{247}
\defval{cor2_n14_base_fix_s}{70.5}
\defval{cor2_n14_loss_runs}{5}
\defval{cor2_n14_loss_certified}{5}
\defval{cor2_n14_loss_censored}{0}
\defval{cor2_n14_loss_exact}{38289.79}
\defval{cor2_n14_loss_solve_s}{996}
\defval{cor2_n14_loss_tree_s}{680}
\defval{cor2_n14_loss_wall_s}{1\,004}
\defval{cor2_n14_loss_nodes}{791}
\defval{cor2_n14_loss_unaccounted_s}{7.9}
\defval{cor2_n14_loss_root_gap_pct}{0.4335}
\defval{cor2_n14_loss_root}{38455.76}
\defval{cor2_n14_loss_root_nz}{54\,041\,568}
\defval{cor2_n14_loss_patterns}{3\,035\,916}
\defval{cor2_n14_loss_kept}{271\,906}
\defval{cor2_n14_loss_inc}{38289.79}
\defval{cor2_n14_loss_bound}{38291.82}
\defval{cor2_n14_loss_gap_exact_pct}{0.0053}
\defval{cor2_n14_loss_maxrss_gb}{8.3}
\defval{cor2_n14_loss_phases_s}{996}
\defval{cor2_n14_loss_tables_s}{72}
\defval{cor2_n14_loss_root_s}{191}
\defval{cor2_n14_loss_fix_s}{53.3}
\defval{cor2_n14_full_runs}{5}
\defval{cor2_n14_full_certified}{5}
\defval{cor2_n14_full_censored}{0}
\defval{cor2_n14_full_exact}{38289.79}
\defval{cor2_n14_full_solve_s}{391}
\defval{cor2_n14_full_tree_s}{74.7}
\defval{cor2_n14_full_wall_s}{395}
\defval{cor2_n14_full_nodes}{666}
\defval{cor2_n14_full_unaccounted_s}{3.9}
\defval{cor2_n14_full_root_gap_pct}{0.4335}
\defval{cor2_n14_full_root}{38455.76}
\defval{cor2_n14_full_root_nz}{54\,041\,568}
\defval{cor2_n14_full_patterns}{3\,035\,916}
\defval{cor2_n14_full_kept}{57\,285}
\defval{cor2_n14_full_inc}{38289.79}
\defval{cor2_n14_full_bound}{38289.94}
\defval{cor2_n14_full_gap_exact_pct}{0.0004}
\defval{cor2_n14_full_refixed}{0}
\defval{cor2_n14_full_refix_t_s}{0.0060}
\defval{cor2_n14_full_maxrss_gb}{8.3}
\defval{cor2_n14_full_phases_s}{391}
\defval{cor2_n14_full_tables_s}{72}
\defval{cor2_n14_full_root_s}{191}
\defval{cor2_n14_full_fix_s}{53.7}
\defval{cor2_n14_loss_nz_reduction_pct}{37.4}
\defval{cor2_n16_base_runs}{5}
\defval{cor2_n16_base_certified}{0}
\defval{cor2_n16_base_censored}{5}
\defval{cor2_n16_base_exact}{43444.84}
\defval{cor2_n16_base_root_gap_pct}{0.4476}
\defval{cor2_n16_base_root}{43867.85}
\defval{cor2_n16_base_root_nz}{602\,108\,614}
\defval{cor2_n16_base_patterns}{18\,798\,206}
\defval{cor2_n16_base_kept}{10\,164\,612}
\defval{cor2_n16_base_inc}{43396.22}
\defval{cor2_n16_base_bound}{43854.41}
\defval{cor2_n16_base_gap_exact_pct}{0.9737}
\defval{cor2_n16_base_maxrss_gb}{100}
\defval{cor2_n16_base_phases_s}{12\,863}
\defval{cor2_n16_base_tables_s}{90}
\defval{cor2_n16_base_root_s}{1\,629}
\defval{cor2_n16_base_fix_s}{339}
\defval{cor2_n16_full_runs}{5}
\defval{cor2_n16_full_certified}{5}
\defval{cor2_n16_full_censored}{0}
\defval{cor2_n16_full_exact}{43672.38}
\defval{cor2_n16_full_solve_s}{4\,906}
\defval{cor2_n16_full_tree_s}{3\,030}
\defval{cor2_n16_full_wall_s}{4\,923}
\defval{cor2_n16_full_nodes}{1\,028}
\defval{cor2_n16_full_unaccounted_s}{17.2}
\defval{cor2_n16_full_root_gap_pct}{0.4476}
\defval{cor2_n16_full_root}{43867.85}
\defval{cor2_n16_full_root_nz}{383\,590\,918}
\defval{cor2_n16_full_patterns}{18\,798\,206}
\defval{cor2_n16_full_kept}{1\,052\,511}
\defval{cor2_n16_full_inc}{43416.50}
\defval{cor2_n16_full_bound}{43672.56}
\defval{cor2_n16_full_gap_exact_pct}{0.0004}
\defval{cor2_n16_full_refixed}{579\,550}
\defval{cor2_n16_full_refix_t_s}{6.9}
\defval{cor2_n16_full_maxrss_gb}{69.2}
\defval{cor2_n16_full_phases_s}{4\,906}
\defval{cor2_n16_full_tables_s}{89}
\defval{cor2_n16_full_root_s}{1\,493}
\defval{cor2_n16_full_fix_s}{344}
\defval{cor2_n16_baseinc_runs}{5}
\defval{cor2_n16_baseinc_certified}{0}
\defval{cor2_n16_baseinc_censored}{5}
\defval{cor2_n16_baseinc_exact}{43416.50}
\defval{cor2_n16_baseinc_root_gap_pct}{0.4476}
\defval{cor2_n16_baseinc_root}{43867.85}
\defval{cor2_n16_baseinc_root_nz}{602\,108\,614}
\defval{cor2_n16_baseinc_patterns}{18\,798\,206}
\defval{cor2_n16_baseinc_kept}{9\,591\,433}
\defval{cor2_n16_baseinc_inc}{43416.50}
\defval{cor2_n16_baseinc_bound}{43839.75}
\defval{cor2_n16_baseinc_gap_exact_pct}{0.9749}
\defval{cor2_n16_baseinc_maxrss_gb}{83.6}
\defval{cor2_n16_baseinc_phases_s}{12\,624}
\defval{cor2_n16_baseinc_tables_s}{81}
\defval{cor2_n16_baseinc_root_s}{1\,722}
\defval{cor2_n16_baseinc_fix_s}{13.9}
\defval{cor2_gap_exact_max_pct}{0.0093}
\defval{cor2_wc_runs}{60}
\defval{cor2_wc_certified}{50}
\defval{cor2_probe_1744263_0_n}{16}
\defval{cor2_probe_1744263_0_seed}{1}
\defval{cor2_probe_1744263_0_tl}{10\,800}
\defval{cor2_probe_1744263_0_wall}{2\,503}
\defval{cor2_probe_1744263_0_patterns}{18\,798\,192}
\defval{cor2_probe_1744263_0_t_tables}{146}
\defval{cor2_probe_1744263_0_root}{43867.80}
\defval{cor2_probe_1744263_0_t_root}{1957.00}
\defval{cor2_probe_1744263_0_dive}{39287.72}
\defval{cor2_probe_1744263_0_polished}{43416.50}
\defval{cor2_probe_1744263_0_t_dive}{340.00}
\defval{cor2_probe_1744263_0_inc}{43416.50}
\defval{cor2_probe_1744263_0_kept}{10\,524\,648}
\defval{cor2_probe_1744263_0_of}{18\,798\,206}
\defval{cor2_probe_1744263_0_t_fix}{364}
\defval{cor2_probe_1744263_0_kept_overlap}{1\,052\,511}
\defval{cor2_probe_1744263_0_t_overlap}{19.9}
\defval{cor2_probe_1744263_0_maxrss_gb}{58.1}
\defval{cor2_probe_5026932_0_n}{10}
\defval{cor2_probe_5026932_0_seed}{1}
\defval{cor2_probe_5026932_0_tl}{60}
\defval{cor2_probe_5026932_0_wall}{61}
\defval{cor2_probe_5026932_0_maxrss_gb}{0.2754}
\defval{cor2_probe_5026932_1_n}{10}
\defval{cor2_probe_5026932_1_seed}{1}
\defval{cor2_probe_5026932_1_tl}{60}
\defval{cor2_probe_5026932_1_wall}{10}
\defval{cor2_probe_5026932_1_patterns}{71\,192}
\defval{cor2_probe_5026932_1_t_tables}{1}
\defval{cor2_probe_5026932_1_root}{25711.90}
\defval{cor2_probe_5026932_1_t_root}{4.00}
\defval{cor2_probe_5026932_1_dive}{25688.84}
\defval{cor2_probe_5026932_1_polished}{25688.84}
\defval{cor2_probe_5026932_1_t_dive}{3.00}
\defval{cor2_probe_5026932_1_inc}{25688.84}
\defval{cor2_probe_5026932_1_kept}{554}
\defval{cor2_probe_5026932_1_of}{71\,200}
\defval{cor2_probe_5026932_1_t_fix}{3}
\defval{cor2_probe_5026932_1_kept_overlap}{68}
\defval{cor2_probe_5026932_1_t_overlap}{0.0600}
\defval{cor2_probe_5026932_1_maxrss_gb}{0.2543}
\defval{cor2_probe_5026944_0_n}{16}
\defval{cor2_probe_5026944_0_seed}{1}
\defval{cor2_probe_5026944_0_tl}{10\,800}
\defval{cor2_probe_5026944_0_wall}{1\,018}
\defval{cor2_probe_5026944_0_maxrss_gb}{14.3}
\defval{honest_n12_uniform_alloc_cpus}{4}
\defval{honest_n12_uniform_censored}{0}
\defval{honest_n12_uniform_certified}{1}
\defval{honest_n12_uniform_cpu_h}{0.05}
\defval{honest_n12_uniform_exact}{32914.55}
\defval{honest_n12_uniform_exact_distinct}{1}
\defval{honest_n12_uniform_fix_s}{0}
\defval{honest_n12_uniform_gap_exact_pct}{0.0004}
\defval{honest_n12_uniform_gpu_held_h}{0.01}
\defval{honest_n12_uniform_heur_at_exact}{1}
\defval{honest_n12_uniform_heur_best}{32914.55}
\defval{honest_n12_uniform_heur_s}{32}
\defval{honest_n12_uniform_kept}{18\,947}
\defval{honest_n12_uniform_kept_final}{2\,970}
\defval{honest_n12_uniform_layouts_distinct}{1}
\defval{honest_n12_uniform_layouts_related_by}{}
\defval{honest_n12_uniform_maxrss_gb}{2.0}
\defval{honest_n12_uniform_nodes}{1}
\defval{honest_n12_uniform_patterns}{477\,050}
\defval{honest_n12_uniform_refixed}{0}
\defval{honest_n12_uniform_root}{33008.20}
\defval{honest_n12_uniform_root_gap_pct}{0.2845}
\defval{honest_n12_uniform_root_s}{11}
\defval{honest_n12_uniform_root_work}{15.6}
\defval{honest_n12_uniform_runs}{1}
\defval{honest_n12_uniform_starts}{3}
\defval{honest_n12_uniform_starts_at_exact}{2}
\defval{honest_n12_uniform_tables_s}{1}
\defval{honest_n12_uniform_tree_max_s}{0.42}
\defval{honest_n12_uniform_tree_min_s}{0.42}
\defval{honest_n12_uniform_tree_s}{0.42}
\defval{honest_n12_uniform_wall_max_s}{46}
\defval{honest_n12_uniform_wall_min_s}{46}
\defval{honest_n12_uniform_wall_s}{46}
\defval{honest_n12_uniform_work}{0.57}
\defval{honest_n12_uniform_work_max}{0.57}
\defval{honest_n12_uniform_work_min}{0.57}
\defval{honest_n16_p40_alloc_cpus}{4}
\defval{honest_n16_p40_censored}{0}
\defval{honest_n16_p40_certified}{5}
\defval{honest_n16_p40_cpu_h}{2.8}
\defval{honest_n16_p40_exact}{43688.27}
\defval{honest_n16_p40_exact_distinct}{1}
\defval{honest_n16_p40_fix_s}{6}
\defval{honest_n16_p40_gap_exact_pct}{0.0004}
\defval{honest_n16_p40_gpu_held_h}{0.69}
\defval{honest_n16_p40_heur_at_exact}{5}
\defval{honest_n16_p40_heur_best}{43688.27}
\defval{honest_n16_p40_heur_s}{151}
\defval{honest_n16_p40_kept}{3\,393\,556}
\defval{honest_n16_p40_kept_final}{435\,543}
\defval{honest_n16_p40_layouts_distinct}{2}
\defval{honest_n16_p40_layouts_related_by}{flip\_x, flip\_y}
\defval{honest_n16_p40_maxrss_gb}{76.4}
\defval{honest_n16_p40_nodes}{1\,064}
\defval{honest_n16_p40_patterns}{18\,798\,206}
\defval{honest_n16_p40_refixed}{0}
\defval{honest_n16_p40_root}{43927.49}
\defval{honest_n16_p40_root_gap_pct}{0.5476}
\defval{honest_n16_p40_root_s}{1\,288}
\defval{honest_n16_p40_root_work}{1\,955}
\defval{honest_n16_p40_runs}{5}
\defval{honest_n16_p40_starts}{15}
\defval{honest_n16_p40_starts_at_exact}{8}
\defval{honest_n16_p40_tables_s}{43}
\defval{honest_n16_p40_tree_max_s}{1\,133}
\defval{honest_n16_p40_tree_min_s}{895}
\defval{honest_n16_p40_tree_s}{1\,013}
\defval{honest_n16_p40_wall_max_s}{2\,658}
\defval{honest_n16_p40_wall_min_s}{2\,366}
\defval{honest_n16_p40_wall_s}{2\,492}
\defval{honest_n16_p40_work}{4\,891}
\defval{honest_n16_p40_work_max}{5\,046}
\defval{honest_n16_p40_work_min}{4\,560}
\defval{honest_n16_p60_alloc_cpus}{4}
\defval{honest_n16_p60_censored}{0}
\defval{honest_n16_p60_certified}{5}
\defval{honest_n16_p60_cpu_h}{7.2}
\defval{honest_n16_p60_exact}{43704.15}
\defval{honest_n16_p60_exact_distinct}{1}
\defval{honest_n16_p60_fix_s}{9}
\defval{honest_n16_p60_gap_exact_pct}{0.0031}
\defval{honest_n16_p60_gpu_held_h}{1.8}
\defval{honest_n16_p60_heur_at_exact}{5}
\defval{honest_n16_p60_heur_best}{43704.15}
\defval{honest_n16_p60_heur_s}{169}
\defval{honest_n16_p60_kept}{5\,417\,858}
\defval{honest_n16_p60_kept_final}{874\,224}
\defval{honest_n16_p60_layouts_distinct}{2}
\defval{honest_n16_p60_layouts_related_by}{flip\_x, flip\_y}
\defval{honest_n16_p60_maxrss_gb}{76.6}
\defval{honest_n16_p60_nodes}{2\,549}
\defval{honest_n16_p60_patterns}{18\,798\,206}
\defval{honest_n16_p60_refixed}{0}
\defval{honest_n16_p60_root}{44030.64}
\defval{honest_n16_p60_root_gap_pct}{0.7471}
\defval{honest_n16_p60_root_s}{1\,163}
\defval{honest_n16_p60_root_work}{2\,146}
\defval{honest_n16_p60_runs}{5}
\defval{honest_n16_p60_starts}{15}
\defval{honest_n16_p60_starts_at_exact}{10}
\defval{honest_n16_p60_tables_s}{52}
\defval{honest_n16_p60_tree_max_s}{6\,181}
\defval{honest_n16_p60_tree_min_s}{4\,470}
\defval{honest_n16_p60_tree_s}{5\,081}
\defval{honest_n16_p60_wall_max_s}{7\,586}
\defval{honest_n16_p60_wall_min_s}{5\,860}
\defval{honest_n16_p60_wall_s}{6\,484}
\defval{honest_n16_p60_work}{27\,857}
\defval{honest_n16_p60_work_max}{30\,923}
\defval{honest_n16_p60_work_min}{25\,613}
\defval{honest_n16_uniform_alloc_cpus}{4}
\defval{honest_n16_uniform_censored}{0}
\defval{honest_n16_uniform_certified}{5}
\defval{honest_n16_uniform_cpu_h}{1.6}
\defval{honest_n16_uniform_exact}{43672.38}
\defval{honest_n16_uniform_exact_distinct}{1}
\defval{honest_n16_uniform_fix_s}{7}
\defval{honest_n16_uniform_gap_exact_pct}{0.0004}
\defval{honest_n16_uniform_gpu_held_h}{0.39}
\defval{honest_n16_uniform_heur_at_exact}{5}
\defval{honest_n16_uniform_heur_best}{43672.38}
\defval{honest_n16_uniform_heur_s}{146}
\defval{honest_n16_uniform_kept}{3\,059\,875}
\defval{honest_n16_uniform_kept_final}{290\,489}
\defval{honest_n16_uniform_layouts_distinct}{2}
\defval{honest_n16_uniform_layouts_related_by}{flip\_x, flip\_y}
\defval{honest_n16_uniform_maxrss_gb}{59.1}
\defval{honest_n16_uniform_nodes}{742}
\defval{honest_n16_uniform_patterns}{18\,798\,206}
\defval{honest_n16_uniform_refixed}{0}
\defval{honest_n16_uniform_root}{43867.85}
\defval{honest_n16_uniform_root_gap_pct}{0.4476}
\defval{honest_n16_uniform_root_s}{858}
\defval{honest_n16_uniform_root_work}{1\,393}
\defval{honest_n16_uniform_runs}{5}
\defval{honest_n16_uniform_starts}{15}
\defval{honest_n16_uniform_starts_at_exact}{12}
\defval{honest_n16_uniform_tables_s}{54}
\defval{honest_n16_uniform_tree_max_s}{352}
\defval{honest_n16_uniform_tree_min_s}{272}
\defval{honest_n16_uniform_tree_s}{329}
\defval{honest_n16_uniform_wall_max_s}{1\,613}
\defval{honest_n16_uniform_wall_min_s}{1\,174}
\defval{honest_n16_uniform_wall_s}{1\,413}
\defval{honest_n16_uniform_work}{1\,553}
\defval{honest_n16_uniform_work_max}{1\,671}
\defval{honest_n16_uniform_work_min}{1\,426}
\defval{honest_n16_uniform_gap0_alloc_cpus}{4}
\defval{honest_n16_uniform_gap0_censored}{0}
\defval{honest_n16_uniform_gap0_certified}{5}
\defval{honest_n16_uniform_gap0_cpu_h}{1.4}
\defval{honest_n16_uniform_gap0_exact}{43672.38}
\defval{honest_n16_uniform_gap0_exact_distinct}{1}
\defval{honest_n16_uniform_gap0_fix_s}{5}
\defval{honest_n16_uniform_gap0_gap_exact_pct}{0.0004}
\defval{honest_n16_uniform_gap0_gpu_held_h}{0.36}
\defval{honest_n16_uniform_gap0_heur_at_exact}{5}
\defval{honest_n16_uniform_gap0_heur_best}{43672.38}
\defval{honest_n16_uniform_gap0_heur_s}{155}
\defval{honest_n16_uniform_gap0_kept}{3\,059\,875}
\defval{honest_n16_uniform_gap0_kept_final}{290\,489}
\defval{honest_n16_uniform_gap0_layouts_distinct}{2}
\defval{honest_n16_uniform_gap0_layouts_related_by}{flip\_x, flip\_y}
\defval{honest_n16_uniform_gap0_maxrss_gb}{59.1}
\defval{honest_n16_uniform_gap0_nodes}{742}
\defval{honest_n16_uniform_gap0_patterns}{18\,798\,206}
\defval{honest_n16_uniform_gap0_refixed}{0}
\defval{honest_n16_uniform_gap0_root}{43867.85}
\defval{honest_n16_uniform_gap0_root_gap_pct}{0.4476}
\defval{honest_n16_uniform_gap0_root_s}{753}
\defval{honest_n16_uniform_gap0_root_work}{1\,393}
\defval{honest_n16_uniform_gap0_runs}{5}
\defval{honest_n16_uniform_gap0_starts}{15}
\defval{honest_n16_uniform_gap0_starts_at_exact}{12}
\defval{honest_n16_uniform_gap0_tables_s}{44}
\defval{honest_n16_uniform_gap0_tree_max_s}{344}
\defval{honest_n16_uniform_gap0_tree_min_s}{311}
\defval{honest_n16_uniform_gap0_tree_s}{319}
\defval{honest_n16_uniform_gap0_wall_max_s}{1\,371}
\defval{honest_n16_uniform_gap0_wall_min_s}{1\,248}
\defval{honest_n16_uniform_gap0_wall_s}{1\,281}
\defval{honest_n16_uniform_gap0_work}{1\,553}
\defval{honest_n16_uniform_gap0_work_max}{1\,671}
\defval{honest_n16_uniform_gap0_work_min}{1\,426}
\defval{honest_n17_uniform_alloc_cpus}{4}
\defval{honest_n17_uniform_bound}{46399.48}
\defval{honest_n17_uniform_bound_min}{46\,398}
\defval{honest_n17_uniform_censored}{3}
\defval{honest_n17_uniform_certified}{0}
\defval{honest_n17_uniform_cpu_h}{16.2}
\defval{honest_n17_uniform_exact}{46236.01}
\defval{honest_n17_uniform_exact_distinct}{1}
\defval{honest_n17_uniform_fix_s}{27}
\defval{honest_n17_uniform_gap_min_pct}{0.3508}
\defval{honest_n17_uniform_gap_pct}{0.3536}
\defval{honest_n17_uniform_gpu_held_h}{4.1}
\defval{honest_n17_uniform_heur_at_exact}{3}
\defval{honest_n17_uniform_heur_best}{46236.01}
\defval{honest_n17_uniform_heur_s}{294}
\defval{honest_n17_uniform_kept}{26\,442\,654}
\defval{honest_n17_uniform_kept_final}{1\,292\,378}
\defval{honest_n17_uniform_layouts_distinct}{2}
\defval{honest_n17_uniform_layouts_related_by}{transpose}
\defval{honest_n17_uniform_maxrss_gb}{160}
\defval{honest_n17_uniform_patterns}{49\,524\,384}
\defval{honest_n17_uniform_refixed}{0}
\defval{honest_n17_uniform_root}{46563.42}
\defval{honest_n17_uniform_root_gap_pct}{0.7081}
\defval{honest_n17_uniform_root_s}{3\,343}
\defval{honest_n17_uniform_root_work}{4\,925}
\defval{honest_n17_uniform_runs}{3}
\defval{honest_n17_uniform_starts}{9}
\defval{honest_n17_uniform_starts_at_exact}{5}
\defval{honest_n17_uniform_tables_s}{108}
\defval{honest_ref_cor2full_host}{node26, node27}
\defval{honest_ref_cor2full_inc}{43\,416}
\defval{honest_ref_cor2full_inc_gap_pct}{0.5859}
\defval{honest_ref_cor2full_kept_final}{1\,052\,511}
\defval{honest_ref_cor2full_kept_ratio}{3.6}
\defval{honest_ref_cor2full_refixed}{579\,550}
\defval{honest_ref_cor2full_root_s}{1\,493}
\defval{honest_ref_cor2full_root_s_ratio}{1.7}
\defval{honest_ref_cor2full_root_work}{1\,446}
\defval{honest_ref_cor2full_root_work_ratio}{1.0}
\defval{honest_ref_cor2full_runs}{5}
\defval{honest_ref_cor2full_wall_s}{4\,923}
\defval{honest_ref_cor2full_work}{10\,345}
\defval{honest_ref_cor2full_work_max}{20\,257}
\defval{honest_ref_cor2full_work_min}{9\,490}
\defval{honest_ref_cor2full_work_ratio}{6.7}
\defval{tiles14_v3h3_design}{V3+H3}
\defval{tiles14_v3h3_root}{38584.56}
\defval{tiles14_v3h3_root_gap_pct}{0.77}
\defval{tiles14_v3h3_patterns}{264\,120}
\defval{tiles14_v3h3_patterns_ratio}{0.09}
\defval{tiles14_v3h3_regions}{24}
\defval{tiles14_v3h3_kept}{169\,604}
\defval{tiles14_v3h3_kept_pct}{64.2}
\defval{tiles14_v3h3_kept_final}{68\,493}
\defval{tiles14_v3h3_kept_final_ratio}{1.2}
\defval{tiles14_v3h3_tables_s}{1}
\defval{tiles14_v3h3_root_s}{6}
\defval{tiles14_v3h3_fix_s}{10}
\defval{tiles14_v3h3_overlap_s}{0.33}
\defval{tiles14_v3h3_maxrss_gb}{2.2}
\defval{tiles14_v3h3_nodes}{6\,447}
\defval{tiles14_v3h3_tree_s}{1\,103}
\defval{tiles14_v3h3_work}{2\,692}
\defval{tiles14_v3h3_exact}{38289.79}
\defval{tiles14_v3h3_bound}{38289.90}
\defval{tiles14_v3h3_status}{2}
\defval{tiles14_v3h3_certified}{True}
\defval{tiles14_v3h3_censored}{False}
\defval{tiles14_v3h3_refixed}{5\,311}
\defval{tiles14_v3h3_wall_s}{1\,124}
\defval{tiles14_v3h3_host}{node27}
\defval{tiles14_v3h3_cpu}{INTEL(R) XEON(R) PLATINUM 8592+}
\defval{tiles14_v3h3_complete}{True}
\defval{tiles14_v4s2h4s2_design}{V4s2+H4s2}
\defval{tiles14_v4s2h4s2_root}{38455.76}
\defval{tiles14_v4s2h4s2_root_gap_pct}{0.43}
\defval{tiles14_v4s2h4s2_patterns}{3\,035\,916}
\defval{tiles14_v4s2h4s2_patterns_ratio}{1.0}
\defval{tiles14_v4s2h4s2_regions}{12}
\defval{tiles14_v4s2h4s2_kept}{276\,449}
\defval{tiles14_v4s2h4s2_kept_pct}{9.1}
\defval{tiles14_v4s2h4s2_kept_final}{58\,026}
\defval{tiles14_v4s2h4s2_kept_final_ratio}{1.0}
\defval{tiles14_v4s2h4s2_tables_s}{12}
\defval{tiles14_v4s2h4s2_root_s}{129}
\defval{tiles14_v4s2h4s2_fix_s}{49}
\defval{tiles14_v4s2h4s2_overlap_s}{0.29}
\defval{tiles14_v4s2h4s2_maxrss_gb}{8.7}
\defval{tiles14_v4s2h4s2_nodes}{738}
\defval{tiles14_v4s2h4s2_tree_s}{55.5}
\defval{tiles14_v4s2h4s2_work}{119}
\defval{tiles14_v4s2h4s2_exact}{38289.79}
\defval{tiles14_v4s2h4s2_bound}{38289.90}
\defval{tiles14_v4s2h4s2_status}{2}
\defval{tiles14_v4s2h4s2_certified}{True}
\defval{tiles14_v4s2h4s2_censored}{False}
\defval{tiles14_v4s2h4s2_refixed}{0}
\defval{tiles14_v4s2h4s2_wall_s}{251}
\defval{tiles14_v4s2h4s2_host}{node27}
\defval{tiles14_v4s2h4s2_cpu}{INTEL(R) XEON(R) PLATINUM 8592+}
\defval{tiles14_v4s2h4s2_complete}{True}
\defval{tiles14_v5s3h5s3_design}{V5s3+H5s3}
\defval{tiles14_v5s3h5s3_root}{38427.80}
\defval{tiles14_v5s3h5s3_root_gap_pct}{0.36}
\defval{tiles14_v5s3h5s3_patterns}{29\,150\,888}
\defval{tiles14_v5s3h5s3_patterns_ratio}{9.6}
\defval{tiles14_v5s3h5s3_regions}{8}
\defval{tiles14_v5s3h5s3_kept}{604\,985}
\defval{tiles14_v5s3h5s3_kept_pct}{2.1}
\defval{tiles14_v5s3h5s3_kept_final}{125\,035}
\defval{tiles14_v5s3h5s3_kept_final_ratio}{2.2}
\defval{tiles14_v5s3h5s3_tables_s}{134}
\defval{tiles14_v5s3h5s3_root_s}{2\,026}
\defval{tiles14_v5s3h5s3_fix_s}{376}
\defval{tiles14_v5s3h5s3_overlap_s}{0.73}
\defval{tiles14_v5s3h5s3_maxrss_gb}{95.8}
\defval{tiles14_v5s3h5s3_nodes}{249}
\defval{tiles14_v5s3h5s3_tree_s}{125}
\defval{tiles14_v5s3h5s3_work}{286}
\defval{tiles14_v5s3h5s3_exact}{38289.79}
\defval{tiles14_v5s3h5s3_bound}{38289.90}
\defval{tiles14_v5s3h5s3_status}{2}
\defval{tiles14_v5s3h5s3_certified}{True}
\defval{tiles14_v5s3h5s3_censored}{False}
\defval{tiles14_v5s3h5s3_refixed}{28\,656}
\defval{tiles14_v5s3h5s3_wall_s}{2\,671}
\defval{tiles14_v5s3h5s3_host}{node27}
\defval{tiles14_v5s3h5s3_cpu}{INTEL(R) XEON(R) PLATINUM 8592+}
\defval{tiles14_v5s3h5s3_complete}{True}
\defval{tiles14inc_v3h3_design}{V3+H3}
\defval{tiles14inc_v3h3_runs}{5}
\defval{tiles14inc_v3h3_certified}{5}
\defval{tiles14inc_v3h3_censored}{0}
\defval{tiles14inc_v3h3_seeds}{1, 2, 3, 4, 5}
\defval{tiles14inc_v3h3_host}{node27, node28, node30}
\defval{tiles14inc_v3h3_inc}{38289.79}
\defval{tiles14inc_v3h3_inc_src}{reference}
\defval{tiles14inc_v3h3_root}{38584.56}
\defval{tiles14inc_v3h3_root_gap_pct}{0.77}
\defval{tiles14inc_v3h3_patterns}{264\,120}
\defval{tiles14inc_v3h3_patterns_ratio}{0.09}
\defval{tiles14inc_v3h3_regions}{24}
\defval{tiles14inc_v3h3_kept}{155\,731}
\defval{tiles14inc_v3h3_kept_pct}{59.0}
\defval{tiles14inc_v3h3_kept_final}{57\,667}
\defval{tiles14inc_v3h3_kept_final_ratio}{0.99}
\defval{tiles14inc_v3h3_tables_s}{1}
\defval{tiles14inc_v3h3_root_s}{8}
\defval{tiles14inc_v3h3_fix_s}{0}
\defval{tiles14inc_v3h3_overlap_s}{0.34}
\defval{tiles14inc_v3h3_nodes}{5\,202}
\defval{tiles14inc_v3h3_tree_s}{738}
\defval{tiles14inc_v3h3_work}{1\,582}
\defval{tiles14inc_v3h3_wall_s}{751}
\defval{tiles14inc_v3h3_refixed}{0}
\defval{tiles14inc_v3h3_exact}{38289.79}
\defval{tiles14inc_v3h3_maxrss_gb}{1.9}
\defval{tiles14inc_v3h3_nodes_min}{4\,850}
\defval{tiles14inc_v3h3_nodes_max}{5\,509}
\defval{tiles14inc_v3h3_tree_s_min}{618}
\defval{tiles14inc_v3h3_tree_s_max}{963}
\defval{tiles14inc_v3h3_wall_s_min}{633}
\defval{tiles14inc_v3h3_wall_s_max}{980}
\defval{tiles14inc_v3h3_work_min}{1\,503}
\defval{tiles14inc_v3h3_work_max}{1\,781}
\defval{tiles14inc_v4s2h4s2_design}{V4s2+H4s2}
\defval{tiles14inc_v4s2h4s2_runs}{5}
\defval{tiles14inc_v4s2h4s2_certified}{5}
\defval{tiles14inc_v4s2h4s2_censored}{0}
\defval{tiles14inc_v4s2h4s2_seeds}{1, 2, 3, 4, 5}
\defval{tiles14inc_v4s2h4s2_host}{node26}
\defval{tiles14inc_v4s2h4s2_inc}{38289.79}
\defval{tiles14inc_v4s2h4s2_inc_src}{reference}
\defval{tiles14inc_v4s2h4s2_root}{38455.76}
\defval{tiles14inc_v4s2h4s2_root_gap_pct}{0.43}
\defval{tiles14inc_v4s2h4s2_patterns}{3\,035\,916}
\defval{tiles14inc_v4s2h4s2_patterns_ratio}{1.0}
\defval{tiles14inc_v4s2h4s2_regions}{12}
\defval{tiles14inc_v4s2h4s2_kept}{276\,449}
\defval{tiles14inc_v4s2h4s2_kept_pct}{9.1}
\defval{tiles14inc_v4s2h4s2_kept_final}{58\,026}
\defval{tiles14inc_v4s2h4s2_kept_final_ratio}{1.0}
\defval{tiles14inc_v4s2h4s2_tables_s}{11}
\defval{tiles14inc_v4s2h4s2_root_s}{122}
\defval{tiles14inc_v4s2h4s2_fix_s}{1}
\defval{tiles14inc_v4s2h4s2_overlap_s}{0.28}
\defval{tiles14inc_v4s2h4s2_nodes}{662}
\defval{tiles14inc_v4s2h4s2_tree_s}{45.4}
\defval{tiles14inc_v4s2h4s2_work}{134}
\defval{tiles14inc_v4s2h4s2_wall_s}{187}
\defval{tiles14inc_v4s2h4s2_refixed}{0}
\defval{tiles14inc_v4s2h4s2_exact}{38289.79}
\defval{tiles14inc_v4s2h4s2_maxrss_gb}{9.0}
\defval{tiles14inc_v4s2h4s2_nodes_min}{658}
\defval{tiles14inc_v4s2h4s2_nodes_max}{735}
\defval{tiles14inc_v4s2h4s2_tree_s_min}{44.3}
\defval{tiles14inc_v4s2h4s2_tree_s_max}{51.6}
\defval{tiles14inc_v4s2h4s2_wall_s_min}{180}
\defval{tiles14inc_v4s2h4s2_wall_s_max}{203}
\defval{tiles14inc_v4s2h4s2_work_min}{127}
\defval{tiles14inc_v4s2h4s2_work_max}{161}
\defval{tiles14inc_v5s3h5s3_design}{V5s3+H5s3}
\defval{tiles14inc_v5s3h5s3_runs}{5}
\defval{tiles14inc_v5s3h5s3_certified}{5}
\defval{tiles14inc_v5s3h5s3_censored}{0}
\defval{tiles14inc_v5s3h5s3_seeds}{1, 2, 3, 4, 5}
\defval{tiles14inc_v5s3h5s3_host}{node26, node27}
\defval{tiles14inc_v5s3h5s3_inc}{38289.79}
\defval{tiles14inc_v5s3h5s3_inc_src}{reference}
\defval{tiles14inc_v5s3h5s3_root}{38427.80}
\defval{tiles14inc_v5s3h5s3_root_gap_pct}{0.36}
\defval{tiles14inc_v5s3h5s3_patterns}{29\,150\,888}
\defval{tiles14inc_v5s3h5s3_patterns_ratio}{9.6}
\defval{tiles14inc_v5s3h5s3_regions}{8}
\defval{tiles14inc_v5s3h5s3_kept}{458\,177}
\defval{tiles14inc_v5s3h5s3_kept_pct}{1.6}
\defval{tiles14inc_v5s3h5s3_kept_final}{94\,248}
\defval{tiles14inc_v5s3h5s3_kept_final_ratio}{1.6}
\defval{tiles14inc_v5s3h5s3_tables_s}{131}
\defval{tiles14inc_v5s3h5s3_root_s}{2\,084}
\defval{tiles14inc_v5s3h5s3_fix_s}{9}
\defval{tiles14inc_v5s3h5s3_overlap_s}{0.41}
\defval{tiles14inc_v5s3h5s3_nodes}{227}
\defval{tiles14inc_v5s3h5s3_tree_s}{46.8}
\defval{tiles14inc_v5s3h5s3_work}{139}
\defval{tiles14inc_v5s3h5s3_wall_s}{2\,281}
\defval{tiles14inc_v5s3h5s3_refixed}{0}
\defval{tiles14inc_v5s3h5s3_exact}{38289.79}
\defval{tiles14inc_v5s3h5s3_maxrss_gb}{95.8}
\defval{tiles14inc_v5s3h5s3_nodes_min}{214}
\defval{tiles14inc_v5s3h5s3_nodes_max}{230}
\defval{tiles14inc_v5s3h5s3_tree_s_min}{44.5}
\defval{tiles14inc_v5s3h5s3_tree_s_max}{53.1}
\defval{tiles14inc_v5s3h5s3_wall_s_min}{2\,216}
\defval{tiles14inc_v5s3h5s3_wall_s_max}{2\,334}
\defval{tiles14inc_v5s3h5s3_work_min}{122}
\defval{tiles14inc_v5s3h5s3_work_max}{148}
\defval{tiles16inc_v3h3_design}{V3+H3}
\defval{tiles16inc_v3h3_runs}{3}
\defval{tiles16inc_v3h3_certified}{1}
\defval{tiles16inc_v3h3_censored}{2}
\defval{tiles16inc_v3h3_seeds}{1, 2, 3}
\defval{tiles16inc_v3h3_host}{node24, node28}
\defval{tiles16inc_v3h3_inc}{43672.38}
\defval{tiles16inc_v3h3_inc_src}{reference}
\defval{tiles16inc_v3h3_root}{44044.58}
\defval{tiles16inc_v3h3_root_gap_pct}{0.85}
\defval{tiles16inc_v3h3_patterns}{1\,068\,340}
\defval{tiles16inc_v3h3_patterns_ratio}{0.06}
\defval{tiles16inc_v3h3_regions}{28}
\defval{tiles16inc_v3h3_kept}{656\,919}
\defval{tiles16inc_v3h3_kept_pct}{61.5}
\defval{tiles16inc_v3h3_kept_final}{190\,514}
\defval{tiles16inc_v3h3_kept_final_ratio}{0.18}
\defval{tiles16inc_v3h3_tables_s}{5}
\defval{tiles16inc_v3h3_root_s}{39}
\defval{tiles16inc_v3h3_fix_s}{1}
\defval{tiles16inc_v3h3_overlap_s}{0.88}
\defval{tiles16inc_v3h3_nodes}{10\,408}
\defval{tiles16inc_v3h3_tree_s}{10\,800}
\defval{tiles16inc_v3h3_work}{35\,633}
\defval{tiles16inc_v3h3_wall_s}{10\,851}
\defval{tiles16inc_v3h3_refixed}{0}
\defval{tiles16inc_v3h3_exact}{43672.38}
\defval{tiles16inc_v3h3_maxrss_gb}{6.2}
\defval{tiles16inc_v3h3_nodes_min}{3\,620}
\defval{tiles16inc_v3h3_nodes_max}{13\,772}
\defval{tiles16inc_v3h3_tree_s_min}{10\,145}
\defval{tiles16inc_v3h3_tree_s_max}{10\,800}
\defval{tiles16inc_v3h3_wall_s_min}{10\,197}
\defval{tiles16inc_v3h3_wall_s_max}{10\,861}
\defval{tiles16inc_v3h3_work_min}{23\,877}
\defval{tiles16inc_v3h3_work_max}{38\,778}
\defval{tiles16inc_v3h3_bound}{43795.00}
\defval{tiles16inc_v3h3_bound_min}{43744.90}
\defval{tiles16inc_v3h3_gap_pct}{0.28}
\defval{tiles16inc_v3h3_gap_min_pct}{0.17}
\defval{tiles17inc_v4s2h4s2_design}{V4s2+H4s2}
\defval{tiles17inc_v4s2h4s2_runs}{3}
\defval{tiles17inc_v4s2h4s2_certified}{0}
\defval{tiles17inc_v4s2h4s2_censored}{0}
\defval{tiles17inc_v4s2h4s2_seeds}{1, 2, 3}
\defval{tiles17inc_v4s2h4s2_host}{node24, node29}
\defval{tiles17inc_v4s2h4s2_inc}{46236.01}
\defval{tiles17inc_v4s2h4s2_inc_src}{reference}
\defval{tiles17inc_v4s2h4s2_root}{46563.40}
\defval{tiles17inc_v4s2h4s2_patterns}{49\,524\,384}
\defval{tiles17inc_v4s2h4s2_patterns_ratio}{1.0}
\defval{tiles17inc_v4s2h4s2_regions}{16}
\defval{tiles17inc_v4s2h4s2_kept}{26\,442\,654}
\defval{tiles17inc_v4s2h4s2_kept_pct}{53.4}
\defval{tiles17inc_v4s2h4s2_kept_final}{1\,292\,378}
\defval{tiles17inc_v4s2h4s2_kept_final_ratio}{1.0}
\defval{tiles17inc_v4s2h4s2_tables_s}{372}
\defval{tiles17inc_v4s2h4s2_root_s}{8\,478}
\defval{tiles17inc_v4s2h4s2_fix_s}{67}
\defval{tiles17inc_v4s2h4s2_overlap_s}{51.4}
\defval{tiles17inc_v4s2h4s2_nodes}{3\,340}
\defval{tiles17inc_v4s2h4s2_tree_s}{41\,038}
\defval{tiles17inc_v4s2h4s2_maxrss_gb}{160}
\defval{tiles17inc_v4s2h4s2_nodes_min}{2\,250}
\defval{tiles17inc_v4s2h4s2_nodes_max}{3\,417}
\defval{tiles17inc_v4s2h4s2_tree_s_min}{39\,676}
\defval{tiles17inc_v4s2h4s2_tree_s_max}{42\,765}
\defval{tiles17inc_v4s2h4s2_root_gap_inc_pct}{0.71}
\defval{tiles17inc_v4s2h4s2_bound}{46411.49}
\defval{tiles17inc_v4s2h4s2_bound_min}{46406.47}
\defval{tiles17inc_v4s2h4s2_gap_pct}{0.38}
\defval{tiles17inc_v4s2h4s2_gap_min_pct}{0.37}
\defval{tiles17inc_v4s2h4s2_killed}{3}
\defval{tiles17inc_v4s2h4s2_gap_max_pct}{0.38}
\defval{tiles17inc_v4s2h4s2_open_nodes_min}{822}
\defval{tiles17inc_v4s2h4s2_open_nodes_max}{1\,008}
\defval{rose_runs}{78}
\defval{rose_solved}{78}
\defval{rose_roses}{6}
\defval{rose_grids}{3}
\defval{rose_d4_images}{17}
\defval{rose_rows}{18}
\defval{rose_not_d4_rose}{10x10 p45-5-45-5}
\defval{rose_nu_list}{0.35/0.15/0.35/0.15, 0.4/0.2/0.2/0.2, 0.4/0.3/0.2/0.1, 0.45/0.05/0.45/0.05, 0.6/0.25/0.1/0.05, 0.7/0.1/0.1/0.1}
\defval{rose_n10_uniform_nodes}{1}
\defval{rose_n10_uniform_tree_s}{0.20}
\defval{rose_n10_uniform_job_s}{8.2}
\defval{rose_n10_uniform_optimum}{25688.84}
\defval{rose_n10_uniform_turbines}{11}
\defval{rose_n10_uniform_root_gap_pct}{0.09}
\defval{rose_n10_nu_runs}{30}
\defval{rose_n10_nu_solved}{30}
\defval{rose_n10_nu_nodes_min}{1}
\defval{rose_n10_nu_nodes_max}{47}
\defval{rose_n10_nu_tree_min_s}{0.00}
\defval{rose_n10_nu_tree_max_s}{7.8}
\defval{rose_n10_nu_job_min_s}{7.0}
\defval{rose_n10_nu_job_max_s}{20.8}
\defval{rose_n10_nu_root_gap_min_pct}{0.09}
\defval{rose_n10_nu_root_gap_max_pct}{0.89}
\defval{rose_n10_nu_d4_images}{5}
\defval{rose_n10_nu_roses}{6}
\defval{rose_n10_axial_turbines}{12}
\defval{rose_n10_axial_nodes}{47}
\defval{rose_n12_uniform_nodes}{1}
\defval{rose_n12_uniform_tree_s}{12.1}
\defval{rose_n12_uniform_job_s}{69.1}
\defval{rose_n12_uniform_optimum}{32914.55}
\defval{rose_n12_uniform_turbines}{14}
\defval{rose_n12_uniform_root_gap_pct}{0.28}
\defval{rose_n12_nu_runs}{30}
\defval{rose_n12_nu_solved}{30}
\defval{rose_n12_nu_nodes_min}{1}
\defval{rose_n12_nu_nodes_max}{1}
\defval{rose_n12_nu_tree_min_s}{0.30}
\defval{rose_n12_nu_tree_max_s}{0.70}
\defval{rose_n12_nu_job_min_s}{58.5}
\defval{rose_n12_nu_job_max_s}{129}
\defval{rose_n12_nu_root_gap_min_pct}{0.14}
\defval{rose_n12_nu_root_gap_max_pct}{0.17}
\defval{rose_n12_nu_d4_images}{6}
\defval{rose_n12_nu_roses}{6}
\defval{rose_n12_axial_turbines}{14}
\defval{rose_n12_axial_nodes}{1}
\defval{rose_n14_uniform_nodes}{833}
\defval{rose_n14_uniform_tree_s}{646}
\defval{rose_n14_uniform_job_s}{1\,035}
\defval{rose_n14_uniform_optimum}{38289.79}
\defval{rose_n14_uniform_turbines}{16}
\defval{rose_n14_uniform_root_gap_pct}{0.43}
\defval{rose_n14_nu_runs}{18}
\defval{rose_n14_nu_solved}{18}
\defval{rose_n14_nu_nodes_min}{31}
\defval{rose_n14_nu_nodes_max}{194}
\defval{rose_n14_nu_tree_min_s}{88.4}
\defval{rose_n14_nu_tree_max_s}{401}
\defval{rose_n14_nu_job_min_s}{606}
\defval{rose_n14_nu_job_max_s}{1\,115}
\defval{rose_n14_nu_root_gap_min_pct}{0.19}
\defval{rose_n14_nu_root_gap_max_pct}{0.39}
\defval{rose_n14_nu_d4_images}{6}
\defval{rose_n14_nu_roses}{6}
\defval{rose_n14_axial_turbines}{16}
\defval{rose_n14_axial_nodes}{31}
\defval{cutin_layouts}{153}
\defval{cutin_grids}{10, 12, 14, 16, 17}
\defval{cutin_cell_dirs}{9\,212}
\defval{cutin_min_speed_ms}{3.91}
\defval{cutin_cells_below}{0}
\defval{cutin_vcutin_ms}{3}
\defval{cutin_axial_min_speed_ms}{3.91}
\defval{cutin_d12_min_speed_ms}{3.91}
\defval{cutin_d12_layouts}{8}
\defval{cutin_n10_min_speed_ms}{3.91}
\defval{cutin_n12_min_speed_ms}{3.91}
\defval{cutin_n14_min_speed_ms}{4.67}
\defval{cutin_n16_min_speed_ms}{7.35}
\defval{cutin_n17_min_speed_ms}{9.10}
\defval{shift_records}{246}
\defval{shift_certified}{221}
\defval{shift_start_runs}{36}
\defval{shift_tree_runs}{185}
\defval{shift_start_obj_minus_exact_max_kw}{7.3e-12}
\defval{shift_eps}{10^{-6}}
\defval{shift_ref_vs_stored_max_kw}{3.6e-12}
\defval{shift_unshifted_vs_ref_max_kw}{9.4e-10}
\defval{shift_resid_min_pct}{0.0002}
\defval{shift_resid_max_pct}{0.0004}
\defval{shift_resid_max_pct_up}{0.00042}
\defval{shift_shift_min_pct}{0.0002}
\defval{shift_shift_max_pct}{0.0004}
\defval{shift_obj_minus_shift_max_kw}{1.1e-02}
\defval{shift_obj_minus_shift_max_pct}{4.5e-05}
\defval{shift_n10_d12_runs}{5}
\defval{shift_n10_d12_tree_runs}{5}
\defval{shift_n10_d12_resid_max_pct}{0.0003}
\defval{shift_n10_d12_shift_max_pct}{0.0004}
\defval{shift_n10_d12_obj_minus_shift_max_kw}{5.6e-03}
\defval{shift_n10_p40302010_runs}{18}
\defval{shift_n10_p40302010_tree_runs}{9}
\defval{shift_n10_p40302010_resid_max_pct}{0.0002}
\defval{shift_n10_p40302010_shift_max_pct}{0.0004}
\defval{shift_n10_p40302010_obj_minus_shift_max_kw}{3.0e-03}
\defval{shift_n10_uniform_runs}{43}
\defval{shift_n10_uniform_tree_runs}{34}
\defval{shift_n10_uniform_resid_max_pct}{0.0003}
\defval{shift_n10_uniform_shift_max_pct}{0.0003}
\defval{shift_n10_uniform_obj_minus_shift_max_kw}{1.1e-02}
\defval{shift_n12_p40302010_runs}{18}
\defval{shift_n12_p40302010_tree_runs}{9}
\defval{shift_n12_p40302010_resid_max_pct}{0.0002}
\defval{shift_n12_p40302010_shift_max_pct}{0.0004}
\defval{shift_n12_p40302010_obj_minus_shift_max_kw}{1.0e-02}
\defval{shift_n12_uniform_runs}{44}
\defval{shift_n12_uniform_tree_runs}{35}
\defval{shift_n12_uniform_resid_max_pct}{0.0004}
\defval{shift_n12_uniform_shift_max_pct}{0.0004}
\defval{shift_n12_uniform_obj_minus_shift_max_kw}{6.0e-03}
\defval{shift_n14_p40302010_runs}{16}
\defval{shift_n14_p40302010_tree_runs}{16}
\defval{shift_n14_p40302010_resid_max_pct}{0.0004}
\defval{shift_n14_p40302010_shift_max_pct}{0.0004}
\defval{shift_n14_p40302010_obj_minus_shift_max_kw}{4.9e-03}
\defval{shift_n14_uniform_runs}{51}
\defval{shift_n14_uniform_tree_runs}{51}
\defval{shift_n14_uniform_resid_max_pct}{0.0004}
\defval{shift_n14_uniform_shift_max_pct}{0.0004}
\defval{shift_n14_uniform_obj_minus_shift_max_kw}{5.0e-03}
\defval{shift_n16_p40302010_runs}{5}
\defval{shift_n16_p40302010_tree_runs}{5}
\defval{shift_n16_p40302010_resid_max_pct}{0.0004}
\defval{shift_n16_p40302010_shift_max_pct}{0.0004}
\defval{shift_n16_p40302010_obj_minus_shift_max_kw}{3.4e-03}
\defval{shift_n16_p6025105_runs}{5}
\defval{shift_n16_p6025105_tree_runs}{5}
\defval{shift_n16_p6025105_resid_max_pct}{0.0004}
\defval{shift_n16_p6025105_shift_max_pct}{0.0004}
\defval{shift_n16_p6025105_obj_minus_shift_max_kw}{3.9e-03}
\defval{shift_n16_uniform_runs}{16}
\defval{shift_n16_uniform_tree_runs}{16}
\defval{shift_n16_uniform_resid_max_pct}{0.0004}
\defval{shift_n16_uniform_shift_max_pct}{0.0004}
\defval{shift_n16_uniform_obj_minus_shift_max_kw}{3.7e-03}
\defval{panel_instances}{36}
\defval{panel_seeds}{3}
\defval{panel_tl}{3\,600}
\defval{panel_grids}{10$\times$10, 12$\times$12, 14$\times$14}
\defval{panel_masks}{9}
\defval{panel_instances_certified}{36}
\defval{panel_pairs}{108}
\defval{panel_both_certified}{38}
\defval{panel_pairs_same_layout}{36}
\defval{panel_pairs_other_layout}{2}
\defval{panel_full_only}{68}
\defval{panel_ejor_only}{0}
\defval{panel_neither}{2}
\defval{panel_agree_max_pct}{0}
\defval{panel_forbidden_hits}{0}
\defval{panel_over_budget}{0}
\defval{panel_forbidden_min}{10}
\defval{panel_forbidden_max}{34}
\defval{panel_full_runs}{108}
\defval{panel_full_certified}{106}
\defval{panel_full_censored}{2}
\defval{panel_full_incomplete}{0}
\defval{panel_full_oom}{0}
\defval{panel_full_instances}{36}
\defval{panel_full_host}{razor}
\defval{panel_full_cpu}{Intel(R) Xeon(R) CPU E3-1220 V2 @ 3.10GHz}
\defval{panel_full_root_gap_pct}{0.338}
\defval{panel_full_patterns}{226\,633}
\defval{panel_full_kept}{23\,240}
\defval{panel_full_kept_pct}{12.80}
\defval{panel_full_kept_final}{6\,310}
\defval{panel_full_kept_final_pct}{3.16}
\defval{panel_full_tables_s}{4}
\defval{panel_full_root_s}{15}
\defval{panel_full_nodes}{21.5}
\defval{panel_full_tree_s}{7.2}
\defval{panel_full_work}{7.6}
\defval{panel_full_wall_s}{40.5}
\defval{panel_full_refixed}{0}
\defval{panel_full_gap_pct}{0.0012}
\defval{panel_full_below_best_pct}{0.0000}
\defval{panel_full_turbines}{11.5}
\defval{panel_full_n_forbidden}{22}
\defval{panel_full_maxrss_gb}{5.6}
\defval{panel_full_gap_pct_max}{0.443}
\defval{panel_full_cert_gap_max_pct}{0.011}
\defval{panel_full_censored_gap_max_pct}{0.443}
\defval{panel_full_below_best_max_pct}{0.187}
\defval{panel_full_nodes_min}{1}
\defval{panel_full_nodes_max}{18\,552}
\defval{panel_full_tree_s_min}{0.02}
\defval{panel_full_tree_s_max}{3\,600}
\defval{panel_full_wall_s_min}{3}
\defval{panel_full_wall_s_max}{3\,841}
\defval{panel_full_root_gap_pct_min}{0.089}
\defval{panel_full_root_gap_pct_max}{1.06}
\defval{panel_full_n10_runs}{36}
\defval{panel_full_n10_certified}{36}
\defval{panel_full_n10_censored}{0}
\defval{panel_full_n10_incomplete}{0}
\defval{panel_full_n10_oom}{0}
\defval{panel_full_n10_instances}{12}
\defval{panel_full_n10_host}{razor}
\defval{panel_full_n10_cpu}{Intel(R) Xeon(R) CPU E3-1220 V2 @ 3.10GHz}
\defval{panel_full_n10_root_gap_pct}{0.202}
\defval{panel_full_n10_patterns}{34\,272}
\defval{panel_full_n10_kept}{2\,424}
\defval{panel_full_n10_kept_pct}{12.71}
\defval{panel_full_n10_kept_final}{511}
\defval{panel_full_n10_kept_final_pct}{2.55}
\defval{panel_full_n10_tables_s}{1}
\defval{panel_full_n10_root_s}{2}
\defval{panel_full_n10_nodes}{1}
\defval{panel_full_n10_tree_s}{0.27}
\defval{panel_full_n10_work}{0.09}
\defval{panel_full_n10_wall_s}{4}
\defval{panel_full_n10_refixed}{0}
\defval{panel_full_n10_gap_pct}{0.0061}
\defval{panel_full_n10_below_best_pct}{0.0000}
\defval{panel_full_n10_turbines}{9}
\defval{panel_full_n10_n_forbidden}{15}
\defval{panel_full_n10_maxrss_gb}{0.20}
\defval{panel_full_n10_gap_pct_max}{0.0099}
\defval{panel_full_n10_cert_gap_max_pct}{0.0099}
\defval{panel_full_n10_below_best_max_pct}{0.0000}
\defval{panel_full_n10_nodes_min}{1}
\defval{panel_full_n10_nodes_max}{63}
\defval{panel_full_n10_tree_s_min}{0.04}
\defval{panel_full_n10_tree_s_max}{8.5}
\defval{panel_full_n10_wall_s_min}{3}
\defval{panel_full_n10_wall_s_max}{14}
\defval{panel_full_n10_root_gap_pct_min}{0.089}
\defval{panel_full_n10_root_gap_pct_max}{1.06}
\defval{panel_full_n12_runs}{36}
\defval{panel_full_n12_certified}{36}
\defval{panel_full_n12_censored}{0}
\defval{panel_full_n12_incomplete}{0}
\defval{panel_full_n12_oom}{0}
\defval{panel_full_n12_instances}{12}
\defval{panel_full_n12_host}{razor}
\defval{panel_full_n12_cpu}{Intel(R) Xeon(R) CPU E3-1220 V2 @ 3.10GHz}
\defval{panel_full_n12_root_gap_pct}{0.342}
\defval{panel_full_n12_patterns}{226\,633}
\defval{panel_full_n12_kept}{28\,050}
\defval{panel_full_n12_kept_pct}{19.76}
\defval{panel_full_n12_kept_final}{7\,122}
\defval{panel_full_n12_kept_final_pct}{4.29}
\defval{panel_full_n12_tables_s}{4}
\defval{panel_full_n12_root_s}{15}
\defval{panel_full_n12_nodes}{285}
\defval{panel_full_n12_tree_s}{8.8}
\defval{panel_full_n12_work}{12.5}
\defval{panel_full_n12_wall_s}{40.5}
\defval{panel_full_n12_refixed}{156}
\defval{panel_full_n12_gap_pct}{0.0008}
\defval{panel_full_n12_below_best_pct}{0.0000}
\defval{panel_full_n12_turbines}{12}
\defval{panel_full_n12_n_forbidden}{22}
\defval{panel_full_n12_maxrss_gb}{0.98}
\defval{panel_full_n12_gap_pct_max}{0.0094}
\defval{panel_full_n12_cert_gap_max_pct}{0.0094}
\defval{panel_full_n12_below_best_max_pct}{0.0000}
\defval{panel_full_n12_nodes_min}{1}
\defval{panel_full_n12_nodes_max}{2\,620}
\defval{panel_full_n12_tree_s_min}{0.02}
\defval{panel_full_n12_tree_s_max}{87.6}
\defval{panel_full_n12_wall_s_min}{12}
\defval{panel_full_n12_wall_s_max}{123}
\defval{panel_full_n12_root_gap_pct_min}{0.101}
\defval{panel_full_n12_root_gap_pct_max}{0.720}
\defval{panel_full_n14_runs}{36}
\defval{panel_full_n14_certified}{34}
\defval{panel_full_n14_censored}{2}
\defval{panel_full_n14_incomplete}{0}
\defval{panel_full_n14_oom}{0}
\defval{panel_full_n14_instances}{12}
\defval{panel_full_n14_host}{razor}
\defval{panel_full_n14_cpu}{Intel(R) Xeon(R) CPU E3-1220 V2 @ 3.10GHz}
\defval{panel_full_n14_root_gap_pct}{0.580}
\defval{panel_full_n14_patterns}{1\,327\,896}
\defval{panel_full_n14_kept}{250\,524}
\defval{panel_full_n14_kept_pct}{17.84}
\defval{panel_full_n14_kept_final}{51\,036}
\defval{panel_full_n14_kept_final_pct}{3.47}
\defval{panel_full_n14_tables_s}{30}
\defval{panel_full_n14_root_s}{130}
\defval{panel_full_n14_nodes}{846}
\defval{panel_full_n14_tree_s}{127}
\defval{panel_full_n14_work}{242}
\defval{panel_full_n14_wall_s}{397}
\defval{panel_full_n14_refixed}{0}
\defval{panel_full_n14_gap_pct}{0.0009}
\defval{panel_full_n14_below_best_pct}{0.0000}
\defval{panel_full_n14_turbines}{14}
\defval{panel_full_n14_n_forbidden}{30}
\defval{panel_full_n14_maxrss_gb}{5.6}
\defval{panel_full_n14_gap_pct_max}{0.443}
\defval{panel_full_n14_cert_gap_max_pct}{0.011}
\defval{panel_full_n14_censored_gap_max_pct}{0.443}
\defval{panel_full_n14_below_best_max_pct}{0.187}
\defval{panel_full_n14_nodes_min}{1}
\defval{panel_full_n14_nodes_max}{18\,552}
\defval{panel_full_n14_tree_s_min}{3.4}
\defval{panel_full_n14_tree_s_max}{3\,600}
\defval{panel_full_n14_wall_s_min}{105}
\defval{panel_full_n14_wall_s_max}{3\,841}
\defval{panel_full_n14_root_gap_pct_min}{0.167}
\defval{panel_full_n14_root_gap_pct_max}{0.903}
\defval{panel_full_bm2_runs}{54}
\defval{panel_full_bm2_certified}{52}
\defval{panel_full_bm2_censored}{2}
\defval{panel_full_bm2_incomplete}{0}
\defval{panel_full_bm2_oom}{0}
\defval{panel_full_bm2_instances}{18}
\defval{panel_full_bm2_host}{razor}
\defval{panel_full_bm2_cpu}{Intel(R) Xeon(R) CPU E3-1220 V2 @ 3.10GHz}
\defval{panel_full_bm2_root_gap_pct}{0.583}
\defval{panel_full_bm2_patterns}{226\,633}
\defval{panel_full_bm2_kept}{92\,026}
\defval{panel_full_bm2_kept_pct}{38.16}
\defval{panel_full_bm2_kept_final}{20\,194}
\defval{panel_full_bm2_kept_final_pct}{8.99}
\defval{panel_full_bm2_tables_s}{4}
\defval{panel_full_bm2_root_s}{13}
\defval{panel_full_bm2_nodes}{858}
\defval{panel_full_bm2_tree_s}{36.2}
\defval{panel_full_bm2_work}{54.8}
\defval{panel_full_bm2_wall_s}{65}
\defval{panel_full_bm2_refixed}{783}
\defval{panel_full_bm2_gap_pct}{0.0002}
\defval{panel_full_bm2_below_best_pct}{0.0000}
\defval{panel_full_bm2_turbines}{10}
\defval{panel_full_bm2_n_forbidden}{22}
\defval{panel_full_bm2_maxrss_gb}{5.6}
\defval{panel_full_bm2_gap_pct_max}{0.443}
\defval{panel_full_bm2_cert_gap_max_pct}{0.0099}
\defval{panel_full_bm2_censored_gap_max_pct}{0.443}
\defval{panel_full_bm2_below_best_max_pct}{0.187}
\defval{panel_full_bm2_nodes_min}{1}
\defval{panel_full_bm2_nodes_max}{18\,552}
\defval{panel_full_bm2_tree_s_min}{0.36}
\defval{panel_full_bm2_tree_s_max}{3\,600}
\defval{panel_full_bm2_wall_s_min}{4}
\defval{panel_full_bm2_wall_s_max}{3\,841}
\defval{panel_full_bm2_root_gap_pct_min}{0.091}
\defval{panel_full_bm2_root_gap_pct_max}{0.903}
\defval{panel_full_bp2_runs}{54}
\defval{panel_full_bp2_certified}{54}
\defval{panel_full_bp2_censored}{0}
\defval{panel_full_bp2_incomplete}{0}
\defval{panel_full_bp2_oom}{0}
\defval{panel_full_bp2_instances}{18}
\defval{panel_full_bp2_host}{razor}
\defval{panel_full_bp2_cpu}{Intel(R) Xeon(R) CPU E3-1220 V2 @ 3.10GHz}
\defval{panel_full_bp2_root_gap_pct}{0.140}
\defval{panel_full_bp2_patterns}{226\,633}
\defval{panel_full_bp2_kept}{3\,610}
\defval{panel_full_bp2_kept_pct}{2.17}
\defval{panel_full_bp2_kept_final}{67.5}
\defval{panel_full_bp2_kept_final_pct}{0.127}
\defval{panel_full_bp2_tables_s}{4}
\defval{panel_full_bp2_root_s}{17.5}
\defval{panel_full_bp2_nodes}{1}
\defval{panel_full_bp2_tree_s}{0.06}
\defval{panel_full_bp2_work}{0.04}
\defval{panel_full_bp2_wall_s}{30}
\defval{panel_full_bp2_refixed}{0}
\defval{panel_full_bp2_gap_pct}{0.0063}
\defval{panel_full_bp2_below_best_pct}{0.0000}
\defval{panel_full_bp2_turbines}{14}
\defval{panel_full_bp2_n_forbidden}{22}
\defval{panel_full_bp2_maxrss_gb}{4.8}
\defval{panel_full_bp2_gap_pct_max}{0.011}
\defval{panel_full_bp2_cert_gap_max_pct}{0.011}
\defval{panel_full_bp2_below_best_max_pct}{0.0000}
\defval{panel_full_bp2_nodes_min}{1}
\defval{panel_full_bp2_nodes_max}{567}
\defval{panel_full_bp2_tree_s_min}{0.02}
\defval{panel_full_bp2_tree_s_max}{65.8}
\defval{panel_full_bp2_wall_s_min}{3}
\defval{panel_full_bp2_wall_s_max}{405}
\defval{panel_full_bp2_root_gap_pct_min}{0.089}
\defval{panel_full_bp2_root_gap_pct_max}{1.06}
\defval{panel_full_uniform_runs}{54}
\defval{panel_full_uniform_certified}{54}
\defval{panel_full_uniform_censored}{0}
\defval{panel_full_uniform_incomplete}{0}
\defval{panel_full_uniform_oom}{0}
\defval{panel_full_uniform_instances}{18}
\defval{panel_full_uniform_host}{razor}
\defval{panel_full_uniform_cpu}{Intel(R) Xeon(R) CPU E3-1220 V2 @ 3.10GHz}
\defval{panel_full_uniform_root_gap_pct}{0.267}
\defval{panel_full_uniform_patterns}{226\,633}
\defval{panel_full_uniform_kept}{24\,780}
\defval{panel_full_uniform_kept_pct}{10.86}
\defval{panel_full_uniform_kept_final}{6\,997}
\defval{panel_full_uniform_kept_final_pct}{1.98}
\defval{panel_full_uniform_tables_s}{4}
\defval{panel_full_uniform_root_s}{13.5}
\defval{panel_full_uniform_nodes}{17}
\defval{panel_full_uniform_tree_s}{7.5}
\defval{panel_full_uniform_work}{7.8}
\defval{panel_full_uniform_wall_s}{30.5}
\defval{panel_full_uniform_refixed}{0}
\defval{panel_full_uniform_gap_pct}{0.0004}
\defval{panel_full_uniform_below_best_pct}{0.0000}
\defval{panel_full_uniform_turbines}{11.5}
\defval{panel_full_uniform_n_forbidden}{22}
\defval{panel_full_uniform_maxrss_gb}{5.6}
\defval{panel_full_uniform_gap_pct_max}{0.011}
\defval{panel_full_uniform_cert_gap_max_pct}{0.011}
\defval{panel_full_uniform_below_best_max_pct}{0.0000}
\defval{panel_full_uniform_nodes_min}{1}
\defval{panel_full_uniform_nodes_max}{4\,808}
\defval{panel_full_uniform_tree_s_min}{0.02}
\defval{panel_full_uniform_tree_s_max}{834}
\defval{panel_full_uniform_wall_s_min}{3}
\defval{panel_full_uniform_wall_s_max}{962}
\defval{panel_full_uniform_root_gap_pct_min}{0.089}
\defval{panel_full_uniform_root_gap_pct_max}{0.678}
\defval{panel_full_p40_runs}{54}
\defval{panel_full_p40_certified}{52}
\defval{panel_full_p40_censored}{2}
\defval{panel_full_p40_incomplete}{0}
\defval{panel_full_p40_oom}{0}
\defval{panel_full_p40_instances}{18}
\defval{panel_full_p40_host}{razor}
\defval{panel_full_p40_cpu}{Intel(R) Xeon(R) CPU E3-1220 V2 @ 3.10GHz}
\defval{panel_full_p40_root_gap_pct}{0.391}
\defval{panel_full_p40_patterns}{226\,633}
\defval{panel_full_p40_kept}{23\,240}
\defval{panel_full_p40_kept_pct}{19.18}
\defval{panel_full_p40_kept_final}{6\,310}
\defval{panel_full_p40_kept_final_pct}{4.28}
\defval{panel_full_p40_tables_s}{4}
\defval{panel_full_p40_root_s}{17.5}
\defval{panel_full_p40_nodes}{35.5}
\defval{panel_full_p40_tree_s}{7.2}
\defval{panel_full_p40_work}{7.6}
\defval{panel_full_p40_wall_s}{56}
\defval{panel_full_p40_refixed}{0}
\defval{panel_full_p40_gap_pct}{0.0047}
\defval{panel_full_p40_below_best_pct}{0.0000}
\defval{panel_full_p40_turbines}{11.5}
\defval{panel_full_p40_n_forbidden}{22}
\defval{panel_full_p40_maxrss_gb}{5.6}
\defval{panel_full_p40_gap_pct_max}{0.443}
\defval{panel_full_p40_cert_gap_max_pct}{0.0099}
\defval{panel_full_p40_censored_gap_max_pct}{0.443}
\defval{panel_full_p40_below_best_max_pct}{0.187}
\defval{panel_full_p40_nodes_min}{1}
\defval{panel_full_p40_nodes_max}{18\,552}
\defval{panel_full_p40_tree_s_min}{0.02}
\defval{panel_full_p40_tree_s_max}{3\,600}
\defval{panel_full_p40_wall_s_min}{3}
\defval{panel_full_p40_wall_s_max}{3\,841}
\defval{panel_full_p40_root_gap_pct_min}{0.089}
\defval{panel_full_p40_root_gap_pct_max}{1.06}
\defval{panel_full_lake_runs}{36}
\defval{panel_full_lake_certified}{36}
\defval{panel_full_lake_censored}{0}
\defval{panel_full_lake_incomplete}{0}
\defval{panel_full_lake_oom}{0}
\defval{panel_full_lake_instances}{12}
\defval{panel_full_lake_host}{razor}
\defval{panel_full_lake_cpu}{Intel(R) Xeon(R) CPU E3-1220 V2 @ 3.10GHz}
\defval{panel_full_lake_root_gap_pct}{0.183}
\defval{panel_full_lake_patterns}{226\,633}
\defval{panel_full_lake_kept}{22\,659}
\defval{panel_full_lake_kept_pct}{5.25}
\defval{panel_full_lake_kept_final}{5\,616}
\defval{panel_full_lake_kept_final_pct}{0.928}
\defval{panel_full_lake_tables_s}{4}
\defval{panel_full_lake_root_s}{13.5}
\defval{panel_full_lake_nodes}{1}
\defval{panel_full_lake_tree_s}{5.6}
\defval{panel_full_lake_work}{5.5}
\defval{panel_full_lake_wall_s}{61.5}
\defval{panel_full_lake_refixed}{0}
\defval{panel_full_lake_gap_pct}{0.0062}
\defval{panel_full_lake_below_best_pct}{0.0000}
\defval{panel_full_lake_turbines}{11.5}
\defval{panel_full_lake_n_forbidden}{22}
\defval{panel_full_lake_maxrss_gb}{4.8}
\defval{panel_full_lake_gap_pct_max}{0.011}
\defval{panel_full_lake_cert_gap_max_pct}{0.011}
\defval{panel_full_lake_below_best_max_pct}{0.0000}
\defval{panel_full_lake_nodes_min}{1}
\defval{panel_full_lake_nodes_max}{4\,320}
\defval{panel_full_lake_tree_s_min}{0.04}
\defval{panel_full_lake_tree_s_max}{529}
\defval{panel_full_lake_wall_s_min}{3}
\defval{panel_full_lake_wall_s_max}{714}
\defval{panel_full_lake_root_gap_pct_min}{0.091}
\defval{panel_full_lake_root_gap_pct_max}{0.790}
\defval{panel_full_road_runs}{36}
\defval{panel_full_road_certified}{36}
\defval{panel_full_road_censored}{0}
\defval{panel_full_road_incomplete}{0}
\defval{panel_full_road_oom}{0}
\defval{panel_full_road_instances}{12}
\defval{panel_full_road_host}{razor}
\defval{panel_full_road_cpu}{Intel(R) Xeon(R) CPU E3-1220 V2 @ 3.10GHz}
\defval{panel_full_road_root_gap_pct}{0.458}
\defval{panel_full_road_patterns}{106\,610}
\defval{panel_full_road_kept}{20\,890}
\defval{panel_full_road_kept_pct}{20.17}
\defval{panel_full_road_kept_final}{5\,672}
\defval{panel_full_road_kept_final_pct}{5.32}
\defval{panel_full_road_tables_s}{2}
\defval{panel_full_road_root_s}{6}
\defval{panel_full_road_nodes}{23.5}
\defval{panel_full_road_tree_s}{4.7}
\defval{panel_full_road_work}{4.9}
\defval{panel_full_road_wall_s}{36.5}
\defval{panel_full_road_refixed}{45}
\defval{panel_full_road_gap_pct}{0.0004}
\defval{panel_full_road_below_best_pct}{0.0000}
\defval{panel_full_road_turbines}{11.5}
\defval{panel_full_road_n_forbidden}{32}
\defval{panel_full_road_maxrss_gb}{2.7}
\defval{panel_full_road_gap_pct_max}{0.0094}
\defval{panel_full_road_cert_gap_max_pct}{0.0094}
\defval{panel_full_road_below_best_max_pct}{0.0000}
\defval{panel_full_road_nodes_min}{1}
\defval{panel_full_road_nodes_max}{10\,136}
\defval{panel_full_road_tree_s_min}{0.02}
\defval{panel_full_road_tree_s_max}{1\,132}
\defval{panel_full_road_wall_s_min}{4}
\defval{panel_full_road_wall_s_max}{1\,231}
\defval{panel_full_road_root_gap_pct_min}{0.137}
\defval{panel_full_road_root_gap_pct_max}{1.06}
\defval{panel_full_holes_runs}{36}
\defval{panel_full_holes_certified}{34}
\defval{panel_full_holes_censored}{2}
\defval{panel_full_holes_incomplete}{0}
\defval{panel_full_holes_oom}{0}
\defval{panel_full_holes_instances}{12}
\defval{panel_full_holes_host}{razor}
\defval{panel_full_holes_cpu}{Intel(R) Xeon(R) CPU E3-1220 V2 @ 3.10GHz}
\defval{panel_full_holes_root_gap_pct}{0.454}
\defval{panel_full_holes_patterns}{284\,279}
\defval{panel_full_holes_kept}{59\,146}
\defval{panel_full_holes_kept_pct}{20.62}
\defval{panel_full_holes_kept_final}{15\,670}
\defval{panel_full_holes_kept_final_pct}{3.53}
\defval{panel_full_holes_tables_s}{6}
\defval{panel_full_holes_root_s}{17}
\defval{panel_full_holes_nodes}{180}
\defval{panel_full_holes_tree_s}{25.7}
\defval{panel_full_holes_work}{34.4}
\defval{panel_full_holes_wall_s}{55}
\defval{panel_full_holes_refixed}{41}
\defval{panel_full_holes_gap_pct}{0.0004}
\defval{panel_full_holes_below_best_pct}{0.0000}
\defval{panel_full_holes_turbines}{11.5}
\defval{panel_full_holes_n_forbidden}{15}
\defval{panel_full_holes_maxrss_gb}{5.6}
\defval{panel_full_holes_gap_pct_max}{0.443}
\defval{panel_full_holes_cert_gap_max_pct}{0.0080}
\defval{panel_full_holes_censored_gap_max_pct}{0.443}
\defval{panel_full_holes_below_best_max_pct}{0.187}
\defval{panel_full_holes_nodes_min}{1}
\defval{panel_full_holes_nodes_max}{18\,552}
\defval{panel_full_holes_tree_s_min}{0.03}
\defval{panel_full_holes_tree_s_max}{3\,600}
\defval{panel_full_holes_wall_s_min}{3}
\defval{panel_full_holes_wall_s_max}{3\,841}
\defval{panel_full_holes_root_gap_pct_min}{0.089}
\defval{panel_full_holes_root_gap_pct_max}{0.903}
\defval{panel_ejor_runs}{108}
\defval{panel_ejor_certified}{38}
\defval{panel_ejor_censored}{70}
\defval{panel_ejor_incomplete}{0}
\defval{panel_ejor_oom}{0}
\defval{panel_ejor_instances}{36}
\defval{panel_ejor_host}{arrow}
\defval{panel_ejor_cpu}{Intel(R) Xeon(R) CPU E5-2623 v3 @ 3.00GHz}
\defval{panel_ejor_root_gap_pct}{5.92}
\defval{panel_ejor_nodes}{92\,438}
\defval{panel_ejor_tree_s}{3\,600}
\defval{panel_ejor_wall_s}{3\,601}
\defval{panel_ejor_gap_pct}{3.46}
\defval{panel_ejor_below_best_pct}{0.116}
\defval{panel_ejor_turbines}{11.5}
\defval{panel_ejor_n_forbidden}{22}
\defval{panel_ejor_gap_pct_max}{10.01}
\defval{panel_ejor_cert_gap_max_pct}{0.0094}
\defval{panel_ejor_censored_gap_max_pct}{10.01}
\defval{panel_ejor_below_best_max_pct}{2.11}
\defval{panel_ejor_nodes_min}{7\,091}
\defval{panel_ejor_nodes_max}{706\,074}
\defval{panel_ejor_tree_s_min}{8.2}
\defval{panel_ejor_tree_s_max}{3\,601}
\defval{panel_ejor_wall_s_min}{9}
\defval{panel_ejor_wall_s_max}{3\,602}
\defval{panel_ejor_root_gap_pct_min}{2.48}
\defval{panel_ejor_root_gap_pct_max}{17.31}
\defval{panel_ejor_n10_runs}{36}
\defval{panel_ejor_n10_certified}{36}
\defval{panel_ejor_n10_censored}{0}
\defval{panel_ejor_n10_incomplete}{0}
\defval{panel_ejor_n10_oom}{0}
\defval{panel_ejor_n10_instances}{12}
\defval{panel_ejor_n10_host}{arrow}
\defval{panel_ejor_n10_cpu}{Intel(R) Xeon(R) CPU E5-2623 v3 @ 3.00GHz}
\defval{panel_ejor_n10_root_gap_pct}{8.42}
\defval{panel_ejor_n10_nodes}{37\,754}
\defval{panel_ejor_n10_tree_s}{64.3}
\defval{panel_ejor_n10_wall_s}{64.5}
\defval{panel_ejor_n10_gap_pct}{0.0000}
\defval{panel_ejor_n10_below_best_pct}{0.0000}
\defval{panel_ejor_n10_turbines}{9}
\defval{panel_ejor_n10_n_forbidden}{15}
\defval{panel_ejor_n10_gap_pct_max}{0.0094}
\defval{panel_ejor_n10_cert_gap_max_pct}{0.0094}
\defval{panel_ejor_n10_below_best_max_pct}{0.0000}
\defval{panel_ejor_n10_nodes_min}{7\,091}
\defval{panel_ejor_n10_nodes_max}{382\,738}
\defval{panel_ejor_n10_tree_s_min}{8.2}
\defval{panel_ejor_n10_tree_s_max}{3\,020}
\defval{panel_ejor_n10_wall_s_min}{9}
\defval{panel_ejor_n10_wall_s_max}{3\,021}
\defval{panel_ejor_n10_root_gap_pct_min}{2.48}
\defval{panel_ejor_n10_root_gap_pct_max}{17.31}
\defval{panel_ejor_n12_runs}{36}
\defval{panel_ejor_n12_certified}{2}
\defval{panel_ejor_n12_censored}{34}
\defval{panel_ejor_n12_incomplete}{0}
\defval{panel_ejor_n12_oom}{0}
\defval{panel_ejor_n12_instances}{12}
\defval{panel_ejor_n12_host}{arrow}
\defval{panel_ejor_n12_cpu}{Intel(R) Xeon(R) CPU E5-2623 v3 @ 3.00GHz}
\defval{panel_ejor_n12_root_gap_pct}{6.76}
\defval{panel_ejor_n12_nodes}{156\,182}
\defval{panel_ejor_n12_tree_s}{3\,600}
\defval{panel_ejor_n12_wall_s}{3\,601}
\defval{panel_ejor_n12_gap_pct}{3.46}
\defval{panel_ejor_n12_below_best_pct}{0.116}
\defval{panel_ejor_n12_turbines}{12}
\defval{panel_ejor_n12_n_forbidden}{22}
\defval{panel_ejor_n12_gap_pct_max}{8.76}
\defval{panel_ejor_n12_cert_gap_max_pct}{0.0001}
\defval{panel_ejor_n12_censored_gap_max_pct}{8.76}
\defval{panel_ejor_n12_below_best_max_pct}{1.28}
\defval{panel_ejor_n12_nodes_min}{109\,837}
\defval{panel_ejor_n12_nodes_max}{706\,074}
\defval{panel_ejor_n12_tree_s_min}{3\,146}
\defval{panel_ejor_n12_tree_s_max}{3\,600}
\defval{panel_ejor_n12_wall_s_min}{3\,147}
\defval{panel_ejor_n12_wall_s_max}{3\,601}
\defval{panel_ejor_n12_root_gap_pct_min}{3.24}
\defval{panel_ejor_n12_root_gap_pct_max}{12.79}
\defval{panel_ejor_n14_runs}{36}
\defval{panel_ejor_n14_certified}{0}
\defval{panel_ejor_n14_censored}{36}
\defval{panel_ejor_n14_incomplete}{0}
\defval{panel_ejor_n14_oom}{0}
\defval{panel_ejor_n14_instances}{12}
\defval{panel_ejor_n14_host}{arrow}
\defval{panel_ejor_n14_cpu}{Intel(R) Xeon(R) CPU E5-2623 v3 @ 3.00GHz}
\defval{panel_ejor_n14_root_gap_pct}{5.92}
\defval{panel_ejor_n14_nodes}{69\,926}
\defval{panel_ejor_n14_tree_s}{3\,600}
\defval{panel_ejor_n14_wall_s}{3\,601}
\defval{panel_ejor_n14_gap_pct}{5.85}
\defval{panel_ejor_n14_below_best_pct}{0.594}
\defval{panel_ejor_n14_turbines}{14}
\defval{panel_ejor_n14_n_forbidden}{30}
\defval{panel_ejor_n14_gap_pct_max}{10.01}
\defval{panel_ejor_n14_censored_gap_max_pct}{10.01}
\defval{panel_ejor_n14_below_best_max_pct}{2.11}
\defval{panel_ejor_n14_nodes_min}{48\,074}
\defval{panel_ejor_n14_nodes_max}{100\,726}
\defval{panel_ejor_n14_tree_s_min}{3\,600}
\defval{panel_ejor_n14_tree_s_max}{3\,601}
\defval{panel_ejor_n14_wall_s_min}{3\,600}
\defval{panel_ejor_n14_wall_s_max}{3\,602}
\defval{panel_ejor_n14_root_gap_pct_min}{3.58}
\defval{panel_ejor_n14_root_gap_pct_max}{9.01}
\defval{panel_ejor_bm2_runs}{54}
\defval{panel_ejor_bm2_certified}{18}
\defval{panel_ejor_bm2_censored}{36}
\defval{panel_ejor_bm2_incomplete}{0}
\defval{panel_ejor_bm2_oom}{0}
\defval{panel_ejor_bm2_instances}{18}
\defval{panel_ejor_bm2_host}{arrow}
\defval{panel_ejor_bm2_cpu}{Intel(R) Xeon(R) CPU E5-2623 v3 @ 3.00GHz}
\defval{panel_ejor_bm2_root_gap_pct}{3.32}
\defval{panel_ejor_bm2_nodes}{114\,856}
\defval{panel_ejor_bm2_tree_s}{3\,600}
\defval{panel_ejor_bm2_wall_s}{3\,601}
\defval{panel_ejor_bm2_gap_pct}{3.43}
\defval{panel_ejor_bm2_below_best_pct}{0.116}
\defval{panel_ejor_bm2_turbines}{10}
\defval{panel_ejor_bm2_n_forbidden}{22}
\defval{panel_ejor_bm2_gap_pct_max}{4.20}
\defval{panel_ejor_bm2_cert_gap_max_pct}{0.0094}
\defval{panel_ejor_bm2_censored_gap_max_pct}{4.20}
\defval{panel_ejor_bm2_below_best_max_pct}{0.595}
\defval{panel_ejor_bm2_nodes_min}{60\,852}
\defval{panel_ejor_bm2_nodes_max}{382\,738}
\defval{panel_ejor_bm2_tree_s_min}{113}
\defval{panel_ejor_bm2_tree_s_max}{3\,600}
\defval{panel_ejor_bm2_wall_s_min}{113}
\defval{panel_ejor_bm2_wall_s_max}{3\,601}
\defval{panel_ejor_bm2_root_gap_pct_min}{2.48}
\defval{panel_ejor_bm2_root_gap_pct_max}{3.58}
\defval{panel_ejor_bp2_runs}{54}
\defval{panel_ejor_bp2_certified}{20}
\defval{panel_ejor_bp2_censored}{34}
\defval{panel_ejor_bp2_incomplete}{0}
\defval{panel_ejor_bp2_oom}{0}
\defval{panel_ejor_bp2_instances}{18}
\defval{panel_ejor_bp2_host}{arrow}
\defval{panel_ejor_bp2_cpu}{Intel(R) Xeon(R) CPU E5-2623 v3 @ 3.00GHz}
\defval{panel_ejor_bp2_root_gap_pct}{10.80}
\defval{panel_ejor_bp2_nodes}{58\,884}
\defval{panel_ejor_bp2_tree_s}{3\,600}
\defval{panel_ejor_bp2_wall_s}{3\,601}
\defval{panel_ejor_bp2_gap_pct}{7.42}
\defval{panel_ejor_bp2_below_best_pct}{0.058}
\defval{panel_ejor_bp2_turbines}{14}
\defval{panel_ejor_bp2_n_forbidden}{22}
\defval{panel_ejor_bp2_gap_pct_max}{10.01}
\defval{panel_ejor_bp2_cert_gap_max_pct}{0.0001}
\defval{panel_ejor_bp2_censored_gap_max_pct}{10.01}
\defval{panel_ejor_bp2_below_best_max_pct}{2.11}
\defval{panel_ejor_bp2_nodes_min}{7\,091}
\defval{panel_ejor_bp2_nodes_max}{706\,074}
\defval{panel_ejor_bp2_tree_s_min}{8.2}
\defval{panel_ejor_bp2_tree_s_max}{3\,601}
\defval{panel_ejor_bp2_wall_s_min}{9}
\defval{panel_ejor_bp2_wall_s_max}{3\,602}
\defval{panel_ejor_bp2_root_gap_pct_min}{8.26}
\defval{panel_ejor_bp2_root_gap_pct_max}{17.31}
\defval{panel_ejor_uniform_runs}{54}
\defval{panel_ejor_uniform_certified}{20}
\defval{panel_ejor_uniform_censored}{34}
\defval{panel_ejor_uniform_incomplete}{0}
\defval{panel_ejor_uniform_oom}{0}
\defval{panel_ejor_uniform_instances}{18}
\defval{panel_ejor_uniform_host}{arrow}
\defval{panel_ejor_uniform_cpu}{Intel(R) Xeon(R) CPU E5-2623 v3 @ 3.00GHz}
\defval{panel_ejor_uniform_root_gap_pct}{5.98}
\defval{panel_ejor_uniform_nodes}{93\,344}
\defval{panel_ejor_uniform_tree_s}{3\,600}
\defval{panel_ejor_uniform_wall_s}{3\,601}
\defval{panel_ejor_uniform_gap_pct}{3.48}
\defval{panel_ejor_uniform_below_best_pct}{0.124}
\defval{panel_ejor_uniform_turbines}{11.5}
\defval{panel_ejor_uniform_n_forbidden}{22}
\defval{panel_ejor_uniform_gap_pct_max}{10.01}
\defval{panel_ejor_uniform_cert_gap_max_pct}{0.0056}
\defval{panel_ejor_uniform_censored_gap_max_pct}{10.01}
\defval{panel_ejor_uniform_below_best_max_pct}{2.11}
\defval{panel_ejor_uniform_nodes_min}{9\,560}
\defval{panel_ejor_uniform_nodes_max}{706\,074}
\defval{panel_ejor_uniform_tree_s_min}{12.2}
\defval{panel_ejor_uniform_tree_s_max}{3\,601}
\defval{panel_ejor_uniform_wall_s_min}{13}
\defval{panel_ejor_uniform_wall_s_max}{3\,602}
\defval{panel_ejor_uniform_root_gap_pct_min}{2.48}
\defval{panel_ejor_uniform_root_gap_pct_max}{16.93}
\defval{panel_ejor_p40_runs}{54}
\defval{panel_ejor_p40_certified}{18}
\defval{panel_ejor_p40_censored}{36}
\defval{panel_ejor_p40_incomplete}{0}
\defval{panel_ejor_p40_oom}{0}
\defval{panel_ejor_p40_instances}{18}
\defval{panel_ejor_p40_host}{arrow}
\defval{panel_ejor_p40_cpu}{Intel(R) Xeon(R) CPU E5-2623 v3 @ 3.00GHz}
\defval{panel_ejor_p40_root_gap_pct}{5.92}
\defval{panel_ejor_p40_nodes}{91\,811}
\defval{panel_ejor_p40_tree_s}{3\,600}
\defval{panel_ejor_p40_wall_s}{3\,601}
\defval{panel_ejor_p40_gap_pct}{3.45}
\defval{panel_ejor_p40_below_best_pct}{0.113}
\defval{panel_ejor_p40_turbines}{11.5}
\defval{panel_ejor_p40_n_forbidden}{22}
\defval{panel_ejor_p40_gap_pct_max}{9.64}
\defval{panel_ejor_p40_cert_gap_max_pct}{0.0094}
\defval{panel_ejor_p40_censored_gap_max_pct}{9.64}
\defval{panel_ejor_p40_below_best_max_pct}{2.06}
\defval{panel_ejor_p40_nodes_min}{7\,091}
\defval{panel_ejor_p40_nodes_max}{591\,725}
\defval{panel_ejor_p40_tree_s_min}{8.2}
\defval{panel_ejor_p40_tree_s_max}{3\,600}
\defval{panel_ejor_p40_wall_s_min}{9}
\defval{panel_ejor_p40_wall_s_max}{3\,601}
\defval{panel_ejor_p40_root_gap_pct_min}{2.52}
\defval{panel_ejor_p40_root_gap_pct_max}{17.31}
\defval{panel_ejor_lake_runs}{36}
\defval{panel_ejor_lake_certified}{12}
\defval{panel_ejor_lake_censored}{24}
\defval{panel_ejor_lake_incomplete}{0}
\defval{panel_ejor_lake_oom}{0}
\defval{panel_ejor_lake_instances}{12}
\defval{panel_ejor_lake_host}{arrow}
\defval{panel_ejor_lake_cpu}{Intel(R) Xeon(R) CPU E5-2623 v3 @ 3.00GHz}
\defval{panel_ejor_lake_root_gap_pct}{5.92}
\defval{panel_ejor_lake_nodes}{107\,594}
\defval{panel_ejor_lake_tree_s}{3\,600}
\defval{panel_ejor_lake_wall_s}{3\,601}
\defval{panel_ejor_lake_gap_pct}{3.71}
\defval{panel_ejor_lake_below_best_pct}{0.113}
\defval{panel_ejor_lake_turbines}{11.5}
\defval{panel_ejor_lake_n_forbidden}{22}
\defval{panel_ejor_lake_gap_pct_max}{9.70}
\defval{panel_ejor_lake_cert_gap_max_pct}{0.0001}
\defval{panel_ejor_lake_censored_gap_max_pct}{9.70}
\defval{panel_ejor_lake_below_best_max_pct}{2.11}
\defval{panel_ejor_lake_nodes_min}{10\,032}
\defval{panel_ejor_lake_nodes_max}{241\,336}
\defval{panel_ejor_lake_tree_s_min}{12.2}
\defval{panel_ejor_lake_tree_s_max}{3\,600}
\defval{panel_ejor_lake_wall_s_min}{13}
\defval{panel_ejor_lake_wall_s_max}{3\,601}
\defval{panel_ejor_lake_root_gap_pct_min}{2.48}
\defval{panel_ejor_lake_root_gap_pct_max}{15.46}
\defval{panel_ejor_road_runs}{36}
\defval{panel_ejor_road_certified}{14}
\defval{panel_ejor_road_censored}{22}
\defval{panel_ejor_road_incomplete}{0}
\defval{panel_ejor_road_oom}{0}
\defval{panel_ejor_road_instances}{12}
\defval{panel_ejor_road_host}{arrow}
\defval{panel_ejor_road_cpu}{Intel(R) Xeon(R) CPU E5-2623 v3 @ 3.00GHz}
\defval{panel_ejor_road_root_gap_pct}{5.99}
\defval{panel_ejor_road_nodes}{74\,968}
\defval{panel_ejor_road_tree_s}{3\,600}
\defval{panel_ejor_road_wall_s}{3\,601}
\defval{panel_ejor_road_gap_pct}{3.10}
\defval{panel_ejor_road_below_best_pct}{0.010}
\defval{panel_ejor_road_turbines}{11.5}
\defval{panel_ejor_road_n_forbidden}{32}
\defval{panel_ejor_road_gap_pct_max}{9.64}
\defval{panel_ejor_road_cert_gap_max_pct}{0.0086}
\defval{panel_ejor_road_censored_gap_max_pct}{9.64}
\defval{panel_ejor_road_below_best_max_pct}{2.06}
\defval{panel_ejor_road_nodes_min}{11\,401}
\defval{panel_ejor_road_nodes_max}{706\,074}
\defval{panel_ejor_road_tree_s_min}{12.2}
\defval{panel_ejor_road_tree_s_max}{3\,600}
\defval{panel_ejor_road_wall_s_min}{13}
\defval{panel_ejor_road_wall_s_max}{3\,602}
\defval{panel_ejor_road_root_gap_pct_min}{2.74}
\defval{panel_ejor_road_root_gap_pct_max}{17.31}
\defval{panel_ejor_holes_runs}{36}
\defval{panel_ejor_holes_certified}{12}
\defval{panel_ejor_holes_censored}{24}
\defval{panel_ejor_holes_incomplete}{0}
\defval{panel_ejor_holes_oom}{0}
\defval{panel_ejor_holes_instances}{12}
\defval{panel_ejor_holes_host}{arrow}
\defval{panel_ejor_holes_cpu}{Intel(R) Xeon(R) CPU E5-2623 v3 @ 3.00GHz}
\defval{panel_ejor_holes_root_gap_pct}{6.26}
\defval{panel_ejor_holes_nodes}{96\,348}
\defval{panel_ejor_holes_tree_s}{3\,600}
\defval{panel_ejor_holes_wall_s}{3\,601}
\defval{panel_ejor_holes_gap_pct}{3.65}
\defval{panel_ejor_holes_below_best_pct}{0.207}
\defval{panel_ejor_holes_turbines}{11.5}
\defval{panel_ejor_holes_n_forbidden}{15}
\defval{panel_ejor_holes_gap_pct_max}{10.01}
\defval{panel_ejor_holes_cert_gap_max_pct}{0.0094}
\defval{panel_ejor_holes_censored_gap_max_pct}{10.01}
\defval{panel_ejor_holes_below_best_max_pct}{1.85}
\defval{panel_ejor_holes_nodes_min}{7\,091}
\defval{panel_ejor_holes_nodes_max}{382\,738}
\defval{panel_ejor_holes_tree_s_min}{8.2}
\defval{panel_ejor_holes_tree_s_max}{3\,601}
\defval{panel_ejor_holes_wall_s_min}{9}
\defval{panel_ejor_holes_wall_s_max}{3\,602}
\defval{panel_ejor_holes_root_gap_pct_min}{2.48}
\defval{panel_ejor_holes_root_gap_pct_max}{14.35}
\defval{bound_n10_ejorpub_bound_above_pct}{0.0}
\defval{bound_n10_ejorpub_root_above_pct}{15.2}
\defval{bound_n12_ejorpub_bound_above_pct}{7.6}
\defval{bound_n12_ejorpub_root_above_pct}{10.2}
\defval{bound_n14_ejorpub_bound_above_pct}{7.3}
\defval{bound_n14_ejorpub_root_above_pct}{8.3}
\defval{bound_n16_ejorpub_bound_above_pct}{7.3}
\defval{bound_n16_ejorpub_root_above_pct}{7.5}
\defval{ejor16_tl3600_runs}{5}
\defval{ejor16_tl3600_certified}{0}
\defval{ejor16_tl3600_found_opt}{0}
\defval{ejor16_tl3600_gap_pct}{9.9}
\defval{ejor16_tl3600_gap_min_pct}{8.9}
\defval{ejor16_tl3600_gap_max_pct}{10.4}
\defval{ejor16_tl3600_best}{43016.20}
\defval{ejor16_tl3600_best_below_pct}{2.4}
\defval{ejor16_tl3600_best_below_min_pct}{1.5}
\defval{ejor16_tl3600_best_below_max_pct}{2.8}
\defval{ejor16_tl3600_bound}{46856.10}
\defval{ejor16_tl3600_bound_above_pct}{7.3}
\defval{ejor16_tl3600_root}{46956.50}
\defval{ejor16_tl3600_nodes}{45\,787}
\defval{ejor16_tl3600_cuts}{59\,743}
\defval{ejor16_tl3600_t_s}{3\,600}
\defval{ejor16_tl3600_wall_s}{3\,602}
\defval{ejor16_tl10800_runs}{5}
\defval{ejor16_tl10800_certified}{0}
\defval{ejor16_tl10800_found_opt}{0}
\defval{ejor16_tl10800_gap_pct}{9.7}
\defval{ejor16_tl10800_gap_min_pct}{8.9}
\defval{ejor16_tl10800_gap_max_pct}{10.1}
\defval{ejor16_tl10800_best}{43016.20}
\defval{ejor16_tl10800_best_below_pct}{2.2}
\defval{ejor16_tl10800_best_below_min_pct}{1.5}
\defval{ejor16_tl10800_best_below_max_pct}{2.5}
\defval{ejor16_tl10800_bound}{46856.10}
\defval{ejor16_tl10800_bound_above_pct}{7.3}
\defval{ejor16_tl10800_root}{46956.50}
\defval{ejor16_tl10800_nodes}{82\,510}
\defval{ejor16_tl10800_cuts}{97\,868}
\defval{ejor16_tl10800_t_s}{10\,800}
\defval{ejor16_tl10800_wall_s}{10\,802}
\defval{ejor16_runs}{10}
\defval{ejor16_certified}{0}
\defval{ejor16_opt}{43672.38}
\defval{ejor16_hosts}{razor-02, razor-03, razor-04, razor-05, razor-07, razor-08}
\defval{ejor16_cpu}{Intel(R) Xeon(R) CPU E3-1220 V2 @ 3.10GHz}
\defval{ejor16_gurobipy}{13.0.3}
\defval{ejor16up_cutoff_runs}{5}
\defval{ejor16up_cutoff_certified}{0}
\defval{ejor16up_cutoff_tl}{10\,800}
\defval{ejor16up_cutoff_optimum}{43672.38}
\defval{ejor16up_cutoff_cutoff}{43676.75}
\defval{ejor16up_cutoff_gap_pct}{7.3}
\defval{ejor16up_cutoff_solver_gap_pct}{10.3}
\defval{ejor16up_cutoff_best_below_pct}{2.8}
\defval{ejor16up_cutoff_nodes}{121\,894}
\defval{ejor16up_cutoff_t}{10\,800}
\defval{ejor16up_cutoff_wall_s}{10\,802}
\defval{ejor16up_cutoff_root}{46956.50}
\defval{ejor16up_cutoff_bound}{46856.10}
\defval{ejor16up_cutoff_gap_min_pct}{7.3}
\defval{ejor16up_cutoff_wall_min_s}{10\,802}
\defval{ejor16up_cutoff_wall_max_s}{10\,802}
\defval{ejor16up_cutoff_maxrss_gb}{4.1}
\defval{ejor16up_plain_runs}{5}
\defval{ejor16up_plain_certified}{0}
\defval{ejor16up_plain_tl}{10\,800}
\defval{ejor16up_plain_optimum}{43672.38}
\defval{ejor16up_plain_gap_pct}{7.3}
\defval{ejor16up_plain_solver_gap_pct}{10.3}
\defval{ejor16up_plain_best_below_pct}{2.7}
\defval{ejor16up_plain_nodes}{108\,985}
\defval{ejor16up_plain_t}{10\,800}
\defval{ejor16up_plain_wall_s}{10\,802}
\defval{ejor16up_plain_root}{46956.50}
\defval{ejor16up_plain_bound}{46856.10}
\defval{ejor16up_plain_gap_min_pct}{7.3}
\defval{ejor16up_plain_wall_min_s}{10\,801}
\defval{ejor16up_plain_wall_max_s}{10\,802}
\defval{ejor16up_plain_maxrss_gb}{4.4}
\defval{heurcpu_seeds}{3}
\defval{heurcpu_same}{3}
\defval{heurcpu_cpu}{INTEL(R) XEON(R) PLATINUM 8592+}
\defval{heurcpu_cpu_tl}{10\,800}
\defval{heurcpu_gpu_tl}{1\,500}
\defval{heurcpu_cpu_opt}{3}
\defval{heurcpu_at_gpu_tl_opt}{2}
\defval{heurcpu_at_gpu_t_opt}{0}
\defval{heurcpu_ratio_min}{65}
\defval{heurcpu_ratio_max}{66}
\defval{heurcpu_cpu_t_min}{9\,363}
\defval{heurcpu_cpu_t_max}{10\,800}
\defval{heurcpu_gpu_t_min}{145}
\defval{heurcpu_gpu_t_max}{190}
\defval{heurcpu_cpu_t_opt_min}{508}
\defval{heurcpu_cpu_t_opt_max}{1\,986}
\defval{heurcpu_gpu_t_opt_min}{16.4}
\defval{heurcpu_gpu_t_opt_max}{38.6}
\defval{heurcpu_at_gpu_t_gap_pct_min}{0.40}
\defval{heurcpu_at_gpu_t_gap_pct_max}{0.87}
\defval{heurcpu_at_gpu_tl_gap_pct_min}{0.00}
\defval{heurcpu_at_gpu_tl_gap_pct_max}{0.27}
\defval{d12_runs}{13}
\defval{d12_certified}{5}
\defval{d12_grids}{10$\times$10, 12$\times$12, 14$\times$14}
\defval{d12_n10_runs}{5}
\defval{d12_n10_certified}{5}
\defval{d12_n10_censored}{0}
\defval{d12_n10_oom}{0}
\defval{d12_n10_seeds}{1, 2, 3, 4, 5}
\defval{d12_n10_host}{razor-02.dei.unipd.it, razor-03.dei.unipd.it, razor-04.dei.unipd.it, razor-07.dei.unipd.it, razor-08.dei.unipd.it}
\defval{d12_n10_cpu}{Intel(R) Xeon(R) CPU E3-1220 V2 @ 3.10GHz}
\defval{d12_n10_best}{25913.92}
\defval{d12_n10_best_certified}{True}
\defval{d12_n10_root}{27123.41}
\defval{d12_n10_root_gap_pct}{4.67}
\defval{d12_n10_patterns}{71\,200}
\defval{d12_n10_power_rows}{2\,904}
\defval{d12_n10_regions}{8}
\defval{d12_n10_kept}{66\,666}
\defval{d12_n10_kept_pct}{93.6}
\defval{d12_n10_kept_final}{63\,417}
\defval{d12_n10_kept_final_pct}{89.1}
\defval{d12_n10_tables_s}{3}
\defval{d12_n10_root_s}{14}
\defval{d12_n10_fix_s}{10}
\defval{d12_n10_overlap_s}{0.11}
\defval{d12_n10_nodes}{2\,700}
\defval{d12_n10_tree_s}{146}
\defval{d12_n10_work}{288}
\defval{d12_n10_wall_s}{178}
\defval{d12_n10_refixed}{0}
\defval{d12_n10_exact}{25913.92}
\defval{d12_n10_gap_pct}{0.0003}
\defval{d12_n10_inc}{25913.92}
\defval{d12_n10_maxrss_gb}{1.4}
\defval{d12_n10_gap_pct_max}{0.0003}
\defval{d12_n10_bound_min}{25914.00}
\defval{d12_n10_nodes_min}{2\,649}
\defval{d12_n10_nodes_max}{2\,828}
\defval{d12_n10_tree_s_min}{140}
\defval{d12_n10_tree_s_max}{176}
\defval{d12_n10_wall_s_min}{171}
\defval{d12_n10_wall_s_max}{208}
\defval{d12_n10_work_min}{263}
\defval{d12_n10_work_max}{356}
\defval{d12_n12_runs}{5}
\defval{d12_n12_certified}{0}
\defval{d12_n12_censored}{5}
\defval{d12_n12_oom}{0}
\defval{d12_n12_seeds}{1, 2, 3, 4, 5}
\defval{d12_n12_host}{razor-05.dei.unipd.it, razor-10.dei.unipd.it, razor-13.dei.unipd.it, razor-15.dei.unipd.it, razor-16.dei.unipd.it}
\defval{d12_n12_cpu}{Intel(R) Xeon(R) CPU E3-1220 V2 @ 3.10GHz}
\defval{d12_n12_best}{31941.82}
\defval{d12_n12_best_certified}{False}
\defval{d12_n12_root}{34060.13}
\defval{d12_n12_root_gap_pct}{6.63}
\defval{d12_n12_patterns}{477\,050}
\defval{d12_n12_power_rows}{4\,500}
\defval{d12_n12_regions}{10}
\defval{d12_n12_kept}{477\,050}
\defval{d12_n12_kept_pct}{100}
\defval{d12_n12_kept_final}{477\,050}
\defval{d12_n12_kept_final_pct}{100}
\defval{d12_n12_tables_s}{27}
\defval{d12_n12_root_s}{128}
\defval{d12_n12_fix_s}{28}
\defval{d12_n12_overlap_s}{0.48}
\defval{d12_n12_nodes}{3\,121}
\defval{d12_n12_tree_s}{10\,800}
\defval{d12_n12_work}{17\,685}
\defval{d12_n12_wall_s}{11\,007}
\defval{d12_n12_refixed}{0}
\defval{d12_n12_exact}{31777.49}
\defval{d12_n12_gap_pct}{5.93}
\defval{d12_n12_inc}{31595.66}
\defval{d12_n12_maxrss_gb}{11.1}
\defval{d12_n12_gap_pct_max}{6.35}
\defval{d12_n12_bound_min}{33640.90}
\defval{d12_n12_nodes_min}{2\,054}
\defval{d12_n12_nodes_max}{4\,188}
\defval{d12_n12_tree_s_min}{10\,800}
\defval{d12_n12_tree_s_max}{10\,800}
\defval{d12_n12_wall_s_min}{11\,006}
\defval{d12_n12_wall_s_max}{11\,011}
\defval{d12_n12_work_min}{17\,420}
\defval{d12_n12_work_max}{18\,108}
\defval{d12_n14_runs}{3}
\defval{d12_n14_certified}{0}
\defval{d12_n14_censored}{3}
\defval{d12_n14_oom}{0}
\defval{d12_n14_seeds}{1, 2, 3}
\defval{d12_n14_host}{node24}
\defval{d12_n14_cpu}{INTEL(R) XEON(R) PLATINUM 8592+}
\defval{d12_n14_best}{36688.30}
\defval{d12_n14_best_certified}{False}
\defval{d12_n14_root}{40171.93}
\defval{d12_n14_root_gap_pct}{9.50}
\defval{d12_n14_patterns}{3\,035\,916}
\defval{d12_n14_power_rows}{6\,472}
\defval{d12_n14_regions}{12}
\defval{d12_n14_kept}{3\,035\,916}
\defval{d12_n14_kept_pct}{100}
\defval{d12_n14_kept_final}{3\,035\,916}
\defval{d12_n14_kept_final_pct}{100}
\defval{d12_n14_tables_s}{23}
\defval{d12_n14_root_s}{498}
\defval{d12_n14_fix_s}{103}
\defval{d12_n14_overlap_s}{2.0}
\defval{d12_n14_nodes}{7}
\defval{d12_n14_tree_s}{10\,802}
\defval{d12_n14_work}{20\,767}
\defval{d12_n14_wall_s}{11\,487}
\defval{d12_n14_refixed}{0}
\defval{d12_n14_exact}{36688.30}
\defval{d12_n14_gap_pct}{9.43}
\defval{d12_n14_inc}{36688.30}
\defval{d12_n14_maxrss_gb}{43.3}
\defval{d12_n14_gap_pct_max}{9.43}
\defval{d12_n14_bound_min}{40149.70}
\defval{d12_n14_nodes_min}{3}
\defval{d12_n14_nodes_max}{8}
\defval{d12_n14_tree_s_min}{10\,801}
\defval{d12_n14_tree_s_max}{10\,804}
\defval{d12_n14_wall_s_min}{11\,473}
\defval{d12_n14_wall_s_max}{11\,488}
\defval{d12_n14_work_min}{18\,879}
\defval{d12_n14_work_max}{21\,635}
\defval{d12_ref4_n10_runs}{5}
\defval{d12_ref4_n10_certified}{5}
\defval{d12_ref4_n10_exact}{25688.84}
\defval{d12_ref4_n10_kept_pct}{0.78}
\defval{d12_ref4_n10_kept_final_pct}{0.10}
\defval{d12_ref4_n10_kept}{554}
\defval{d12_ref4_n10_kept_final}{68}
\defval{d12_ref4_n10_patterns}{71\,200}
\defval{d12_ref4_n10_root_s}{3}
\defval{d12_ref4_n10_tables_s}{1}
\defval{d12_ref4_n10_tree_s}{0.16}
\defval{d12_ref4_n10_nodes}{1}
\defval{d12_ref4_n10_wall_s}{8}
\defval{d12_ref4_n10_work}{0.08}
\defval{d12_ref4_n10_root}{25711.90}
\defval{d12_ref4_n10_root_gap_pct}{0.090}
\defval{d12_ref4_n10_gap_pct}{0.0003}
\defval{d12_ref4_n10_maxrss_gb}{0.21}
\defval{d12_ref4_n12_runs}{5}
\defval{d12_ref4_n12_certified}{5}
\defval{d12_ref4_n12_exact}{32914.55}
\defval{d12_ref4_n12_kept_pct}{4.0}
\defval{d12_ref4_n12_kept_final_pct}{0.68}
\defval{d12_ref4_n12_kept}{18\,977}
\defval{d12_ref4_n12_kept_final}{3\,224}
\defval{d12_ref4_n12_patterns}{477\,050}
\defval{d12_ref4_n12_root_s}{25}
\defval{d12_ref4_n12_tables_s}{10}
\defval{d12_ref4_n12_tree_s}{0.74}
\defval{d12_ref4_n12_nodes}{1}
\defval{d12_ref4_n12_wall_s}{48}
\defval{d12_ref4_n12_work}{0.48}
\defval{d12_ref4_n12_root}{33008.20}
\defval{d12_ref4_n12_root_gap_pct}{0.285}
\defval{d12_ref4_n12_gap_pct}{0.0004}
\defval{d12_ref4_n12_maxrss_gb}{1.3}
\defval{d12_ref4_n14_runs}{5}
\defval{d12_ref4_n14_certified}{5}
\defval{d12_ref4_n14_exact}{38289.79}
\defval{d12_ref4_n14_kept_pct}{9.0}
\defval{d12_ref4_n14_kept_final_pct}{1.9}
\defval{d12_ref4_n14_kept}{271\,906}
\defval{d12_ref4_n14_kept_final}{57\,285}
\defval{d12_ref4_n14_patterns}{3\,035\,916}
\defval{d12_ref4_n14_root_s}{191}
\defval{d12_ref4_n14_tables_s}{72}
\defval{d12_ref4_n14_tree_s}{74.7}
\defval{d12_ref4_n14_nodes}{666}
\defval{d12_ref4_n14_wall_s}{395}
\defval{d12_ref4_n14_work}{138}
\defval{d12_ref4_n14_root}{38455.76}
\defval{d12_ref4_n14_root_gap_pct}{0.433}
\defval{d12_ref4_n14_gap_pct}{0.0004}
\defval{d12_ref4_n14_maxrss_gb}{8.3}
\defval{heur_width}{4}
\defval{heur_width_w}{four}
\defval{heur_win_rows}{8}
\defval{heur_win_rows_w}{eight}
\defval{heur_win_cols}{8}
\defval{heur_win_cols_w}{eight}
\defval{heur_stride}{2}
\defval{heur_stride_w}{two}
\defval{heur_tl}{1\,500}
\defval{heur_topk}{8}
\defval{heur_topk_w}{eight}
\defval{heur_kicks}{100}
\defval{heur_kicks_w}{a hundred}
\defval{heur_kick}{4}
\defval{heur_kick_w}{four}
\defval{heur_patience}{30}
\defval{heur_patience_w}{thirty}
\defval{heur_rounds}{20}
\defval{heur_rounds_w}{twenty}
\defval{heur_starts}{3}
\defval{heur_starts_w}{three}
\defval{heur_gputol}{0.05}
\defval{tol_sep}{$10^{-6}$}
\defval{tol_fix}{$10^{-6}$}
\defval{heur_miss_n16_uniform_pct}{0.8}
\defval{heur_miss_n16_p60_pct}{3.7}
\defval{heur_miss_n16_p40_pct}{10}

\newtheorem{proposition}{Proposition}
\newtheorem{lemma}{Lemma}
\newcommand{\Nmax}{N_{\max}}

\title{A Dantzig--Wolfe reformulation of the EndoWake model for optimal wind farm layouts}
\author{Martina Fischetti\thanks{Instituto de Matem\'aticas de la Universidad de Sevilla
  (IMUS), Universidad de Sevilla, Seville, Spain. ORCID 0000-0002-7673-6917. Corresponding
  author: mfischetti@us.es.}
  \and Matteo Fischetti\thanks{Department of Information Engineering, University of Padova,
  Via Gradenigo 6/B, 35131 Padova, Italy. ORCID 0000-0001-6601-0568. Email:
  matteo.fischetti@unipd.it.}}
\date{October 2026}

\begin{document}
\maketitle

\begin{abstract}
Wind Farm Layout Optimization places turbines to maximize
the expected energy under a wind rose, each turbine lowering the wind downwind. In
the EndoWake model the wind field is a variable, propagated
by linear inequalities, and layouts are certified by branch-and-check.
On a ladder of grids it certifies the $10\times10$ grid and leaves the larger grids
open after one hour. We tighten the relaxation by a convexification computed rather than
derived: every pattern of turbines inside a small tile of the grid is
enumerated once with its power and enters the master as a column.
The device is valid because the wind field is antitone, and exact, for the cubic
power curve, with a Benders cut at integer layouts. Reduced-cost fixing and propagation
between overlapping tiles shrink the master before the tree search, and the tree repeats the
fixing at every improving incumbent. The incumbent comes from a tile heuristic or an LP dive. On the same computer as the branch-and-check,
the $12\times12$ and $14\times14$ grids close in
\val{cor2_n12_full_wall_s} and \val{cor2_n14_full_wall_s} seconds. The $16\times16$ grid is
certified on five seeds from random starts, one job per seed, the heuristic on a GPU, in
\val{honest_n16_uniform_wall_s} seconds under the uniform rose and
\val{honest_n16_p60_wall_s} seconds under a non-uniform one. Times are medians. A pre-specified test set of
\val{panel_instances} unseen instances with forbidden cells confirms the advantage in the
axial setting. Certified means
closed at the solver's gap of $10^{-4}$: the optimum lies within it plus a residual
of at most \val{shift_resid_max_pct_up}\,\%.
\end{abstract}

\noindent\textbf{Keywords:} OR in energy; wind farm layout optimization; wake modeling;
Dantzig--Wolfe reformulation; reduced-cost fixing.

\section{Introduction}\label{sec:intro}

The \emph{Wind Farm Layout Optimization} (WFLO) problem asks where to place up to a
prescribed number of turbines inside an admissible region so as to maximize the expected energy production
under a wind rose; see \citet{fischetti2019overview} for an overview of its many variants, and
\citet{fischetti2026endowake} for an extensive review of the literature.
The difficulty is the wake: a turbine downwind of another sees a reduced wind speed. Since
power grows with the cube of the speed in the sub-rated regime, the energy of a layout
is a non-separable set function of the chosen sites. The classical mathematical-programming
approaches discretize the region into candidate sites and precompute, by an analytical wake
model, how each candidate turbine slows down the
others~\citep{donovan2005,archer2011,turner2014,fischetti2016proximity,ulku2019}. On these
pairwise coefficients, \citet{senderovich2022} build quadratic and QUBO formulations, and
\citet{pedersen2026} optimize placement and cable routing together in an exact framework.
Other models go beyond pairwise losses: \citet{zhang2014} capture the nonlinear interaction
of multiple wakes, and \citet{perezrua2023} give a wind-speed variable to every candidate
site, with a stepwise power curve. \citet{kuo2016} update the wake
coefficients by CFD simulations between successive MIPs, outside the formulation.
The \emph{EndoWake} model, which we introduced in \citet{fischetti2026endowake}, makes the
flow field itself a variable: the wind speed at every cell of a grid, empty or not, is a
decision variable, linked to its upwind neighbors by linear propagation inequalities. The same mechanism then yields the linear relaxation,
the valid inequalities and the exact evaluation of a layout. The layouts that EndoWake
certifies are optimal on the exact energy of the model, which is the quantity we shall
always refer to.

Unfortunately, certifying a layout turned out to be hard. The linear relaxation of the
model as written carries no information, and the band inequalities proposed with
it repair it only in part. The EndoWake paper solves the model by
branch-and-check: one hypograph variable per cell and wind direction, integer layouts
evaluated by an exact linear-time oracle, and problem-specific upwind cuts as optimality
cuts. Its ladder is the square grids from $10\times10$ to $16\times16$, in steps of two. The
scheme certifies the $10\times10$ grid in \val{ejorpub_n10_t_s} seconds and none of the
larger ones. After one hour, in the runs published there, the $12\times12$ and the $14\times14$ grids stand at gaps of
\val{ejorpub_n12_gap_pct}\,\% and \val{ejorpub_n14_gap_pct}\,\%, and the $16\times16$ at
\val{ejorpub_n16_gap_pct}\,\%. Note that good layouts are not the issue, as the EndoWake
paper observes: a greedy construction followed by local search produces them in seconds, and the
tile heuristic of Section~\ref{sec:heur} reaches the optimum of the $16\times16$ grid. The
issue is the dual bound, which is what a certificate adds to a good layout: for a fixed
model, it bounds the energy that any further search can gain, and it is the reference
against which a heuristic is judged. A good incumbent still counts: it is the value
against which the reduced-cost fixing of Section~\ref{sec:shrink} measures the bound. Given
the optimum as cutoff, the branch-and-check still closes no seed of the $16\times16$ grid in
three hours, and its bound stays \val{ejor16up_cutoff_gap_pct}\,\% above the optimum
(Section~\ref{sec:exp:16}).

We keep the EndoWake model, its instances, its
exact oracle and the frame of its algorithm (a master over the layout and the powers, the
incumbent entered at its exact energy, lazy optimality cuts at integer points). What we
replace is the relaxation and the search, and the branch-and-check is the baseline of
Section~\ref{sec:exp}, run as published. In this paper we investigate how far a certificate can be pushed,
and at what cost, when the relaxation is tightened by a convexification \emph{precomputed} on
small tiles of the grid and the master problem is shrunk before the tree search. The device
works as follows. A tile is a strip
of a few rows (or columns) of the grid. The layouts that fit inside a tile, that we call its
\emph{patterns}, are few enough to be enumerated, and the power of every cell of the
tile under every pattern is tabulated by one batched propagation of the wind field. We then
let the patterns enter the master problem as columns, in the spirit of Dantzig--Wolfe
decomposition: the layout of every tile is a convex combination of its patterns. The
power of each cell of the tile is bounded by the same combination of the pattern powers.
The inequality is valid because the wind field of EndoWake is \emph{antitone}, i.e.,
turbines outside the tile can only lower the power of its cells. The model stays exact
once the convexified power is paired with a Benders cut at integer layouts, which we derive
from the concavity of the projected field in a suitable coordinate. The band inequalities of
EndoWake turn out to be a relaxation of the one-dimensional case of the device, with
the pattern power aggregated over a band.

The coordinate in which the cut is derived is the second idea of the paper. It removes the
power curve from the model and gives it a concave second stage; its gain is in the model
rather than in the running time (Section~\ref{sec:exp:attr}). EndoWake
propagates the wind \emph{speed}, and the power, cubic in the speed below rated, enters the
original model as a piecewise-linear interpolant under SOS2
conditions. The solver then branches on the curve, the field, as a second-stage problem, is
non-concave, and the EndoWake paper finds no LP dual multipliers for a Benders decomposition.
Instead, we propagate the \emph{cube} of the speed, i.e., the power itself. The propagation
of EndoWake is explicit and monotone, so the change of coordinate leaves the planes, the
stencil, the oracle, antitonicity and the upwind cut untouched. The power becomes linear,
with no interpolant and no SOS2 condition, the flow inequality becomes a concave power mean
of order $1/3$, and the field projected on the layout is a concave function of the layout.
A supergradient costs one reverse sweep, the chain rule walked backwards through the planes
as in backpropagation, and the cut at integer layouts is the optimality cut of generalized
Benders decomposition. What is new, to the best of our knowledge, is the use of the cube of
the speed, the natural coordinate of a cubic power curve, to obtain a concave extension of
EndoWake, whose supergradient cuts come from one reverse sweep. The construction rests on a
wind field that is a variable of the model, propagated by inequalities that are concave in
that coordinate.

The columns are many (\val{honest_n16_uniform_patterns} on the $16\times16$ grid), hence
the second half of the paper is about not branching on them. Given an incumbent, the reduced
costs of the root LP remove the columns that cannot enter a better layout. A propagation
between overlapping tiles removes the survivors that have no compatible partner on the shared
cells, once, after the fixing at the root; the reduced-cost test alone is re-applied in the
search tree at every improving incumbent. The
incumbent itself comes from a tile heuristic that evaluates, on a GPU, every pattern of a
tile with the rest of the layout fixed (the GPU evaluates EndoWake layouts for design in
\citet{fischetti2026gpu}; here it serves the certificate). With \val{heur_starts_w} random
starts per run it reached the optimum on all five seeds of the three $16\times16$ roses
(Section~\ref{sec:heur}). An ablation on the ladder attributes the gain to its levers: the sparse
form of the master saves root time and memory, the propagation between tiles shortens the tree.
As to scope, the tiles are tight on the axial wakes only. The pipeline is measured on one
instance family: the grids from $10\times10$ to $16\times16$ of the EndoWake ladder
under uniform and non-uniform axial roses (the fixing alone on
six of them). A test set fixed before any run adds \val{panel_instances} unseen instances up to
$14\times14$, with forbidden cells, two budgets and a non-uniform rose
(Section~\ref{sec:exp:panel}). On a rose of twelve directions (Table~\ref{tab:d12}) it closes the
$10\times10$ grid and not the $12\times12$: the root bound of the oblique propagation is
loose, and Section~\ref{sec:limits} says why.

Each ingredient has a precedent, which we credit below; what is new is how the structure of
EndoWake makes them work together. Antitonicity makes the power inequalities of the tiles
valid, and the cube coordinate gives the concave second stage whose Benders cut recovers,
at integer layouts, the wakes that cross the borders of the tiles. The resulting relaxation is tight enough for the enumeration and reduction of
the columns to close grids that the branch-and-check leaves open. Pattern columns
are the cutting-stock columns of \citet{gilmore1961}, a Dantzig--Wolfe decomposition
\citep{dantzig1960} applied to small blocks; here the blocks overlap and are glued by the
site variables, as in the arrowhead form of Dantzig--Wolfe reformulation, where the linking
variables stay in the master \citep{bergner2015}. With one copy of $x$ per block the same
master is the Lagrangean decomposition of \citet{guignard1987}. Each block is a relaxation
of the model, exact only together with a cut at integer points. Computing the hull of a
small substructure by enumeration is also the idea of local cuts
\citep{chvatal2013,buchheim2008}, which separate its facets; we keep the hull in extended
form, so that reduced costs can fix its columns.
Enumerating the feasible configurations of a few cells of a grid, with clique inequalities
on those that cannot coexist, is how \citet{goycoolea2005} formulate harvest scheduling
under maximum-area restrictions; our columns carry a power inequality per cell and
direction, not a packing constraint. Enumerating the Dantzig--Wolfe columns, discarding
those whose reduced cost exceeds the gap to an incumbent and handing the survivors to a MIP
solver is the route enumeration of exact vehicle routing. It is the set-partitioning line of
Mingozzi and coauthors since \citet{bianco1994}, in the form of \citet{baldacci2008} and
\citet{baldacci2011}. \citet{irnich2010} eliminate arcs by the reduced costs of paths,
\citet{focacci1999} filter domains by the reduced costs of optimization relaxations, and
\citet{yang2025} fixes columns by Lagrangian underestimates. What the tiles add is the propagation between overlapping blocks, a device
the routes do not have: two routes of a solution share no customer, two tiles share their
overlap. Overlapping blocks are the setting of the consistency cuts of \citet{clausen2022},
equations that match the column mass of two blocks on each assignment of their shared
variables; our propagation is their support form, applied once after the fixing
(Section~\ref{sec:shrink}). The refixing at improving incumbents is standard reduced-cost fixing, with the root
reduced costs and the value of the new incumbent; \citet{yang2025} develops stronger forms of
repeated fixing, based on Lagrangian underestimates and on the choice of the dual solution. The Benders cut is the optimality cut of
generalized Benders decomposition \citep{geoffrion1972}, an outer approximation of a
concave value function in the sense of \citet{duran1986}, and the cube coordinate is what
makes the value concave. The upwind cut of the branch-and-check is its logic-based
form \citep{hooker2003}.

Our main contributions are the following.
\begin{itemize}
\item A Dantzig--Wolfe reformulation of EndoWake on tiles. The patterns of each tile enter
  the master as columns, and their power inequalities, valid by antitonicity, are tabulated
  by one batched propagation of the wind field. The band inequalities of
  EndoWake are a relaxation of its one-dimensional case (Section~\ref{sec:tiles}).
\item The cube coordinate. The power becomes linear, with no SOS2 condition, and the field
  projected on the layout becomes concave. The Benders cut at integer layouts, computed by
  one reverse sweep, keeps the master exact for the unshifted cubic program of
  Section~\ref{sec:model:cube}; the implementation adds a small shift, whose residual we
  measure (Section~\ref{sec:exp:setup}).
\item A shrinking of the master: reduced-cost fixing at the root and at every improving
  incumbent, a propagation between overlapping tiles, and a tile heuristic on the GPU for
  the incumbent (Sections~\ref{sec:shrink} and~\ref{sec:heur}).
\item On the computers of the branch-and-check, the pipeline certifies every seed of the
  $12\times12$ and $14\times14$ grids, which that code leaves open after one hour. On a node
  with a GPU it certifies the $16\times16$ grid in a single job per seed, which the
  branch-and-check leaves open in three hours. A test set fixed before any run confirms the advantage on unseen instances with forbidden
  cells, in the axial setting (Section~\ref{sec:exp}).
\end{itemize}

The paper is organized as follows. Section~\ref{sec:model} recalls the EndoWake model and
derives the Benders cut. Section~\ref{sec:tiles} introduces the pattern columns on tiles,
Section~\ref{sec:shrink} the shrinking of the master and Section~\ref{sec:heur} the tile
heuristic on the GPU. Section~\ref{sec:exp} reports the experiments.
Section~\ref{sec:limits} discusses the scope and the limitations of the approach, and some
conclusions are finally
drawn in Section~\ref{sec:concl}.

\section{The EndoWake model and the Benders cut}\label{sec:model}

We state the instance and the model of \citet{fischetti2026endowake} in the form used
below, then rewrite the propagation in the cube of the wind speed, where the field is a
concave function of the layout, and derive the Benders cut from concavity.

\subsection{The instance and the model}\label{sec:model:inst}

\paragraph{The instance.} The site is an $n\times n$ grid of square cells of side
$\Delta = D$, the rotor diameter of the turbine ($D = \val{inst_D_m}$ m). Every cell may host
a turbine, two turbines must be at least $\val{inst_sep_D}D$ apart, and at most
$\Nmax = n + \val{inst_Nmax_plus}$ turbines are installed, at no cost. The wind rose has four
axial directions (north, east, south and west) with probabilities $p_s$, equal in the ladder
and unequal in Section~\ref{sec:exp}, all at the free-stream
speed $V = \val{inst_V_ms}$ m/s. The turbine has the ideal cubic power curve
$P(w) = \kappa w^3$ between the cut-in speed of \val{inst_vcutin_ms} m/s and the rated speed
of \val{inst_vr_ms} m/s, zero below cut-in and $P_{\max} = \val{inst_Pmax_kW}$ kW above
rated, with $\kappa = \val{inst_kappa}$ kW/(m/s)$^3$. Since $V$ is below the rated speed, the
plateau never binds, and $P(V) = \val{inst_PV_kW}$ kW is the power of an undisturbed
machine. The objective below is an expected power in kW; the energy over a period differs
from it by a constant factor, and we say energy for short. The wake kernel is the calibrated one of \citet{fischetti2026endowake}: a turbine
leaves the fraction $\rho = \val{inst_rho}$ of the speed to the cell directly downwind of it,
and the deficit spreads sideways and recovers with the three parameters
$(\alpha,\beta,\gamma) = (\val{inst_alpha}, \val{inst_beta}, \val{inst_gamma})$ that enter
the propagation below. We use the grids from $10\times10$ to $16\times16$ of the ladder of
\citet{fischetti2026endowake}, which runs from $6\times6$ to $24\times24$: its
branch-and-check leaves the $20\times20$ and $24\times24$ grids open at
\val{ejorpub_n20_gap_pct}\,\% and \val{ejorpub_n24_gap_pct}\,\%, and we do not attempt them
(Section~\ref{sec:limits}).

\paragraph{The model.} A binary $x_c$, $c \in \mathcal{C}$, selects the cells that receive a
turbine; for every wind direction $s$ a continuous $w^s_c \in [0,V]$ is the wind speed at $c$
and $z^s_c \in [0, P(V)]$ the power credited to $c$. For direction $s$ the cells are grouped
into propagation planes orthogonal to the wind and swept from upwind to downwind. Every cell
$c$ reads the \val{inst_stencil} cells $\mathrm{up}_s(c,m)$, $m = -2,\dots,2$, of the
preceding plane at transversal offsets $m$ (the free stream $V$ for those outside the grid),
and $\mathcal{F}_s(c)$ is the cell directly downwind of $c$. The model then reads as follows:
\begin{align}
\max\ & \textstyle\sum_{s} p_s \sum_{c} z^s_c \notag\\
\text{s.t.}\quad
& w^s_c \le \gamma V + (1-\gamma) \textstyle\sum_{m=-2}^{2} K_m\, w^s_{\mathrm{up}_s(c,m)},
& & c \in \mathcal{C},\ s \in \mathcal{S}, \label{eq:flow}\\
& w^s_{c'} \le \rho\, w^s_c + V\,(1-x_c),
& & c' \in \mathcal{F}_s(c),\ s \in \mathcal{S}, \label{eq:wake}\\
& z^s_c \le P(w^s_c), \quad z^s_c \le P(V)\, x_c,
& & c \in \mathcal{C},\ s \in \mathcal{S}, \label{eq:power}\\
& \textstyle\sum_{c \in Q} x_c \le 1 \ \ (Q \in \mathcal{Q}), \quad
  \textstyle\sum_{c} x_c \le \Nmax, \quad x_c \in \{0,1\}, \label{eq:layout}
\end{align}
where $K = (\beta/2, \alpha/2, 1-\alpha-\beta, \alpha/2, \beta/2)$ is non-negative and sums
to one, and $\mathcal{Q}$ is the family of $3\times3$ windows of the grid (two cells of a
window are closer than $3D$). The flow inequalities \eqref{eq:flow} cap the speed at a cell by
a convex combination of the free stream and of the speeds one plane upwind. The wake
inequalities \eqref{eq:wake} let an installed turbine hand the fraction $\rho$ of its speed to
the cell behind it, and are void otherwise. In the original model the power curve
of \eqref{eq:power} is piecewise linear with SOS2 conditions; here the curve is used exactly,
through the oracle below and through the cube coordinate of Section~\ref{sec:model:cube}.
The cap $P_{\max} x_c$ of the original model becomes $P(V)\,x_c$, the cap of its
master, valid since $V$ is below rated.

For a layout $x$ and a direction $s$, let $\Phi_x(w)$ be the vector of the smallest right-hand
sides that \eqref{eq:flow}--\eqref{eq:wake} impose on each cell, so that the feasible fields
are $\{w \in [0,V]^{\mathcal{C}} : w \le \Phi_x(w)\}$. Everything below rests on the following
property, restated from \citet[Prop.~1]{fischetti2026endowake} because the pattern columns
are valid by antitonicity, its part~(d) and part~(c) here.

\begin{proposition}[Maximal field]\label{prop:field}
For every $x \in [0,1]^{\mathcal{C}}$ and every direction $s$: (a) the feasible fields have a
greatest element $w^*(x)$, computed by one forward sweep of the planes with equality in
$w = \Phi_x(w)$; (b) every objective non-decreasing in $w$ attains its maximum over the
feasible fields at $w^*(x)$; (c) \emph{(antitonicity)} $x' \ge x$ componentwise implies
$w^*(x') \le w^*(x)$ componentwise.
\end{proposition}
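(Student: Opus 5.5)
The plan is to fix a direction $s$ and a layout $x\in[0,1]^{\mathcal{C}}$, and work with the map $\Phi_x$ one plane at a time, using two structural facts. The first is \emph{triangularity}: the right-hand side for a cell $c$ involves only cells of the preceding plane. In \eqref{eq:flow} these are the stencil cells $\mathrm{up}_s(c,m)$, and in \eqref{eq:wake} they are the cells $c''$ with $c\in\mathcal{F}_s(c'')$; cells outside the grid are replaced by the constant $V$. The second is \emph{monotonicity}: each right-hand side is non-decreasing in $w$, because $K\ge 0$, $1-\gamma\ge 0$ and $\rho\ge 0$. It is also non-increasing in $x$, because $V(1-x_{c''})$ decreases in $x_{c''}$. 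Componentwise, $\Phi_x(w)$ is the minimum of these right-hand sides, capped at $V$ by the box $w\le V$. A minimum of monotone functions keeps the same monotonicity, so $\Phi_x(w)$ is non-decreasing in $w$ and non-increasing in $x$.

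For part (a), I will define $\bar w$ by the forward sweep: $\bar w_c=\Phi_x(\bar w)_c$, evaluated plane by plane from upwind to downwind. This is well defined by triangularity. Each value lies in $[0,V]$, since the flow bound is a convex combination of $V$ and values in $[0,V]$, and the wake bound is non-negative. Hence $\bar w$ is feasible. For maximality, let $w$ be any feasible field and argue by induction on the planes that $w\le\bar w$. On the first plane, $w_c\le\Phi_x(w)_c$ depends only on the free stream and therefore equals $\bar w_c$. If $w\le\bar w$ holds on the planes before plane $k$, then monotonicity in $w$ gives $w_c\le\Phi_x(w)_c\le\Phi_x(\bar w)_c=\bar w_c$ for every cell $c$ of plane $k$. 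So $\bar w=w^*(x)$ is the greatest element, and it satisfies $w^*=\Phi_x(w^*)$.

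Part (b) then follows directly. If $f$ is non-decreasing in $w$, then $f(w)\le f(w^*(x))$ for every feasible $w$, and $w^*(x)$ is itself feasible.

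For part (c), take $x'\ge x$ and run the same plane induction on the two sweeps. On the first plane, $w^*(x')_c=\Phi_{x'}(\cdot)_c\le\Phi_x(\cdot)_c=w^*(x)_c$, because on this plane the right-hand sides involve only $V$ and the wake terms $V(1-x_{c''})$, which are non-increasing in $x$. For the inductive step, monotonicity in $w$ and then in $x$ give
\[
w^*(x')_c=\Phi_{x'}(w^*(x'))_c\le\Phi_{x'}(w^*(x))_c\le\Phi_x(w^*(x))_c=w^*(x)_c .
\]
An alternative route is to observe that $w^*(x')$ is feasible for $x$, because $\Phi_{x'}\le\Phi_x$, and then apply maximality from (a); this is shorter.

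The main obstacle is verifying triangularity cleanly: every cell read by \eqref{eq:flow} or \eqref{eq:wake} for a cell $c$ must lie in the preceding plane, or be outside the grid and replaced by $V$. In particular, $\mathcal{F}_s(c)$ must be exactly one plane downwind. I would state this explicitly from the definition of the planes. Once it is in place, the rest is a routine monotone induction.
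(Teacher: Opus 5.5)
Your proof is correct and follows the paper's argument. The paper uses the same ingredients: monotonicity of $\Phi_x$ in $w$, antitonicity in $x$, a well-defined sweep because each cell reads only strictly upwind cells, and induction along the planes for both (a) and (c). The differences are cosmetic: in (c) you apply monotonicity in $w$ before antitonicity in $x$, whereas the paper writes $\Phi_{x'}(w^*(x'))_c \le \Phi_x(w^*(x'))_c \le \Phi_x(w^*(x))_c$, and your shortcut (feasibility of $w^*(x')$ for $x$ together with the maximality of (a)) is equally valid.
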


\begin{proof}
$\Phi_x$ is monotone in $w$ (every right-hand side has non-negative coefficients on $w$) and
antitone in $x$ at fixed $w$ (the layout enters only through $V(1-x_c)$). Since a cell reads
only cells strictly upwind, the sweep is well defined and the induction along the planes gives
$w \le \Phi_x(w) \le \Phi_x(w^*) = w^*$ for every feasible $w$, which is (a). Part (b) is
immediate. For (c), at a cell $c$ with the claim holding upwind, $w^*(x')_c = \Phi_{x'}(w^*(x'))_c \le
\Phi_x(w^*(x'))_c \le \Phi_x(w^*(x))_c = w^*(x)_c$.
\end{proof}

Part (a) is also the evaluator: one sweep per direction, five multiply--add operations and one
comparison per cell, plus a power-curve lookup per installed cell, returns the exact energy of
an integer layout $X = \{c : x_c = 1\}$, namely $\sum_s p_s \sum_{c \in X} p^s_c(X)$ with
$p^s_c(X) = P(w^{*s}_c(X))$. Every energy quoted in this paper is recomputed by this evaluator,
cut-in included.

\paragraph{The master.} The branch-and-check scheme of EndoWake keeps
the layout variables $x$, the power variables $z$, the constraints \eqref{eq:layout}, the
caps $z^s_c \le P(V)\,x_c$ and a class of so-called \emph{band inequalities}
(Section~\ref{sec:tiles:col}), kept static, and drops the field. At every integer solution $\bar x$ of the
master with support $X$, the oracle computes $p^s_c(X)$ and every cell with
$\bar z^s_c > p^s_c(X)$ receives an optimality cut. The layout is posted to the solver at
its exact energy, hence pruning acts on the exact scale from both sides. The optimality cut
of the branch-and-check is the so-called \emph{upwind cut}
\begin{equation}\label{eq:upwind}
z^s_c \le p\, x_c + \bigl(P(V) - p\bigr) \textstyle\sum_{u \in U} (1 - x_u),
\end{equation}
with $U \subseteq X \setminus \{c\}$ and $p$ the exact power of $c$ when turbines stand on
$U \cup \{c\}$ and nowhere else. The cut is valid by antitonicity. Note that without the band
inequalities the master has the vacuous bound $P(V)$ per installed cell. Everything that tightens it (the band inequalities
and the pattern columns of Section~\ref{sec:tiles} alike) is
a valid inequality on $z$ in terms of $x$, and the exactness of the scheme rests on the cuts
added at integer points.

\subsection{The cube coordinate}\label{sec:model:cube}

We next rewrite the propagation in the cube of the speed, i.e., in the power itself.
Since the propagation is explicit and $w \mapsto w^3$ is increasing, the planes, the
stencil, the oracle and Proposition~\ref{prop:field} are untouched, and only the shape of
the inequalities changes. Fix a direction and drop its index. Put $q_c = (w_c/V)^3 \in [0,1]$. Dividing
\eqref{eq:flow} by $V$ and cubing gives
\begin{equation}\label{eq:mean}
q_c \le M_c(q) := \Bigl( a_0 + \textstyle\sum_m a_m\, q_{\mathrm{up}(c,m)}^{1/3} \Bigr)^{3},
\qquad a_m = (1-\gamma)K_m, \quad a_0 + \textstyle\sum_m a_m = 1,
\end{equation}
where $a_0$ is $\gamma$ plus the weights of the neighbors outside the grid. Note that $M_c$ is
the weighted power mean of order $1/3$ of the numbers $1$ and $q_{\mathrm{up}(c,m)}$.

\begin{lemma}\label{lem:mean}
$M_c$ is concave and non-decreasing on the non-negative orthant.
\end{lemma}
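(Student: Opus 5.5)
Monotonicity is the easy half, and I would dispose of it first. Every weight $a_m$ is non-negative, $t\mapsto t^{1/3}$ is non-decreasing on $[0,\infty)$, and $u\mapsto u^3$ is non-decreasing on $[0,\infty)$. The inner sum $a_0+\sum_m a_m q_{\mathrm{up}(c,m)}^{1/3}$ is non-negative on the orthant. Hence $M_c$ is a composition of non-decreasing maps and is itself non-decreasing in each coordinate.

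For concavity I would not differentiate. Instead I would use the classical fact that a weighted power mean of order $r\le 1$, viewed as a function of its arguments, is concave on the non-negative orthant. The constant argument $1$ with weight $a_0$ is harmless: it makes $M_c$ the restriction of a concave function of $(1,q)$ to an affine slice, and restrictions of concave functions to affine sets stay concave. To keep the argument self-contained, I would prove the needed case $r=1/3$ directly by showing that the function is positively homogeneous of degree one and superadditive.

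\emph{Homogeneity.} Set $G(y_0,y)=\bigl(a_0 y_0^{1/3}+\sum_m a_m y_m^{1/3}\bigr)^3$ on $\mathbb{R}^{1+5}_{\ge 0}$. Then $G(\lambda y_0,\lambda y)=\lambda\,G(y_0,y)$ for $\lambda\ge0$.

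\emph{Superadditivity.} We need $G(y+y')\ge G(y)+G(y')$. Write $u_j=y_j^{1/3}$ and $v_j=y_j'^{1/3}$, and let $\|\cdot\|$ denote the weighted $\ell^{1/3}$ quasi-norm, so that $G(y)^{1/3}=\sum_j a_j u_j$ and $G(y')^{1/3}=\sum_j a_j v_j$. The inequality $(u^3+v^3)^{1/3}\ge$ (is concave-type) is the reverse Minkowski inequality for exponent $p=1/3<1$:
\[
\Bigl(\textstyle\sum_j a_j (y_j+y_j')^{1/3}\Bigr)^3\ \ge\ \Bigl(\sum_j a_j y_j^{1/3}\Bigr)^3+\Bigl(\sum_j a_j y_j'^{1/3}\Bigr)^3 .
\]
I would derive it from Hölder's inequality in the standard way. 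Alternatively, I would use the elementary route: the map $(s,t)\mapsto (s^{1/3}+t^{1/3})^3$ is concave and homogeneous, which can be checked on the line $t=1$ where it is a one-variable function, and then apply induction on the number of terms.

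\emph{Conclusion.} A homogeneous, superadditive function on a convex cone is concave. For $\theta\in[0,1]$,
\[
G(\theta y+(1-\theta)y')\ \ge\ G(\theta y)+G((1-\theta)y')=\theta G(y)+(1-\theta)G(y').
\]
Fixing $y_0=1$ then gives the concavity of $M_c$.

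The main obstacle is the reverse Minkowski step; everything else is bookkeeping. If I prefer to avoid citing it, the fallback is a Hessian check of the one-variable function $f(t)=(1+t^{1/3})^3$. Its derivatives are $f'(t)=(1+t^{1/3})^2t^{-2/3}=(t^{-1/3}+1)^2$, which is decreasing, so $f$ is concave. The two-variable homogeneous function $(s,t)\mapsto s\,f(t/s)$ is then concave as the perspective of a concave function. Induction over the terms, by grouping $a_0y_0^{1/3}+\dots$ pairwise and noting that a non-decreasing concave function of concave functions is concave, extends this to all six arguments.
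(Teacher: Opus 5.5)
Your proof is correct, but it reaches concavity by a different route from the paper's.

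\textbf{The paper's route.} The paper observes that $h(t,q)=(a_0t^{1/3}+\sum_m a_m q_m^{1/3})^3$ is non-negative and positively homogeneous of degree one. Its upper level sets are those of the concave sum of cube roots, so $h$ is quasi-concave. The paper then invokes the fact that a non-negative, degree-one homogeneous, quasi-concave function on a convex cone is concave, and restricts to $t=1$.

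\textbf{Your main route.} You prove homogeneity plus superadditivity and conclude with the two-line argument $G(\theta y+(1-\theta)y')\ge G(\theta y)+G((1-\theta)y')$, which is sound. The difference is where the analytic content sits. The paper's only analytic input is the concavity of $a_0t^{1/3}+\sum_m a_m q_m^{1/3}$, which is immediate, and the work goes into a structural lemma. Your main route makes the structural step elementary but delegates the work to the reverse Minkowski inequality for $p=1/3$. That inequality is exactly the superadditivity of $G$, so citing it is close to citing the conclusion. In fact the paper's lemma, unpacked, proves your superadditivity step. Normalize $y/G(y)$ and $y'/G(y')$ to level $1$, and apply quasi-concavity to their convex combination with weights $G(y)/(G(y)+G(y'))$ and $G(y')/(G(y)+G(y'))$. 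This gives $G(y+y')\ge G(y)+G(y')$.

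\textbf{Your fallback.} This is the most self-contained of the three arguments. The derivative $f'(t)=(t^{-1/3}+1)^2$ is decreasing, and the perspective $s f(t/s)=(s^{1/3}+t^{1/3})^3$ is concave, extended to the boundary of the orthant by continuity. Put $S_k=(\sum_{j\le k}a_jy_j^{1/3})^3$ and $H_2(s,t)=(s^{1/3}+t^{1/3})^3$. The recursion $S_k=H_2(S_{k-1},a_k^3y_k)$, with $H_2$ concave and non-decreasing, gives concavity of $G$ directly. In that branch, homogeneity and superadditivity are not needed at all, and the argument is the same composition rule the paper uses for Proposition~\ref{prop:concave}.

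\textbf{Minor.} Your sentence about $(u^3+v^3)^{1/3}$ is garbled, but the displayed inequality is the right one.
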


\begin{proof}
$h(t,q) = (a_0 t^{1/3} + \sum_m a_m q_m^{1/3})^3$ is non-negative and positively homogeneous
of degree one on the non-negative orthant, and its upper level sets are those of the concave
function $a_0 t^{1/3} + \sum_m a_m q_m^{1/3}$, so $h$ is quasi-concave. A non-negative,
degree-one homogeneous, quasi-concave function on a convex cone is concave, and $M_c$ is the
restriction of $h$ to $t = 1$. Monotonicity is clear.
\end{proof}

In the implementation $M_c$ is evaluated at $q + \varepsilon$, every coordinate shifted by
$\varepsilon = \val{shift_eps}$, so that its supergradient stays finite at $q = 0$. We write
$\tilde M_c$ for the shifted mean and $\tilde q(x)$ for the field it propagates. Since
$\tilde M_c \ge M_c$ and both are non-decreasing, $\tilde q(x) \ge q(x)$ at every layout, by
induction along the planes: the shift is a relaxation of the model, a parameter of the
implementation and not a tolerance. The tables of Section~\ref{sec:tiles} and the cuts of
Section~\ref{sec:model:cut} are computed on $\tilde q$, and every bound they prove is valid
for $q$. At an integer layout that the solver finds, the master credits cell $c$ a value close
to $\kappa V^3 \tilde q_c$. The single-source chords of Section~\ref{sec:model:cut}, computed
on the unshifted field, can cap it below that value, and the tolerances of the separation can
let it exceed it slightly. Section~\ref{sec:exp:setup} measures the credited value against
the exact power and against the shifted field.

At a binary $x_c$ the wake inequality \eqref{eq:wake} is equivalent to the linear inequality
$q_{c'} \le \rho^3 q_c + (1 - x_c)$, and the cubic power is linear in $q$: for $w \le V$ one
has $P(w) \le \kappa w^3 = \kappa V^3 q$, with equality above cut-in, so
$z_c \le \kappa V^3 q_c$ is valid together with $z_c \le P(V)\,x_c$. Hence EndoWake written
in $q$ is a mixed-integer convex program, which we call the \emph{cubic program}, with
concave propagation inequalities, linear
wake inequalities and linear power, and no piecewise-linear error. It is exact up to the
cut-in jump, which it drops: a cell below cut-in is credited $\kappa w^3$ instead of zero.
Every bound the cubic program proves is therefore valid for the true curve, and a certificate
transfers to the true curve as soon as no installed cell of the certified layout is below
cut-in in any direction. Note that good layouts tend to avoid this situation, since an
installed cell below cut-in produces no energy in that direction. On the layouts stored by the campaigns of
Sections~\ref{sec:exp} and~\ref{sec:limits} this does not happen. We evaluated
\val{cutin_layouts} distinct layouts on the grids $n\times n$, $n = \val{cutin_grids}$, with
the reference code of EndoWake: incumbents, optima and final layouts of
the tree, \val{cutin_d12_layouts} of them on the rose of twelve directions. The smallest
speed seen by an installed cell is \val{cutin_min_speed_ms} m/s, against a cut-in of
\val{cutin_vcutin_ms} m/s (\val{cutin_cells_below} cells below it). This is evidence on the
stored layouts, not a proof over all layouts; the separation, which keeps the nearest upwind
turbine three cells away, is what makes it plausible. The advantage of the coordinate is
threefold. There is no SOS2 condition and no branching on the curve; an integer optimum
certifies the energy of the model, up to the shift, and not an SOS2 surrogate; and the
second stage is concave, so that generalized Benders
decomposition applies (Section~\ref{sec:model:cut}). The cube is the smallest exponent that
works, and the only one with a linear power.
With $q = w^r$ the propagation is a power mean of order $1/r$, concave for $r \ge 1$, while
the power $\kappa q^{3/r}$ is concave only for $r \ge 3$, and linear only for $r = 3$. We do
not solve the cubic program as such, and not only because the field variables are its
cost. Its continuous relaxation is vacuous at the uniform point $\bar x_c = \Nmax/n^2$ of
the EndoWake paper (feasible for $n \ge 11$; at $10\times10$ the midpoint of two
disjoint feasible layouts does the same): every wake inequality then releases to
$q \ge 1 - \bar x_c \ge \bar x_c$, and $z_c = P(V)\,\bar x_c$ is feasible. A Benders cut
separated at fractional points of this relaxation, the cubic program without the pattern
columns of Section~\ref{sec:tiles}, cannot therefore do better than the trivial bound
$\Nmax P(V)$. We use the coordinate to project the
field out. Extend the sweep of Proposition~\ref{prop:field}(a) to fractional layouts by
setting, plane after plane and for every $x \in [0,1]^{\mathcal{C}}$,
\begin{equation}\label{eq:qproj}
q_c(x) = \min\Bigl\{ 1,\ M_c\bigl(q(x)\bigr),\
\min_{u : c \in \mathcal{F}(u)} \bigl[\rho^3 q_u(x) + 1 - x_u\bigr] \Bigr\}.
\end{equation}

\begin{proposition}\label{prop:concave}
For every cell $c$, the map $x \mapsto q_c(x)$ is concave and non-increasing on
$[0,1]^{\mathcal{C}}$, and at every binary $x$ it equals $(w^*_c(x)/V)^3$.
\end{proposition}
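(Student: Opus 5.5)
We argue by induction along the propagation planes, from upwind to downwind, exactly as in the proof of Proposition~\ref{prop:field}. The hypothesis is that every cell of the preceding planes has $x \mapsto q_u(x)$ concave and non-increasing on $[0,1]^{\mathcal{C}}$. For the base case we note that the cells of the first plane read only the free stream, that is, $q = 1$ outside the grid. For those cells $M_c$ is the constant $1$ and there is no upwind turbine, so $q_c(x) = 1$, which is trivially concave and non-increasing.

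\textbf{Inductive step.} Each of the three terms inside the minimum of \eqref{eq:qproj} is concave and non-increasing in $x$.
\begin{itemize}
\item The constant $1$ trivially has both properties.
\item The term $M_c(q(x))$ is the composition of $M_c$, which is concave and non-decreasing on the non-negative orthant by Lemma~\ref{lem:mean}, with the vector map $x \mapsto (q_{\mathrm{up}(c,m)}(x))_m$. That map is componentwise concave and non-increasing by the induction hypothesis. The standard composition rule (a non-decreasing concave function of concave functions is concave) gives concavity. Monotonicity follows because a non-decreasing function of non-increasing arguments is non-increasing. The arguments must lie in the orthant, which holds since $q_u(x) \in [0,1]$: the value $1$ caps it, and nonnegativity follows from $M_c \ge a_0^3 \ge 0$ and $\rho^3 q_u + 1 - x_u \ge 0$.
\item Each term $\rho^3 q_u(x) + 1 - x_u$ is a non-negative multiple of a concave non-increasing function plus an affine function that is non-increasing in $x_u$. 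It is therefore concave and non-increasing.
\end{itemize}
A pointwise minimum of finitely many concave non-increasing functions is again concave and non-increasing, which closes the induction.

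\textbf{Agreement at binary $x$.} Fix a binary $x$ and run the same plane-by-plane induction to show $q_c(x) = (w^*_c(x)/V)^3$. By Proposition~\ref{prop:field}(a), $w^*_c$ is the smallest of three quantities:
\begin{itemize}
\item the bound $V$;
\item the right-hand side of \eqref{eq:flow};
\item the right-hand sides of \eqref{eq:wake} over the turbines $u$ with $c \in \mathcal{F}(u)$.
\end{itemize}
Since $t \mapsto (t/V)^3$ is increasing on $[0,\infty)$, the cube of a minimum is the minimum of the cubes. The cube of the flow term is $M_c$ evaluated at the upwind cubes, by the derivation of \eqref{eq:mean}, and these equal $q_{\mathrm{up}(c,m)}(x)$ by the induction hypothesis. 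For the wake terms we split on the value of $x_u$:
\begin{itemize}
\item If $x_u = 1$, the cube of $\rho w^*_u$ is $\rho^3 q_u$, which matches $\rho^3 q_u + 1 - x_u$.
\item If $x_u = 0$, the wake right-hand side is at least $V$, so its cube is at least $1$. The corresponding term $\rho^3 q_u + 1 \ge 1$ is likewise inactive against the cap $1$, so both sides agree after taking the minimum.
\end{itemize}

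\textbf{Main obstacle.} The delicate point is the second item of the inductive step, applying Lemma~\ref{lem:mean} through composition. We must make sure that the outside-grid neighbours are absorbed into $a_0$, so that $M_c$ depends only on in-grid cells, and that the composition rule is applied with $M_c$ non-decreasing on a set containing the range of $q(x)$. Everything else is bookkeeping. The fact that the wake term with $x_u = 0$ is relaxed rather than exact, and differs from the cube of $w_u \rho + V$, is harmless precisely because the cap $1$ dominates it.
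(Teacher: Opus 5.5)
Your proof is correct and takes the same route as the paper's. The paper also inducts along the planes, composes the concave non-decreasing power mean of Lemma~\ref{lem:mean} with the concave, non-increasing upwind fields, treats each wake cap as affine in $(q_u,x_u)$ with a nonnegative coefficient on $q_u$, takes the minimum, and matches the caps with the cubed wake inequalities at binary $x$. You additionally make explicit two points the paper leaves implicit: that $q_u(x)\in[0,1]$, so the lemma applies on the orthant, and that the cap with $x_u=0$ is dominated by the constant $1$.
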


\begin{proof}
By induction on the planes. On the first plane $q_c \equiv 1$. On a later plane $M_c$ is
concave and non-decreasing (Lemma~\ref{lem:mean}) and is composed with concave functions, so
the composition is concave. Each cap in \eqref{eq:qproj} is affine in $(q_u, x_u)$ with a
positive coefficient on the concave $q_u$, hence concave, and so is their minimum. Monotonicity follows the same induction. At a binary $x$ the caps are
the wake inequalities and the sweep is that of Proposition~\ref{prop:field}(a).
\end{proof}

Antitonicity says that the field can only drop when turbines are added;
Proposition~\ref{prop:concave} adds that it drops concavely, in the cube of the speed. A
supergradient $g$ of $q_c$ at $x$ costs one reverse sweep: the forward sweep records which
term of \eqref{eq:qproj} is active at each cell, and the chain rule walks the planes backwards
from $c$ through the active terms.

\subsection{The Benders cut}\label{sec:model:cut}

At an integer master solution $\bar x$ with support $X$, take a direction and an installed
cell $c$ with $\bar z_c > \kappa V^3 q_c(\bar x)$, and let $g$ be a supergradient of $q_c$ at
$\bar x$. The \emph{Benders cut} is
\begin{equation}\label{eq:tangent}
z_c \le \kappa V^3 \bigl[\, q_c(\bar x) + g^\top (x - \bar x) \,\bigr],
\end{equation}
valid on $[0,1]^{\mathcal{C}}$ by Proposition~\ref{prop:concave} and $P(w) \le \kappa w^3$,
also when $q_c$ is the shifted field $\tilde q_c \ge q_c$ of Section~\ref{sec:model:cube},
on which the implementation separates it, and exact at $\bar x$ for the power
$\kappa V^3 \tilde q_c$ of the cubic program. For a fixed $x$ the field is the subproblem, and
Proposition~\ref{prop:concave} makes $\kappa V^3 q_c$ its concave value. Then
\eqref{eq:tangent} is the optimality cut of generalized Benders decomposition
\citep{geoffrion1972} on that value, one per cell and direction, with the supergradient of
the reverse sweep in the role of the dual multipliers. Both \eqref{eq:tangent} and the upwind cut \eqref{eq:upwind} involve
only the layout variables of the turbines that influence $c$, but they release the bound differently: in
\eqref{eq:upwind} one missing turbine of $U$ releases it entirely, in \eqref{eq:tangent} each
turbine carries its own marginal effect. The support of $g$ also gives a valid $U$ of
\eqref{eq:upwind} for free, usually a small one.

\begin{lemma}\label{lem:support}
Let $U = \{u \in X \setminus \{c\} : g_u \ne 0\}$ be the turbines of $X$ on which the
supergradient $g$ of \eqref{eq:tangent} is nonzero. Then $q_c$ takes the same value on
$U \cup \{c\}$ as on $X$, so the upwind cut \eqref{eq:upwind} with this $U$, whose $p$ is the
power of $c$ with turbines on $U \cup \{c\}$ only, is exact at $\bar x$.
\end{lemma}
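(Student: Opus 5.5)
The plan is to read the supergradient off the reverse sweep and to show that the forward sweep on the reduced layout $U\cup\{c\}$ reproduces, at every cell that feeds $c$ through an active term, the same value as the sweep on $X$. Fix the direction and let $\bar x$ be the indicator of $X$. The forward sweep of \eqref{eq:qproj} at $\bar x$ records at each cell $d$ the active term: the cap $1$, the mean $M_d$, or a wake cap $\rho^3 q_u + 1 - x_u$ for some $u$ with $d\in\mathcal{F}(u)$. The reverse sweep from $c$ propagates nonzero weights only backwards through active terms. Let $A$ be the set of cells reached from $c$ in this way, meaning cells carrying nonzero weight, with $c\in A$. The supergradient then has $g_u = -\sum(\text{weight of } d)$ over the cells $d\in A$ whose active term is the wake cap of $u$. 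The coefficient on $x_u$ in that cap is $-1$, and the weights are positive, because $M_d$ has positive partial derivatives under the shift and $\rho^3>0$. So $g_u\neq 0$ exactly when $u$ is the active wake source of some cell of $A$, and this gives the set $U$, up to $c$ itself.

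Next I compare the two sweeps. Write $x' = \mathbf{1}_{U\cup\{c\}}\le \bar x$. By Proposition~\ref{prop:concave} (antitonicity), $q(x')\ge q(\bar x)$ everywhere. For the reverse inequality on $A$, I induct along the planes over cells $d\in A$ and claim $q_d(x')\le q_d(\bar x)$. If the active term at $d$ is $1$, the claim is trivial. If it is a wake cap from $u$, then $u\in U$ (or $u=c$, which cannot happen for cells upwind of $c$). Hence $x'_u=1$, and the same cap is present at $x'$. Moreover $u$ is itself in $A$, since the wake cap carries positive weight to $q_u$, and $u$ lies on an earlier plane. So $q_d(x')\le \rho^3 q_u(x') \le \rho^3 q_u(\bar x) = q_d(\bar x)$. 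If the active term is $M_d$, every upwind argument $\mathrm{up}(d,m)$ with $a_m>0$ is in $A$ by positivity of the partials. Monotonicity of $M_d$ (Lemma~\ref{lem:mean}) and induction then give $q_d(x')\le M_d(q(x'))\le M_d(q(\bar x)) = q_d(\bar x)$. Applying this at $d=c$ gives $q_c(x')=q_c(\bar x)$.

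The exactness then follows. The $p$ of \eqref{eq:upwind} for this $U$ equals $\kappa V^3 q_c(x')$, at least above cut-in via Proposition~\ref{prop:concave} at binary points. It therefore coincides with the power of $c$ under $X$, and at $\bar x$ the right-hand side of \eqref{eq:upwind} is $p$, since every $u\in U$ has $\bar x_u=1$.

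The main obstacle is the bookkeeping of the reverse sweep, specifically the claim that "nonzero $g_u$" captures every wake source reached. Cancellation cannot occur because all contributions to $g_u$ have the same sign. Zero weights could, however, arise if some $a_m=0$ (as with $\beta=0$ in the instance) or if the unshifted mean has infinite or zero derivatives at $q=0$. I would handle this by defining $A$ through the nonzero weights only and noting that the argument above only needs the arguments with nonzero weight. For $M_d$, the cells omitted from $A$ then enter $M_d$ with zero partial derivative; when that is because $a_m=0$, they do not enter at all. Ties among active terms are handled by fixing the recorded choice of the forward sweep.
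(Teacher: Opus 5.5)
Your proof is correct, but it takes a different route from the paper's. The paper never opens the reverse sweep. With $x'=\mathbf{1}_{U\cup\{c\}}$, the difference $x'-\bar x$ is supported on turbines where $g$ vanishes, so $g^\top(x'-\bar x)=0$. The supergradient inequality (concavity, Proposition~\ref{prop:concave}) then gives $q_c(x')\le q_c(\bar x)$, and antitonicity gives the reverse inequality; that is the whole proof.

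You instead unroll the chain rule. You identify the support of $g$ on $X$ with the active wake sources of the set $A$ of cells that the reverse sweep reaches from $c$. You then show by induction along the planes that this chain of active constraints is still in force at $x'$, using only the monotonicity of $M_d$ and of the wake caps.

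The paper's route buys brevity and generality. It holds for any supergradient of $q_c$ at $\bar x$, whatever the tie-breaking or the element of the superdifferential. It also needs none of your bookkeeping on positive weights, absence of cancellation and coefficients $a_m=0$.

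Yours never uses concavity, only the record of active terms of a monotone min-sweep, so it would carry over to recursions that are not concave. It also proves more: every cell of $A$ keeps its value at $x'$. The price is that it covers only the $g$ produced by the reverse sweep, which is the one the implementation uses.

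Two small points.
\begin{enumerate}
\item When the wake cap of $u$ is active at $d$, you should say why $u\in X$. An empty $u$ gives the cap $\rho^3 q_u+1>1$, since $q>0$ everywhere (because $a_0\ge\gamma>0$), so it is never active. In any case, all the step needs is $x'_u=\bar x_u$.
\item The hedge ``at least above cut-in'' is unnecessary. $q_c(x')=q_c(\bar x)$ means equal speeds at $c$, hence equal powers under any power curve.
\end{enumerate}
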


\begin{proof}
Removing the turbines with $g_u = 0$ gives a binary $x'$ with $g^\top(x' - \bar x) = 0$,
hence $q_c(x') \le q_c(\bar x)$ by concavity and $q_c(x') \ge q_c(\bar x)$ by antitonicity.
\end{proof}

For a single upwind turbine $u$ the upwind cut with $U = \{u\}$ is the chord of the concave
function $x_u \mapsto \kappa V^3 q_c$ between its two integer points (the segment joining its values at $x_u = 0$ and $x_u = 1$), which is tighter than
any tangent of that function. The master therefore also carries the \emph{single-source chords}
$z_c \le \kappa V^3 - [\kappa V^3 - \kappa (V - d_{uc})^3]\, x_u$, the chord of the cubic
power without cut-in, valid since $P(v) \le \kappa v^3$ at every speed. There is one for
every ordered pair whose single-source deficit $d_{uc}$ (the speed lost at $c$ when $u$ alone
is installed) exceeds $10^{-2}$ m/s; they enter as lazy inequalities, computed on the
unshifted field. Against the upwind cut with the
$U$ of Lemma~\ref{lem:support}, on the same harness, the Benders cut is at parity in
time and takes fewer nodes (Section~\ref{sec:exp:attr}). We therefore present it as the
cleaner equivalent of the upwind cut, which is the logic-based Benders cut
\citep{hooker2003} of the same scheme, and not as a speed-up. Its role here is to make the master of
Section~\ref{sec:tiles} exact, whatever columns are added to it.

\section{A Dantzig--Wolfe reformulation on tiles}\label{sec:tiles}

The master of Section~\ref{sec:model} credits every installed cell with $P(V)$ and learns
the wakes one integer point at a time. We now tighten it with a convexification
\emph{computed} on small tiles of the grid: the power of every cell of a tile under every
layout of the tile is tabulated once, and the table enters the master as columns.

\subsection{Tiles, patterns and tables}\label{sec:tiles:tab}

A \emph{tile} $R \subseteq \mathcal{C}$ is a set of cells. We use \emph{strips}: the $b$
consecutive columns (vertical strips) or rows (horizontal strips) of the grid starting at the
offsets $0, k, 2k, \dots$ and at the last offset $n - b$, so that every cell is covered.
Consecutive strips of the same orientation overlap on $b - k$ lines, the last one on more
when $n - b$ is not a multiple of $k$. Vertical strips hold the along-wind interaction of the north and south winds,
horizontal strips that of the east and west winds, and only the lateral spread of a wake
crosses a border. The design of the paper has $b = 4$ and $k = 2$ (written V4s2+H4s2 in what follows): on the
even grids every cell lies in at most two vertical and two horizontal strips, namely,
\val{tiles14_v4s2h4s2_regions} tiles on the $14\times14$ grid (Figure~\ref{fig:tiles}(a)).
On the $17\times17$ grid of Section~\ref{sec:limits}, where $n - b$ is odd, the last offset
puts one line of cells in three strips.
A \emph{pattern} of $R$ is a set $T \subseteq R$ that satisfies the
separation and the budget of \eqref{eq:layout} inside the tile, the empty set and the
singletons included (Figure~\ref{fig:tiles}(b)). The patterns of a strip grow with its width much faster than with its
length---hence the width is the parameter that costs. The design above is the widest
whose patterns we enumerate on the whole ladder, \val{cor2_n14_full_patterns} of them at
$14\times14$ and \val{honest_n16_uniform_patterns} at $16\times16$.

\begin{figure}[t]
\centering
\begin{tikzpicture}[x=0.36cm,y=-0.36cm,font=\footnotesize,line cap=round]
  \begin{scope}
    \fill[gray!25] (4,0) rectangle (6,12);              %
    \foreach \i in {0,...,12} { \draw[gray!60,very thin] (0,\i) -- (12,\i); \draw[gray!60,very thin] (\i,0) -- (\i,12); }
    \draw[very thick] (2,0) rectangle (6,12);
    \draw[very thick,dashed] (4,0) rectangle (8,12);
    \draw[very thick,dotted] (0,4) rectangle (12,8);
    \node at (4,-0.6) {$R_1$};
    \node at (6.8,-0.6) {$R_2$};
    \node[right] at (12,6) {$R_3$};
    \draw[|-|] (2,-1.6) -- (4,-1.6) node[midway,above] {$k=2$};
    \draw[|-|] (2,12.7) -- (6,12.7) node[midway,below] {$b=4$};
    \node at (6,14.6) {(a)};
  \end{scope}
  \begin{scope}[shift={(16,0)}]
    \begin{scope}
      \clip (0,0) rectangle (12,12);
      \foreach \c/\r in {2/0,5/3,2/6,5/9} {
        \fill[gray!55] (\c,\r+1) rectangle (\c+1,\r+2);
        \fill[gray!35] (\c,\r+2) rectangle (\c+1,\r+4);
      }
      \fill[gray!18] (3,1) rectangle (4,3);
      \fill[gray!18] (4,4) rectangle (5,6);
      \fill[gray!18] (3,7) rectangle (4,9);
      \fill[gray!18] (4,10) rectangle (5,12);
    \end{scope}
    \foreach \i in {0,...,12} { \draw[gray!60,very thin] (0,\i) -- (12,\i); \draw[gray!60,very thin] (\i,0) -- (\i,12); }
    \draw[very thick] (2,0) rectangle (6,12);
    \foreach \c/\r in {2/0,5/3,2/6,5/9} {
      \fill (\c+0.5,\r+0.5) circle[radius=0.1cm];
    }
    \foreach \c/\r in {8/0,0/3,9/6,0/9,8/11} {
      \draw[fill=white] (\c+0.5,\r+0.5) circle[radius=0.1cm];
    }
    \node at (4,-0.6) {$R_1$};
    \draw[-{Stealth}] (9.5,-2.3) -- (9.5,-0.5) node[midway,right] {north wind};
    \node at (6,14.6) {(b)};
  \end{scope}
\end{tikzpicture}
\caption{The tiles. (a) Strips of width $b = 4$ at offsets $0, 2, 4, \dots$ on a
$12\times12$ grid: the vertical strips $R_1$ (solid) and $R_2$ (dashed) overlap on $b - k =
2$ columns (shaded), and the horizontal strip $R_3$ (dotted) crosses both. Every cell lies in
at most two vertical and two horizontal strips. (b) A pattern $T$ of $R_1$ (filled dots)
under the north wind. The power $p_c(T)$ of every cell $c$ of $R_1$ is tabulated once, with
the rest of the grid empty; the shading is schematic, darker cells losing more power.
Turbines outside the tile (hollow dots) can only lower those powers, which is what makes
the pattern powers valid bounds.}
\label{fig:tiles}
\end{figure}

The \emph{table} of $R$ is the array $p^s_c(T)$, for $c \in R$, $s \in \mathcal{S}$ and $T$
a pattern of $R$: the power of cell $c$ in direction $s$ when exactly $T$ is installed
on the whole grid, in the cubic program with its shift. Note that
it is at least the exact power of the cell, and equal to it above cut-in up to the shift.
The table is computed by the sweep of Proposition~\ref{prop:field}(a) in the cube coordinate, run on
all the patterns of the tile at once, as one batched sweep in double precision on the CPU.
The sweep is restricted to the cells that a turbine of the tile reaches and that reach a
cell of the tile. The GPU serves the heuristic of Section~\ref{sec:heur} only.
The
tables of the design cost \val{cor2_n14_full_tables_s} s at $14\times14$ on the four-core
computers of Section~\ref{sec:exp:setup}. The tables are not translation invariant, since the free stream enters
at the border of the grid and the lateral spread of a wake feels it: every tile has its own.

\subsection{The columns}\label{sec:tiles:col}

For every tile $R$ we introduce a variable $\lambda^R_T \ge 0$ per pattern $T$ and add to the
master
\begin{equation}\label{eq:columns}
\sum_{T} \lambda^R_T = 1, \qquad
\sum_{T \ni c} \lambda^R_T = x_c \quad (c \in R), \qquad
z^s_c \le \sum_{T \ni c} p^s_c(T)\, \lambda^R_T \quad (c \in R,\ s \in \mathcal{S}).
\end{equation}
This is a Dantzig--Wolfe reformulation. The block of tile $R$ is the layout problem on the
tile alone: the pairs $(x_R, z_R)$ with $x_R$ the indicator of a pattern $T$,
$0 \le z^s_c \le p^s_c(T)$ for $c \in T$ and $z^s_c = 0$ for $c \in R \setminus T$. It is
the hypograph of the row $T$ of the table on the cells of $T$, closed downward, with no
credit outside $T$. System~\eqref{eq:columns} describes
the convex combinations of these hypographs, with the site variables $x$ as the linking
variables. The blocks overlap, as in the arrowhead form of \citet{bergner2015}, and $x$ ties
the tiles together cell by cell; it could be eliminated through the second equation, and we
keep it as the variable the solver branches on. What departs from the textbook scheme is the
block itself. Note that the block is a relaxation of the tile, since turbines outside $R$
lower the power of its cells. This is where antitonicity enters, and why a cut at integer
points is still needed.

\begin{proposition}\label{prop:columns}
(a) System \eqref{eq:columns} is valid: every feasible layout $X$ with its powers
$z^s_c = x_c\, p^s_c(X)$ extends to a solution of it. (b) At every binary $x$, $\lambda^R$ is the
indicator of the pattern $X \cap R$ and \eqref{eq:columns} reads $z^s_c \le p^s_c(X \cap R)$ for
$c \in X \cap R$ and $z^s_c \le 0$ for $c \in R \setminus X$.
(c) For any family of tiles, the master with the columns \eqref{eq:columns} and an
optimality cut exact at integer points, separated there, is an exact reformulation of the
program the cut is exact for: of the unshifted cubic program ($\varepsilon = 0$) with
the Benders cut \eqref{eq:tangent}, of
EndoWake with the upwind cut \eqref{eq:upwind} and the $U$ of Lemma~\ref{lem:support}. The
two programs coincide above cut-in.
\end{proposition}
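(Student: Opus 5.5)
I would prove the three parts in order, with (a) and (b) as direct verifications and (c) as the standard exactness argument for a relaxation closed by cuts at integer points.

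For (a), fix a feasible layout $X$ with $x$ its indicator and $z^s_c = x_c\,p^s_c(X)$. For each tile $R$ put $\lambda^R_T = 1$ for $T = X\cap R$ and $0$ otherwise. The set $X\cap R$ is a pattern, because separation and the budget are inherited by subsets. The convexity row and the rows $\sum_{T\ni c}\lambda^R_T = x_c$ then hold by construction. For the power rows there are two cases. If $c\notin X$, both sides vanish. If $c\in X\cap R$, we need $p^s_c(X)\le p^s_c(X\cap R)$. This is antitonicity, Proposition~\ref{prop:field}(c), applied to $X\supseteq X\cap R$ and transported to the cube coordinate by monotonicity of $w\mapsto w^3$. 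One also needs that the table is computed on the shifted field $\tilde q\ge q$ and that $P(w)\le\kappa w^3$, so that the tabulated value dominates the exact power. The step to check carefully is that the restriction of the batched sweep to the cells that a turbine of the tile reaches does not change $p^s_c(T)$ for $c\in R$. The excluded cells carry the free stream whatever $T$ is, so the value is unchanged.

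For (b), let $x$ be binary and take $c\in R$. If $x_c=0$, then $\lambda^R_T=0$ for every $T\ni c$. If $x_c=1$, the patterns not containing $c$ get zero weight, because $\sum_T\lambda^R_T = 1 = \sum_{T\ni c}\lambda^R_T$ and $\lambda\ge0$. Intersecting over the cells of $R$, all the mass sits on patterns $T$ with $T\cap\{c: x_c=1\} \supseteq X\cap R$ and $T\subseteq X\cap R$, that is on $T = X\cap R$. Substituting into the power rows gives the stated reading.

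For (c), I would argue by sandwiching. Let $F$ be the target program: either the unshifted cubic program, or EndoWake with the true curve. By (a), every feasible point of $F$, lifted with the $\lambda$ of (a), satisfies the columns. It also satisfies every cut: the Benders cut by Proposition~\ref{prop:concave}, and the upwind cut by antitonicity. So the master is a relaxation of $F$, and its value is at least that of $F$.

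Conversely, consider an integer master solution that is accepted, meaning no cut is violated. The cut is exact at $\bar x$, which gives $\bar z^s_c\le\kappa V^3 q_c(\bar x)$, respectively $\le p^s_c(X)$. For the upwind cut this uses Lemma~\ref{lem:support}. The caps give $\bar z^s_c=0$ off $X$. Hence the objective of the master at that point does not exceed the value of $F$ at $X$, and $X$ is feasible for $F$. Since the solver accepts only such points, the optimum of the master equals that of $F$.

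The main obstacle is the exactness in the case $\varepsilon=0$. There the supergradient may be infinite at $q=0$, and I would argue that exactness only requires the cut value at $\bar x$ to equal $q_c(\bar x)$. Finally, the two programs coincide above cut-in because $P(w)=\kappa w^3$ there.
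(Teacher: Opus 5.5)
Your proposal is correct and follows the paper's proof. For (a) you use the indicator of $X\cap R$ together with antitonicity. For (b) you use the exclusion and concentration of mass through the rows $\sum_{T\ni c}\lambda^R_T=x_c$. For (c) you use that the columns are valid bounds while a cut exact at integer layouts (through Lemma~\ref{lem:support} for the upwind cut) restores $z^s_c\le p^s_c(X)$ there. Your explicit chain through $P(w)\le\kappa w^3$ and $\tilde q\ge q$, which shows that the shifted table dominates the exact power, is a precision the paper leaves implicit.

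The ``main obstacle'' at $\varepsilon=0$ does not arise. Since $M_c\ge a_0^3\ge\gamma^3>0$ and every wake cap is at least $\rho^3 q_u$, one has $q_c(x)>0$ on the whole box, so the reverse sweep returns a finite supergradient.
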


\begin{proof}
(a) Take $\lambda^R$ the indicator of $X \cap R$, a pattern since $X$ is feasible; then
$\sum_{T \ni c} \lambda^R_T = x_c$, and the right-hand side of the power inequality is
$x_c\, p^s_c(X \cap R) \ge x_c\, p^s_c(X) = z^s_c$ by Proposition~\ref{prop:field}(c), as
$X \supseteq X \cap R$ (for $c \notin X$ both sides are zero).
(b) A cell with $x_c = 0$ excludes every pattern that contains it, a cell with $x_c = 1$ puts
the whole mass on the patterns that contain it, and the only pattern that survives both is
$X \cap R$; every term of the power inequality of a cell $c \notin X$ has weight zero. (c) By (b) the columns only add valid upper bounds at integer points, and the
cuts restore $z^s_c \le p^s_c(X)$ there: the upwind cut in EndoWake, the Benders cut in the
unshifted cubic program.
\end{proof}

The implementation runs the cubic program with its shift, and
adds the single-source chords of Section~\ref{sec:model:cut}, computed on the unshifted
field. Every inequality of its master is valid at every layout of the model, so the master
is a relaxation of the model, and not literally the shifted cubic program; the certificate of
Appendix~\ref{app:proofs} uses the former property only.

In short, the tiles tighten the relaxation and the cut keeps the master exact at integer
layouts, for the unshifted cubic program: the pattern power carries the wakes
inside the tile, and what crosses its border comes back through the Benders cuts. The same
$\lambda$ supports three families of power inequalities, each implied by the next: the
aggregate one, $\sum_{c \in R} z^s_c \le \sum_T P^s_R(T)\, \lambda^R_T$ with
$P^s_R(T) = \sum_{c \in T} p^s_c(T)$, for the direction $s$ along the strip; one per cell in
that direction; one per cell and direction, as in \eqref{eq:columns}. We use the last: it
costs nonzeros and no variables, and at the root it was the tightest of the three on the
designs we tried (strips of width 1 to 3 on the $10\times10$ grid, not tabulated).
The band inequalities of \citet{fischetti2026endowake} are the aggregate strength on a band
narrower than the separation, in its own direction, projected once more on the number of
turbines of the band. They are the chords of the upper hull of $(j, \Psi_R(j))$, with $\Psi_R(j)$ a
dynamic-programming bound on the power of $j$ turbines of the band. By contrast, projecting
\eqref{eq:columns} on $(x_R, \sum_{c \in R} z^s_c)$ gives the concave envelope of the
band power of the table, shifted as in Section~\ref{sec:model:cube}, in the space of the
cells. It is attained on every pattern, so among the inequalities that hold on the hypographs
of the shifted blocks none is tighter in the space of those variables. The same holds for the
unshifted cubic model, with the table at $\varepsilon = 0$ and no cut-in, since a pattern
alone is a feasible layout that attains its unshifted value.

\subsection{Loss coordinates: the sparse master}\label{sec:tiles:loss}

The pattern $\{c\}$ leaves the free stream at $c$, so $\max_{T \ni c} p^s_c(T) = P(V)$ and,
through $\sum_{T \ni c} \lambda^R_T = x_c$, the power inequality of \eqref{eq:columns} is the
same as
\begin{equation}\label{eq:loss}
z^s_c \le P(V)\, x_c - \sum_{T \ni c} \bigl[ P(V) - p^s_c(T) \bigr] \lambda^R_T ,
\end{equation}
the cap of the master minus the loss of each pattern at $c$. A nonzero appears only where a
pattern loses power at $c$. The LP is the same, with
\val{cor2_n14_loss_nz_reduction_pct}\,\% fewer nonzeros at $14\times14$; what the sparse
form saves in root time and memory is measured in Section~\ref{sec:exp:attr}.

\subsection{Which tiles}\label{sec:tiles:design}

Width and overlap both buy bound, and patterns pay for both. Table~\ref{tab:design}
compares three designs on the $14\times14$ grid with the pipeline of
Sections~\ref{sec:shrink}--\ref{sec:heur}, on five seeds. Every design receives the same
incumbent (the certified optimum, entered as it is) and runs on the shared nodes of a
cluster (two Xeon models, Section~\ref{sec:exp:setup}, not a computer of the ladder), so that the
designs differ in the root, the fixing and the tree only. The work units of the solver are
the comparison there, and the seconds are indicative. Strips of width 3 at every offset (V3+H3, the first
design that certified this grid) leave a root gap of \val{tiles14inc_v3h3_root_gap_pct}\,\%
with \val{tiles14inc_v3h3_patterns} patterns. The design of the paper leaves
\val{tiles14inc_v4s2h4s2_root_gap_pct}\,\% with \val{tiles14inc_v4s2h4s2_patterns}. Width 5
at every third offset leaves \val{tiles14inc_v5s3h5s3_root_gap_pct}\,\% for
\val{tiles14inc_v5s3h5s3_patterns_ratio} times the patterns of the design. The designs
change width and overlap together, so the table ranks them and does not separate the two.
Two facts fixed the choice. Overlap is what the node LPs pay: with width 4 at every offset the dual simplex
stalls on degenerate pivots that each price every column, and at overlap 2 the stalling goes
while the bound stays. A tight root makes the reduced-cost fixing of
Section~\ref{sec:shrink} extremely effective, since its threshold is the gap between the root and
the incumbent. At the same incumbent the tree of the design takes
\val{tiles14inc_v4s2h4s2_work} work units against \val{tiles14inc_v3h3_work} for width 3
(\val{tiles14inc_v4s2h4s2_tree_s} against \val{tiles14inc_v3h3_tree_s} s). At $16\times16$,
with the certified optimum as the incumbent, width 3 certifies \val{tiles16inc_v3h3_certified}
of \val{tiles16inc_v3h3_runs} seeds within 10\,800 s, on
\val{tiles16inc_v3h3_nodes_min}--\val{tiles16inc_v3h3_nodes_max} nodes from a root gap of
\val{tiles16inc_v3h3_root_gap_pct}\,\%. The design of the paper closes the five seeds
from a cold start (Table~\ref{tab:16}). Width 5 buys a
few hundredths of a percent of root gap and the same tree
(\val{tiles14inc_v5s3h5s3_work} work units), and pays for them with a root LP of
\val{tiles14inc_v5s3h5s3_root_s} s and \val{tiles14inc_v5s3h5s3_maxrss_gb} GB of memory. In view of the above, the design is fixed from here on, and Section~\ref{sec:exp} measures the
pipeline with it. We decided to run the same tile width and stride on every grid and rose of
Sections~\ref{sec:exp} and~\ref{sec:limits}, with no selection per instance.

\begin{table}[t]
\centering\small
\caption{Three tile designs on the $14\times14$ grid, five seeds each, the certified
optimum as the incumbent of every design, shared Xeon nodes of two models, tree limit
1800 s: certified runs, root gap of the design (\%), columns after the reduced-cost fixing
and after the propagation between tiles, nodes, work units of the tree, tree and wall
seconds (medians; the work units are deterministic for a fixed model on one machine at equal settings, the seconds indicative), peak memory (GB).}
\label{tab:design}
\footnotesize\setlength{\tabcolsep}{4pt}
\begin{tabular}{l r r rr rrrr r}
\toprule
design & cert. & root gap & fixing & propagation & nodes & work & tree s & wall s & GB\\
\midrule
V3+H3 & \val{tiles14inc_v3h3_certified}/\val{tiles14inc_v3h3_runs} & \val{tiles14inc_v3h3_root_gap_pct} & \val{tiles14inc_v3h3_kept} & \val{tiles14inc_v3h3_kept_final} & \val{tiles14inc_v3h3_nodes} & \val{tiles14inc_v3h3_work} & \val{tiles14inc_v3h3_tree_s} & \val{tiles14inc_v3h3_wall_s} & \val{tiles14inc_v3h3_maxrss_gb}\\
V4s2+H4s2 & \val{tiles14inc_v4s2h4s2_certified}/\val{tiles14inc_v4s2h4s2_runs} & \val{tiles14inc_v4s2h4s2_root_gap_pct} & \val{tiles14inc_v4s2h4s2_kept} & \val{tiles14inc_v4s2h4s2_kept_final} & \val{tiles14inc_v4s2h4s2_nodes} & \val{tiles14inc_v4s2h4s2_work} & \val{tiles14inc_v4s2h4s2_tree_s} & \val{tiles14inc_v4s2h4s2_wall_s} & \val{tiles14inc_v4s2h4s2_maxrss_gb}\\
V5s3+H5s3 & \val{tiles14inc_v5s3h5s3_certified}/\val{tiles14inc_v5s3h5s3_runs} & \val{tiles14inc_v5s3h5s3_root_gap_pct} & \val{tiles14inc_v5s3h5s3_kept} & \val{tiles14inc_v5s3h5s3_kept_final} & \val{tiles14inc_v5s3h5s3_nodes} & \val{tiles14inc_v5s3h5s3_work} & \val{tiles14inc_v5s3h5s3_tree_s} & \val{tiles14inc_v5s3h5s3_wall_s} & \val{tiles14inc_v5s3h5s3_maxrss_gb}\\
\bottomrule
\end{tabular}
\end{table}

\section{Shrinking the master before the tree search}\label{sec:shrink}

The columns are many and the tree must not branch on them. Everything in this section rests
on one argument, which we state once here and prove with its hypotheses in
Appendix~\ref{app:proofs}. Let $v$ be the value of the root LP of the master and
$\bar c_T \le 0$ the reduced cost of column $\lambda^R_T$ at its optimal basis. A feasible
layout $X$ with its exact powers and the indicators of its patterns is a feasible point of
the root LP with objective $E(X)$, so LP duality gives $E(X) \le v + \bar c_T$ for every
pattern $T$ that $X$ uses. Hence, given an incumbent of value $L$, every column with
$\bar c_T < -(v - L)$ can be removed: no layout better than the incumbent uses it. This is the
reduced-cost fixing of column generation; the structure of the tiles adds a second test.

\paragraph{The incumbent.} It comes from the tile heuristic of Section~\ref{sec:heur},
run before the master is built. Where no GPU is at hand (the computers of the ladder and of
the roses), an LP dive on the columns of best reduced cost fixes the fractional $x_c$ of
largest value to one. The dive re-solves the LP by barrier (undoing a fix that makes it
infeasible) until the layout is integral or its time limit ends it. It then completes the
layout greedily, by the remaining LP values, and polishes it by 1-opt moves on the exact
energy. The
dive is at
the optimum up to the $14\times14$ grid and at \val{cor2_n16_full_inc} against the optimum
\val{cor2_n16_full_exact} on the $16\times16$. Either way the incumbent enters the solver as
a complete MIP start, $x$ and $z$ at its exact powers (without the $z$ the lazy Benders cuts
reject it); its exact energy is the value $L$ of the fixing, and the solver prunes against its
own incumbent.

\paragraph{Fixing at the root.} The root LP is solved once, by the solver's default method
with the settings of the run (the seed of the run acts in the tree). The reduced costs are those of the optimal basis it returns (another optimal basis of
the same degenerate LP gives another kept set, Section~\ref{sec:exp:attr}). The columns with $\bar c_T < -(v - L)$, up to a tolerance of \val{tol_fix} of the root value that protects the patterns
of an optimal incumbent from rounding, leave the tables. From the dive, the fixing keeps
\val{cor2_n14_loss_kept} of \val{cor2_n14_loss_patterns} columns at $14\times14$; with the
heuristic at the optimum it keeps \val{honest_n16_uniform_kept} of the
columns of the $16\times16$ grid.

\paragraph{Propagation between tiles.} Two overlapping tiles must agree on their shared
cells: a surviving pattern of tile $R$ that agrees on $R \cap R'$ with no surviving pattern
of a neighbor $R'$ cannot be part of any layout of the reduced master, and can be removed too.
Each tile is a table constraint on its cells in the sense of constraint programming, whose
allowed tuples are its surviving patterns, and the test is pairwise consistency between
the constraints of overlapping tiles \citep{bessiere2006}, with the whole intersection as the
constraint; \citet{lecoutre2013} enforce it on table constraints through the assignments of
the intersections. In the terms of Dantzig--Wolfe it is the support form of the consistency cuts of
\citet{clausen2022}: for two tiles that share $S = R \cap R'$, their equations
$\sum_{T:\, T \cap S = A} \lambda^R_T = \sum_{T':\, T' \cap S = A} \lambda^{R'}_{T'}$, one per
assignment $A$ of the shared cells, equate the mass of the two tiles on every pattern of the
overlap. An assignment with no surviving pattern on one side sets the patterns of the other
side to zero, which is what the propagation does; at fractional points the equations also
equate the mass that the two tiles put on every assignment of the overlap, whereas the
master equates only the marginal of each shared cell, through the site variables $x$. A two-cell form of them, equal
mass on the patterns that hold two shared cells, is an option of the code that the pipeline
does not use. The signatures of the shared cells are built per pair of overlapping tiles, and
the passes repeat to a fixed point (the implementation caps their time), in a matter of
seconds. It is valid as the fixing is: a layout whose
patterns are all present stays present. On the ladder it is the lever that shortens the tree
(Section~\ref{sec:exp:attr}); at $16\times16$ it leaves \val{honest_n16_uniform_kept_final}
of the survivors of the fixing.

\paragraph{Refixing in the tree.} The value $L$ is the best exact energy known so far. Every new
incumbent is re-evaluated by the oracle, and its energy for the pipeline is the smaller of
its value in the shifted sweep and its exact energy, in practice the latter. When it beats
the incumbent value $L$, the test is re-applied with
the root reduced costs and the new threshold. The columns it removes get $\lambda^R_T \le 0$ as lazy inequalities, since
bounds cannot change in a callback. The inequalities are valid as the fixing is. Note that it acts only on a weak
incumbent: it fires zero times on the ladder, where the dive is at the optimum, and in the
single job of Section~\ref{sec:exp}, where the heuristic is. On the $16\times16$ grid
started from the dive the first improving solution removes \val{cor2_n16_full_refixed} of
the \val{cor2_n16_full_kept} surviving columns in one callback.

\paragraph{The pipeline in order.} (1) The tile heuristic on the GPU, where there is one,
its layout evaluated by the oracle. (2) The tables of every tile. (3) The root LP of the
master with the columns, in loss coordinates. (4) Where there is no GPU, the LP dive on that
root, its layout evaluated by the oracle. (5) The reduced-cost fixing against the incumbent.
(6) The propagation between tiles, to a fixed point. (7) The tree on the reduced master,
with the Benders cuts as lazy inequalities at integer points, every improving incumbent
re-evaluated by the oracle, and the refixing. (8) The certificate, when the tree closes.

\paragraph{What is a certificate.} A tree that closes on the reduced master with the
incumbent entered at its exact energy. Every reduction above keeps the patterns of every
layout better than the incumbent, so a bound on the reduced master is a bound on every
layout better than the incumbent. The optimum of the model then lies between the incumbent
and that bound (Proposition~\ref{prop:valid} in Appendix~\ref{app:proofs}). A run that ends below the
incumbent of the fixing is not a certificate.

\section{A tile heuristic on the GPU}\label{sec:heur}

We next describe the heuristic that produces the incumbent. GPU batch evaluation of EndoWake
layouts, as the screen of a local search in a design pipeline, is the subject of
\citet{fischetti2026gpu}; here the GPU evaluates complete tile replacements and its output is
the incumbent of the reformulation. The heuristic is a local search whose move
is the tile itself: take a tile, keep the
turbines outside it, and choose the turbines inside it by enumeration.
Every pattern of the tile at least the separation distance away from the outside turbines
and within the budget is a candidate layout. All of them are scored at once by one
batched sweep of the wind field on the GPU, namely, the arithmetic of
Proposition~\ref{prop:field}(a) run on a batch of layouts in single precision. The score is
the unshifted cubic power of the field, capped at rated and without cut-in. For every direction the cells are
sorted by levels of the dependency graph, and a level costs one gather, one power and one scatter-minimum for the whole batch.
The compatible patterns are filtered on the GPU as a table of bits, one row per pattern. The
best \val{heur_topk_w} candidates of a visit are re-evaluated by the reference evaluator on the CPU, with
the power curve of Section~\ref{sec:model:inst} in double precision, so that the accepted
value is always exact. What the score of the ranking can cost is a candidate left out of
those \val{heur_topk_w}, never a wrong value. A candidate more than \val{heur_gputol} kW below the best exact value in the
ranking is skipped. A move changes up to $\Nmax$ turbines at once, where a 1-opt move changes one. The
tiles, the strips of width \val{heur_width} at every offset plus
\val{heur_win_rows}$\times$\val{heur_win_cols} windows at stride \val{heur_stride}, are visited
in random order until a round moves nothing, or for \val{heur_rounds_w} rounds at most. Then a perturbation removes from the best
layout the \val{heur_kick_w} turbines nearest to a random cell, the rounds start again, and
the best layout is kept. A start stops after \val{heur_patience_w} perturbations without a
better one, or \val{heur_kicks} in all, and the whole run stops at \val{heur_tl} s.
The search runs from \val{heur_starts_w} random greedy starts per run, since it turns out that a start
that fails ends a few tenths of a percent below the optimum.

In the single job of Section~\ref{sec:exp} the heuristic reaches the optimum of the
$16\times16$ grid on every seed, from \val{honest_n16_uniform_starts_at_exact} of
\val{honest_n16_uniform_starts} random starts. It takes a median of
\val{honest_n16_uniform_heur_s} s for \val{heur_starts_w} starts on one GPU. The same code
also runs without the GPU, on four cores of a Xeon Platinum 8592+ node, with a limit of
\val{heurcpu_cpu_tl} s. On all \val{heurcpu_same} seeds of the uniform rose the two runs
follow one trajectory, start by start and move by move, as far as the CPU run gets within its
limit, and the time to the last common move is \val{heurcpu_ratio_min}--\val{heurcpu_ratio_max}
times longer on the CPU. It
reaches the optimum in \val{heurcpu_cpu_t_opt_min}--\val{heurcpu_cpu_t_opt_max} s, against
\val{heurcpu_gpu_t_opt_min}--\val{heurcpu_gpu_t_opt_max} s on the GPU, and within the limit of
the job on \val{heurcpu_at_gpu_tl_opt} seeds of \val{heurcpu_seeds}. In these runs the GPU
saves time and does not change the layout. On the two
non-uniform roses of that section, it reaches the certified optimum from
\val{honest_n16_p60_starts_at_exact} of \val{honest_n16_p60_starts} and from
\val{honest_n16_p40_starts_at_exact} of \val{honest_n16_p40_starts} starts, again on every seed.
When every start of a run misses the optimum, the pipeline runs from a
weaker incumbent, as the cold start of Section~\ref{sec:exp:16} does on the uniform rose,
where it still certifies every seed. The parameters of the heuristic are one set, written in
the script of the single job before its first run and used unchanged in all its runs, on
every grid and rose.

\section{Computational experiments}\label{sec:exp}

We measure three things. The first is the EndoWake ladder against
its branch-and-check re-run on the same hardware (Section~\ref{sec:exp:ladder}).
The second is the $16\times16$ grid from random starts in one job, on the uniform rose and on two
non-uniform ones (Section~\ref{sec:exp:16}). The third is what each lever contributes
(Section~\ref{sec:exp:attr}).

\subsection{Setup}\label{sec:exp:setup}

The ladder, the roses and the attribution at $14\times14$ run on the computers of \citet{fischetti2026endowake} (Intel Xeon
E3-1220 v2 at 3.10 GHz, four cores without hyper-threading, 14 GB, one job per computer, four
threads). The two codes are thus compared at equal CPU and threads. The $16\times16$ grid
needs more memory and runs on the nodes of a cluster. The cold start runs on an
Intel Xeon Platinum 8592+ node, and the single job on AMD EPYC 9335 nodes with one NVIDIA RTX PRO 6000
Blackwell GPU (96 GB), four CPUs allocated to the job. Where the CPUs differ the comparison
within the pipeline is in the work units of the solver, a measure of its effort that is
deterministic for a fixed model at equal settings, thread count and hardware, as the solver
documents it (the fixing, hence the model of the tree, follows the basis the root LP returns,
Section~\ref{sec:shrink}), and indicative across machines; the wall times are stated per machine. The branch-and-check at $16\times16$ reports wall
time, gap and nodes only, since the frozen code records no work units. The design
comparison of Table~\ref{tab:design} runs on the shared nodes of the same cluster, Intel
Xeon Platinum 8592+ and Xeon Gold 6252N, where again the work units are the comparison.
Both codes use the same version of the solver, gurobipy 13.0.3. The branch-and-check is the frozen
code of \citet{fischetti2026endowake}, run as published (no MIP start and no cutoff, the
band inequalities in its master, the same seeds as the pipeline). At $16\times16$ it also
runs on the node type of the cold start, as published and with the optimum as cutoff. The pipeline is the design
V4s2+H4s2 in loss coordinates, reduced-cost fixing, propagation between tiles and refixing,
with the tile heuristic of Section~\ref{sec:heur} on the GPU where there is one and the LP
dive on the computers of the ladder. Every grid runs on five seeds, 1 to 5 (three, 1 to 3, for the
$14\times14$ roses). The seed is the seed parameter of the solver in
both codes and the generator of the heuristic. The limit is 3600 s on the ladder for both codes. For the pipeline it is the limit of the
tree, after the tables, the root and the incumbent, whose times the wall time includes. For
the branch-and-check it is the limit of its tree, after its master is built and a first
relaxation solved. At $16\times16$ the limit is 10\,800 s, on the tree of each code alike. The single job ends within
\val{honest_n16_p60_wall_max_s} s of wall time at most, heuristic and tables included.

A run is \emph{certified} when the solver closes the reduced master, with the incumbent
entered at its exact energy, within its default relative gap of $10^{-4}$. A closed tree
proves that no layout has energy above the final bound $U \ge L$ of the solver, so that the
optimum lies in $[L, U]$ (Appendix~\ref{app:proofs}). Throughout, the \emph{optimum} of a
grid is the energy $L$ of its certified layout, the best value within the gap. Three
quantities enter the \emph{width} $U - L$ of a certificate: the exact energy $L$ of the
final layout, the value $M$ that the master credits to it, and the bound $U$, with
$U - L = (U - M) + (M - L)$. The first term is the solver's gap, at most $10^{-4}$ of $M$
by the setting. The second is the \emph{residual}, the excess of the value credited over the
exact energy. When the solver's incumbent is a solution it found, possibly the same layout as
the start, that solution is credited about its shifted powers (Section~\ref{sec:model:cube}),
so $M$ exceeds $L$ by about the shift. When the solver keeps the start solution, entered at
its exact energy, the credit is $L$ and the residual is zero. Over the \val{shift_certified}
certified runs of the campaigns of this paper whose final layout is stored, the solver keeps
the start in \val{shift_start_runs} and ends on a solution of its own in
\val{shift_tree_runs}. Over the latter the residual is
\val{shift_resid_min_pct}--\val{shift_resid_max_pct}\,\%, at most
\val{shift_resid_max_pct_up}\,\%, and the credited value is within
\val{shift_obj_minus_shift_max_kw} kW of the sweep of the field at the shift $\varepsilon$. The same sweep
at $\varepsilon = 0$ agrees with the oracle to \val{shift_unshifted_vs_ref_max_kw} kW, and
on every stored layout the shift raises the energy by
\val{shift_shift_min_pct}--\val{shift_shift_max_pct}\,\%. The width $(U - L)/L$ is at most \val{cor2_gap_exact_max_pct}\,\% over the
\val{cor2_wc_certified} certified runs of \val{cor2_wc_runs} of the campaign of
Section~\ref{sec:exp:attr} (three arms on three grids of the ladder and three arms at
$16\times16$, five seeds each). Where the tree closes at the gap rather than at equality,
the width exceeds the residual by the solver's gap. An \emph{arm} is one configuration of
the pipeline: \texttt{base}, \texttt{loss} and \texttt{full} add the levers one at a time,
and Section~\ref{sec:exp:attr} defines them. For the single job of
Section~\ref{sec:exp:16} the width is \val{honest_n16_uniform_gap_exact_pct}\,\%. Repeated
with the gap set to zero, the job closes with the same width, the same nodes and the same
work units: there $U = M$, the gap does not govern it, and the width is the residual alone.
The threshold of the separation, which adds a cut only when a power variable exceeds its
power in the cubic program by more than \val{tol_sep} of the rated power, is a smaller source on
top. These are the certificates of a floating-point solver: we compute no safe bound in the
sense of \citet{neumaier2004}, and the width is the margin we report. A run at the limit is
censored and enters no ratio. The spread within a grid and an arm is across seeds.

\subsection{The ladder}\label{sec:exp:ladder}

Table~\ref{tab:ladder} gives the ladder. The branch-and-check certifies the $10\times10$ grid
in \val{cor2_n10_ejor_solve_s} s and none of the larger ones in an hour. At the limit it
stands at \val{cor2_n12_ejor_gap_pct}\,\% and \val{cor2_n14_ejor_gap_pct}\,\% of gap. It finds the
$12\times12$ optimum on \val{cor2_n12_ejor_found_opt} of \val{cor2_n12_ejor_runs} seeds and
stays \val{cor2_n14_ejor_best_below_pct}\,\% under the $14\times14$ one. Its gap is a bound
effect before an incumbent effect: the final bound of its published run
\citep{fischetti2026endowake} stays \val{bound_n12_ejorpub_bound_above_pct}\,\% and
\val{bound_n14_ejorpub_bound_above_pct}\,\% above the optima certified here, so that the
optimum in place of its incumbent, at that bound, leaves a gap of that size. The cutoff run
at $16\times16$ (Table~\ref{tab:16}) measures the proof effort that remains once the optimum
is known; what a warm start would do to its search we do not infer. The pipeline
certifies every seed of every grid. The $10\times10$ and the $12\times12$ close at the root,
the $14\times14$ in a median of \val{cor2_n14_full_wall_s} s of wall time, all phases
included, with a tree of \val{cor2_n14_full_nodes} nodes on \val{cor2_n14_full_kept} columns.
The root gap of the design is below half a percent on every grid, and the fixing and
propagation leave a few percent of the columns to the tree.

\begin{table}[t]
\centering\small
\caption{The ladder on the computers of Section~\ref{sec:exp:setup}, five seeds per grid, 3600 s on the tree of each code; pipeline with the
\texttt{full} arm of Table~\ref{tab:attr} and the LP dive as incumbent. Branch-and-check:
certified runs, median solve time or median gap at the limit, best energy found below the
optimum (\%). Pipeline: certified runs, root gap of the design (\%), columns in the tree after
fixing and propagation (median), nodes, tree and wall seconds (medians), peak memory (GB).}
\label{tab:ladder}
\begin{tabular}{l rrr rrrrrrr}
\toprule
& \multicolumn{3}{c}{branch-and-check} & \multicolumn{7}{c}{pipeline}\\
\cmidrule(lr){2-4}\cmidrule(lr){5-11}
grid & cert. & s or gap & below & cert. & root gap & columns & nodes & tree s & wall s & GB\\
\midrule
$10\times10$ & \val{cor2_n10_ejor_certified}/\val{cor2_n10_ejor_runs} & \val{cor2_n10_ejor_solve_s} s & \val{cor2_n10_ejor_best_below_pct}
 & \val{cor2_n10_full_certified}/\val{cor2_n10_full_runs} & \val{cor2_n10_full_root_gap_pct} & \val{cor2_n10_full_kept} & \val{cor2_n10_full_nodes} & \val{cor2_n10_full_tree_s} & \val{cor2_n10_full_wall_s} & \val{cor2_n10_full_maxrss_gb}\\
$12\times12$ & \val{cor2_n12_ejor_certified}/\val{cor2_n12_ejor_runs} & \val{cor2_n12_ejor_gap_pct}\,\% & \val{cor2_n12_ejor_best_below_pct}
 & \val{cor2_n12_full_certified}/\val{cor2_n12_full_runs} & \val{cor2_n12_full_root_gap_pct} & \val{cor2_n12_full_kept} & \val{cor2_n12_full_nodes} & \val{cor2_n12_full_tree_s} & \val{cor2_n12_full_wall_s} & \val{cor2_n12_full_maxrss_gb}\\
$14\times14$ & \val{cor2_n14_ejor_certified}/\val{cor2_n14_ejor_runs} & \val{cor2_n14_ejor_gap_pct}\,\% & \val{cor2_n14_ejor_best_below_pct}
 & \val{cor2_n14_full_certified}/\val{cor2_n14_full_runs} & \val{cor2_n14_full_root_gap_pct} & \val{cor2_n14_full_kept} & \val{cor2_n14_full_nodes} & \val{cor2_n14_full_tree_s} & \val{cor2_n14_full_wall_s} & \val{cor2_n14_full_maxrss_gb}\\
\bottomrule
\end{tabular}
\end{table}

The same ladder under six non-uniform axial roses (opposite pairs equal, one prevailing
direction, all different, mild and strong) runs on the same computers, with the LP dive as
incumbent and the \texttt{base} arm of Table~\ref{tab:attr}. That arm is the fixing at the
root before the loss coordinates and the propagation were in place, with no symmetry device.
The six roses are, as $(p_N, p_E, p_S, p_W)$, \val{rose_nu_list}. Overall, \val{rose_solved} runs of
\val{rose_runs} close, and at $12\times12$ and $14\times14$ the uniform rose is the hardest
(Table~\ref{tab:rose}). With the cardinality cap tight the optimum is the uniform optimum
reoriented in \val{rose_d4_images} cases of \val{rose_rows}. The exception is the
$10\times10$ grid at 90\,\% on one axis, where the optimum installs
\val{rose_n10_axial_turbines} turbines against \val{rose_n10_uniform_turbines}.

\begin{table}[t]
\centering\small
\caption{The ladder on the six non-uniform axial roses listed in the text, same computers, LP
dive as incumbent, \texttt{base} arm of Table~\ref{tab:attr}, five seeds per rose at $10\times10$ and $12\times12$ and three at
$14\times14$ (two roses of the $14\times14$ solve the root by barrier: their times are
indicative). Per grid: nodes and tree seconds of the uniform rose under the same arm and on the same
seeds (three at $14\times14$, hence medians other than those of Table~\ref{tab:attr}), and over
the six roses the runs closed, the range of the median nodes, tree seconds and root gap (\%),
and the roses whose optimum is a dihedral image of the uniform one.}
\label{tab:rose}
\begin{tabular}{l rr r rrrr}
\toprule
& \multicolumn{2}{c}{uniform} & \multicolumn{5}{c}{six roses}\\
\cmidrule(lr){2-3}\cmidrule(lr){4-8}
grid & nodes & tree s & closed & nodes & tree s & root gap & images\\
\midrule
$10\times10$ & \val{rose_n10_uniform_nodes} & \val{rose_n10_uniform_tree_s} & \val{rose_n10_nu_solved}/\val{rose_n10_nu_runs} & \val{rose_n10_nu_nodes_min}--\val{rose_n10_nu_nodes_max} & \val{rose_n10_nu_tree_min_s}--\val{rose_n10_nu_tree_max_s} & \val{rose_n10_nu_root_gap_min_pct}--\val{rose_n10_nu_root_gap_max_pct} & \val{rose_n10_nu_d4_images}/\val{rose_n10_nu_roses}\\
$12\times12$ & \val{rose_n12_uniform_nodes} & \val{rose_n12_uniform_tree_s} & \val{rose_n12_nu_solved}/\val{rose_n12_nu_runs} & \val{rose_n12_nu_nodes_min}--\val{rose_n12_nu_nodes_max} & \val{rose_n12_nu_tree_min_s}--\val{rose_n12_nu_tree_max_s} & \val{rose_n12_nu_root_gap_min_pct}--\val{rose_n12_nu_root_gap_max_pct} & \val{rose_n12_nu_d4_images}/\val{rose_n12_nu_roses}\\
$14\times14$ & \val{rose_n14_uniform_nodes} & \val{rose_n14_uniform_tree_s} & \val{rose_n14_nu_solved}/\val{rose_n14_nu_runs} & \val{rose_n14_nu_nodes_min}--\val{rose_n14_nu_nodes_max} & \val{rose_n14_nu_tree_min_s}--\val{rose_n14_nu_tree_max_s} & \val{rose_n14_nu_root_gap_min_pct}--\val{rose_n14_nu_root_gap_max_pct} & \val{rose_n14_nu_d4_images}/\val{rose_n14_nu_roses}\\
\bottomrule
\end{tabular}
\end{table}

\subsection{The $16\times16$ grid from random starts}\label{sec:exp:16}

Table~\ref{tab:16} collects the $16\times16$ runs, with the source of the incumbent
declared and the seeds counted in its first column. The branch-and-check, re-run on the
computers of the ladder, closes no seed in three
hours and stays at \val{ejor16_tl10800_gap_pct}\,\% of gap with its best layout
\val{ejor16_tl10800_best_below_pct}\,\% under the optimum. On the node type of the cold start
it closes no seed either, as published or with the optimum as cutoff (raised by the relative
gap of $10^{-4}$), which leaves it only the proof. On every seed its bound stays at
\val{ejor16up_cutoff_bound} kW, \val{ejor16up_cutoff_gap_pct}\,\% above the optimum, as on
the computers of the ladder. The pipeline from a cold start,
with the LP dive as incumbent (\val{honest_ref_cor2full_inc_gap_pct}\,\% under the optimum)
and no GPU, certifies the five seeds within the limit; the refixing fires at the first
improving solution and removes \val{cor2_n16_full_refixed} columns. The single job is the
heuristic on the GPU followed by the same arm under one timer, with no stored layout, no
cached root and no known optimum. It certifies the five seeds at \val{honest_n16_uniform_exact}
kW in \val{honest_n16_uniform_wall_min_s}--\val{honest_n16_uniform_wall_max_s} s of wall
time and \val{honest_n16_uniform_cpu_h} CPU hours, with the GPU allocated for the whole job
and busy for the heuristic only. Repeated with the solver's gap at zero on the five seeds, it closes
them again with the same width (\val{honest_n16_uniform_gap0_gap_exact_pct}\,\%), the same
nodes and the same work units. On the rose $(p_N, p_E, p_S, p_W) = (0.6, 0.25, 0.1, 0.05)$,
whose optimum was unknown before, the single job certifies the
five seeds at \val{honest_n16_p60_exact} kW in
\val{honest_n16_p60_wall_min_s}--\val{honest_n16_p60_wall_max_s} s: the root gap is larger
(\val{honest_n16_p60_root_gap_pct} against \val{honest_n16_uniform_root_gap_pct}\,\%) and,
as expected, the tree pays for it. The rose $(0.4, 0.3, 0.2, 0.1)$ sits between the two:
the five seeds certify at \val{honest_n16_p40_exact} kW in
\val{honest_n16_p40_wall_min_s}--\val{honest_n16_p40_wall_max_s} s, with a root gap of
\val{honest_n16_p40_root_gap_pct}\,\% and \val{honest_n16_p40_nodes} nodes, against
\val{honest_n16_uniform_nodes} on the uniform rose and \val{honest_n16_p60_nodes} on the
skewed one. Between the cold start and the single job the root costs the same work
(\val{honest_ref_cor2full_root_work} against \val{honest_n16_uniform_root_work} units) and
the tree \val{honest_ref_cor2full_work_ratio} times less, on \val{honest_ref_cor2full_kept_ratio}
times fewer columns. This is what the incumbent at the optimum buys.

\begin{table}[t]
\centering\small
\caption{The $16\times16$ grid, five seeds per row; the first column counts the certified
runs. Branch-and-check on the computers of the ladder at two limits on its tree (wall time only) and for
10\,800 s on the Xeon node, as published and with the optimum as cutoff (its gap: its bound
against the exact energy of its own best layout); the pipeline from a cold start (dive
incumbent, Xeon node, tree limit 10\,800 s) and in the single job (heuristic on the GPU, then
the same arm, EPYC node), also with the solver's gap at zero and on two non-uniform roses.
Incumbent: its gap to the optimum (\%) and, for the heuristic, the random starts that reach
the optimum; columns in the tree (median); nodes, tree
seconds, work units and wall seconds (medians, wall as a range; the median wall of the single
job is \val{honest_n16_uniform_wall_s} s on the uniform rose and \val{honest_n16_p60_wall_s} s
on the rose 0.6/0.25/0.1/0.05); peak memory (GB).}
\label{tab:16}
\footnotesize\setlength{\tabcolsep}{3pt}
\begin{tabular}{l l r r r r r r r}
\toprule
run & cert. & incumbent / gap & columns & nodes & tree s & work & wall s & GB\\
\midrule
branch-and-check, 3600 s & \val{ejor16_tl3600_certified}/\val{ejor16_tl3600_runs} & gap \val{ejor16_tl3600_gap_pct}\,\% & & \val{ejor16_tl3600_nodes} & & & \val{ejor16_tl3600_wall_s} & \\
branch-and-check, 10\,800 s & \val{ejor16_tl10800_certified}/\val{ejor16_tl10800_runs} & gap \val{ejor16_tl10800_gap_pct}\,\% & & \val{ejor16_tl10800_nodes} & & & \val{ejor16_tl10800_wall_s} & \\
\quad Xeon node & \val{ejor16up_plain_certified}/\val{ejor16up_plain_runs} & gap \val{ejor16up_plain_solver_gap_pct}\,\% & & \val{ejor16up_plain_nodes} & & & \val{ejor16up_plain_wall_s} & \val{ejor16up_plain_maxrss_gb}\\
\quad Xeon node, optimum as cutoff & \val{ejor16up_cutoff_certified}/\val{ejor16up_cutoff_runs} & gap \val{ejor16up_cutoff_solver_gap_pct}\,\% & & \val{ejor16up_cutoff_nodes} & & & \val{ejor16up_cutoff_wall_s} & \val{ejor16up_cutoff_maxrss_gb}\\
\midrule
cold start, dive & \val{cor2_n16_full_certified}/\val{cor2_n16_full_runs} & \val{honest_ref_cor2full_inc_gap_pct} & \val{cor2_n16_full_kept} & \val{cor2_n16_full_nodes} & \val{cor2_n16_full_tree_s} & \val{honest_ref_cor2full_work} & \val{cor2_n16_full_wall_s} & \val{cor2_n16_full_maxrss_gb}\\
\midrule
single job, heuristic & \val{honest_n16_uniform_certified}/\val{honest_n16_uniform_runs} & 0, \val{honest_n16_uniform_starts_at_exact}/\val{honest_n16_uniform_starts} & \val{honest_n16_uniform_kept_final} & \val{honest_n16_uniform_nodes} & \val{honest_n16_uniform_tree_s} & \val{honest_n16_uniform_work} & \val{honest_n16_uniform_wall_min_s}--\val{honest_n16_uniform_wall_max_s} & \val{honest_n16_uniform_maxrss_gb}\\
\quad the same, gap zero & \val{honest_n16_uniform_gap0_certified}/\val{honest_n16_uniform_gap0_runs} & 0, \val{honest_n16_uniform_gap0_starts_at_exact}/\val{honest_n16_uniform_gap0_starts} & \val{honest_n16_uniform_gap0_kept_final} & \val{honest_n16_uniform_gap0_nodes} & \val{honest_n16_uniform_gap0_tree_s} & \val{honest_n16_uniform_gap0_work} & \val{honest_n16_uniform_gap0_wall_min_s}--\val{honest_n16_uniform_gap0_wall_max_s} & \val{honest_n16_uniform_gap0_maxrss_gb}\\
single job, rose 0.4/0.3/0.2/0.1 & \val{honest_n16_p40_certified}/\val{honest_n16_p40_runs} & 0, \val{honest_n16_p40_starts_at_exact}/\val{honest_n16_p40_starts} & \val{honest_n16_p40_kept_final} & \val{honest_n16_p40_nodes} & \val{honest_n16_p40_tree_s} & \val{honest_n16_p40_work} & \val{honest_n16_p40_wall_min_s}--\val{honest_n16_p40_wall_max_s} & \val{honest_n16_p40_maxrss_gb}\\
single job, rose 0.6/0.25/0.1/0.05 & \val{honest_n16_p60_certified}/\val{honest_n16_p60_runs} & 0, \val{honest_n16_p60_starts_at_exact}/\val{honest_n16_p60_starts} & \val{honest_n16_p60_kept_final} & \val{honest_n16_p60_nodes} & \val{honest_n16_p60_tree_s} & \val{honest_n16_p60_work} & \val{honest_n16_p60_wall_min_s}--\val{honest_n16_p60_wall_max_s} & \val{honest_n16_p60_maxrss_gb}\\
\bottomrule
\end{tabular}
\end{table}

\subsection{Attribution}\label{sec:exp:attr}

Table~\ref{tab:attr} adds the levers one at a time on the $14\times14$ grid, five seeds on
the computers of the ladder, from the dive incumbent. The arm \texttt{base} is the fixing at the root on the
dense master, \texttt{loss} adds the loss coordinates, and \texttt{full} the propagation
between tiles and the refixing. The loss coordinates save root time and memory and not the tree (the
columns they keep differ from those of \texttt{base} because the reduced costs come from
another optimal basis of the same degenerate LP). The propagation shortens the tree, which falls
from \val{cor2_n14_loss_tree_s} to
\val{cor2_n14_full_tree_s} s on \val{cor2_n14_full_kept} columns instead of
\val{cor2_n14_loss_kept}. At $16\times16$, with 10\,800 s for the tree,
\texttt{base} certifies \val{cor2_n16_base_certified} of \val{cor2_n16_base_runs} seeds with
its own dive at \val{cor2_n16_base_inc} kW. It certifies still
\val{cor2_n16_baseinc_certified} of \val{cor2_n16_baseinc_runs} when it is given the
incumbent of \texttt{full} (\val{cor2_n16_baseinc_inc} kW), with \val{cor2_n16_baseinc_kept} columns in the tree and a bound
of \val{cor2_n16_baseinc_bound} at the limit. The \texttt{full} arm certifies
\val{cor2_n16_full_certified} of \val{cor2_n16_full_runs}. At the same incumbent, then,
\texttt{full} closes where \texttt{base} does not; this comparison does not separate the
loss coordinates, the propagation and the refixing, which the arms of the $14\times14$ grid
do, where the refixing never fires. Finally, we compared the Benders cut against the upwind cut with the $U$ of
Lemma~\ref{lem:support}, on the design V3+H3 without fixing and five seeds. The median tree
takes \val{cor2_n10_cut_tangent_tree_s} against \val{cor2_n10_cut_upwind_tree_s} s at
$10\times10$ and \val{cor2_n12_cut_tangent_tree_s} against \val{cor2_n12_cut_upwind_tree_s}
s at $12\times12$, with \val{cor2_n12_cut_tangent_nodes} against
\val{cor2_n12_cut_upwind_nodes} nodes: parity in time, fewer nodes.

\begin{table}[t]
\centering\small
\caption{The levers on the $14\times14$ grid, five seeds on the computers of the ladder, dive incumbent, medians:
wall and root seconds, peak memory (GB), columns in the tree, tree seconds and nodes.
\texttt{base}: fixing at the root; \texttt{loss}: plus the loss coordinates; \texttt{full}:
plus propagation between tiles and refixing.}
\label{tab:attr}
\begin{tabular}{l rrrrrr}
\toprule
arm & wall s & root s & GB & columns & tree s & nodes\\
\midrule
\texttt{base} & \val{cor2_n14_base_wall_s} & \val{cor2_n14_base_root_s} & \val{cor2_n14_base_maxrss_gb} & \val{cor2_n14_base_kept} & \val{cor2_n14_base_tree_s} & \val{cor2_n14_base_nodes}\\
\texttt{loss} & \val{cor2_n14_loss_wall_s} & \val{cor2_n14_loss_root_s} & \val{cor2_n14_loss_maxrss_gb} & \val{cor2_n14_loss_kept} & \val{cor2_n14_loss_tree_s} & \val{cor2_n14_loss_nodes}\\
\texttt{full} & \val{cor2_n14_full_wall_s} & \val{cor2_n14_full_root_s} & \val{cor2_n14_full_maxrss_gb} & \val{cor2_n14_full_kept} & \val{cor2_n14_full_tree_s} & \val{cor2_n14_full_nodes}\\
\bottomrule
\end{tabular}
\end{table}

\subsection{A pre-specified test set of unseen instances}\label{sec:exp:panel}

The instances of the ladder are those on which the pipeline was developed. To measure what
it does on instances it has not seen, we generated a test set of \val{panel_instances}
instances from a seed fixed before any run, and ran both codes on it once. The test set varies
three things the ladder keeps fixed. Each grid of $10\times10$, $12\times12$ and
$14\times14$ receives three masks of \emph{forbidden} cells, where no turbine may stand
while the wind still propagates, a feature of the EndoWake code.
The masks are a block, a connected region of 15\,\% of the cells; a corridor, the cells
within one cell of a random line; and scattered cells, 10\,\% of the cells drawn at random
(\val{panel_forbidden_min} to \val{panel_forbidden_max} forbidden cells per instance). For every grid and mask,
we consider two budgets, $\Nmax = n - 2$ and $n + 2$, the second being that of the ladder, and
two roses, the uniform one and the rose of weights $0.4, 0.3, 0.2, 0.1$ on the four
directions. The \val{panel_instances} instances are thus the full product of three grids,
three masks, two budgets and two roses. Every instance runs on
three seeds per code, with the limit of 3600 s. The pipeline is the design of the ladder,
unchanged, with the LP dive as incumbent; the branch-and-check is the frozen code, extended
by the few lines that read the forbidden cells and the direction weights, and nothing else.
The pipeline runs on the computers of the ladder (\texttt{\val{panel_full_host}}), the
branch-and-check on a second partition of the cluster of the ladder (\texttt{\val{panel_ejor_host}}:
Intel Xeon E5-2623 v3 at 3.00 GHz, four cores, 15 GB, one job per computer), two CPUs of
similar clock, each disclosed with its arm. The counts below are outcomes under these two
configurations, within a limit of 3600 s on the tree of each code.

Table~\ref{tab:panel} reports the test set per grid and code; Appendix~\ref{app:panel} breaks it down by
budget, rose and mask. The pipeline certifies \val{panel_full_certified} of its
\val{panel_full_runs} runs and every instance on at least one seed. Its
\val{panel_full_censored} runs at the limit are at $14\times14$ and end with a gap of
\val{panel_full_censored_gap_max_pct}\,\% at most; every run, censored or not, ends with
a layout within \val{panel_full_below_best_max_pct}\,\% of the best of its instance.
The root gap of the pipeline stays below \val{panel_full_root_gap_pct_max}\,\% on every run,
and \val{panel_full_kept_final_pct}\,\% of the patterns reach the tree in the median. We
checked every final layout of both codes against its mask and its budget. The tight budget
is the hard case for the pipeline: at $\Nmax = n - 2$ the tree takes
\val{panel_full_bm2_nodes} nodes in the median, against \val{panel_full_bp2_nodes} at
$n + 2$, where the root closes most instances.
The branch-and-check certifies \val{panel_ejor_certified} of its \val{panel_ejor_runs} runs:
all \val{panel_ejor_n10_runs} at $10\times10$, in \val{panel_ejor_n10_wall_s} s in the median
against \val{panel_full_n10_wall_s} s for the pipeline, \val{panel_ejor_n12_certified} at
$12\times12$, and none at $14\times14$, where the pipeline certifies
\val{panel_full_n14_certified} of \val{panel_full_n14_runs} in \val{panel_full_n14_wall_s} s
in the median. Its root gap is \val{panel_ejor_root_gap_pct}\,\% in the median
(\val{panel_ejor_root_gap_pct_max}\,\% at most), against \val{panel_full_root_gap_pct}\,\%
for the pipeline; its runs at the limit end with a gap of up to
\val{panel_ejor_censored_gap_max_pct}\,\% and a layout up to
\val{panel_ejor_below_best_max_pct}\,\% below the best of their instance. On the
\val{panel_both_certified} pairs where both codes certify the same instance and seed, the two
codes return the same exact energy (relative difference \val{panel_agree_max_pct}\,\%), on
\val{panel_pairs_same_layout} of them the same layout and on \val{panel_pairs_other_layout}
two layouts of equal energy. On \val{panel_full_only} pairs only the pipeline certifies,
on \val{panel_ejor_only} only the branch-and-check, and on \val{panel_neither} neither.

\begin{table}[t]
\centering\small
\caption{The pre-specified test set of Section~\ref{sec:exp:panel}: \val{panel_instances}
unseen instances (three grids, three masks of forbidden cells, two budgets, two roses),
three seeds per instance and code, 3600 s on the tree of each code. Per grid and code: certified runs over
runs; the root bound of each code against the best exact energy found on the instance by
either code, in percent (median and, in parentheses, maximum over the runs); the columns
that reach the tree after the fixing and the propagation, as a share of the patterns
(median); nodes and wall seconds (medians); the gap at the end, bound against exact energy,
in percent (median); the final
layout of a run against the best of its instance (maximum); every maximum is rounded
outward. The pipeline runs on the computers
of the ladder, the branch-and-check on a second partition of the cluster of the ladder.}
\label{tab:panel}
\footnotesize\setlength{\tabcolsep}{4pt}
\begin{tabular}{l l r r r r r r r}
\toprule
grid & code & cert. & root gap \% & kept \% & nodes & wall s & gap \% & below best \%\\
\midrule
$10\times10$ & pipeline & \val{panel_full_n10_certified}/\val{panel_full_n10_runs} & \val{panel_full_n10_root_gap_pct} (\val{panel_full_n10_root_gap_pct_max}) & \val{panel_full_n10_kept_final_pct} & \val{panel_full_n10_nodes} & \val{panel_full_n10_wall_s} & \val{panel_full_n10_gap_pct} & \val{panel_full_n10_below_best_max_pct}\\
 & branch-and-check & \val{panel_ejor_n10_certified}/\val{panel_ejor_n10_runs} & \val{panel_ejor_n10_root_gap_pct} (\val{panel_ejor_n10_root_gap_pct_max}) & -- & \val{panel_ejor_n10_nodes} & \val{panel_ejor_n10_wall_s} & \val{panel_ejor_n10_gap_pct} & \val{panel_ejor_n10_below_best_max_pct}\\
$12\times12$ & pipeline & \val{panel_full_n12_certified}/\val{panel_full_n12_runs} & \val{panel_full_n12_root_gap_pct} (\val{panel_full_n12_root_gap_pct_max}) & \val{panel_full_n12_kept_final_pct} & \val{panel_full_n12_nodes} & \val{panel_full_n12_wall_s} & \val{panel_full_n12_gap_pct} & \val{panel_full_n12_below_best_max_pct}\\
 & branch-and-check & \val{panel_ejor_n12_certified}/\val{panel_ejor_n12_runs} & \val{panel_ejor_n12_root_gap_pct} (\val{panel_ejor_n12_root_gap_pct_max}) & -- & \val{panel_ejor_n12_nodes} & \val{panel_ejor_n12_wall_s} & \val{panel_ejor_n12_gap_pct} & \val{panel_ejor_n12_below_best_max_pct}\\
$14\times14$ & pipeline & \val{panel_full_n14_certified}/\val{panel_full_n14_runs} & \val{panel_full_n14_root_gap_pct} (\val{panel_full_n14_root_gap_pct_max}) & \val{panel_full_n14_kept_final_pct} & \val{panel_full_n14_nodes} & \val{panel_full_n14_wall_s} & \val{panel_full_n14_gap_pct} & \val{panel_full_n14_below_best_max_pct}\\
 & branch-and-check & \val{panel_ejor_n14_certified}/\val{panel_ejor_n14_runs} & \val{panel_ejor_n14_root_gap_pct} (\val{panel_ejor_n14_root_gap_pct_max}) & -- & \val{panel_ejor_n14_nodes} & \val{panel_ejor_n14_wall_s} & \val{panel_ejor_n14_gap_pct} & \val{panel_ejor_n14_below_best_max_pct}\\
\bottomrule
\end{tabular}
\end{table}

\paragraph{Reproducibility.} Every number of this section is read from the archives of the
campaigns by one script per campaign (the Slurm logs, the JSON of each run, the solver logs
and the memory from \texttt{sacct}). The incumbent of every run is the run's own, and no
layout, root or verdict is carried from one run to another. The two exceptions, both by
construction, are the \texttt{baseinc} arm, which receives the incumbent of \texttt{full},
and the cutoff runs of the branch-and-check at $16\times16$, which receive the optimum as
cutoff. The code of the
pipeline, the archives of the campaigns and the scripts that read them are deposited on
Zenodo~\citep{fischetti2026package}.

\section{Scope and limitations}\label{sec:limits}

We now discuss how far the approach extends beyond the setting of Section~\ref{sec:exp}, and where it stops.
The validity of the tile inequalities rests on two properties only: the restriction of a
feasible layout to a tile is one of its patterns, and turbines outside a tile can only lower
the power of its cells. The proof of Proposition~\ref{prop:columns}(a) therefore carries
over to any model whose cell rewards are non-decreasing in a field that is a variable of the
model and antitone in the layout. Exactness needs more, an optimality cut that is tight at
integer layouts: the cube coordinate supplies it for the unshifted cubic program, and
antitonicity alone does not give the concavity on which it rests. How tight the tiles are,
and what their enumeration costs, has to be measured model by model.

\paragraph{Roses.} The pipeline does not exploit the symmetries of the uniform rose: no symmetry device, no
stored layout, and the tables of the tiles are built per instance. What it takes from the
ladder is the axial geometry (Tables~\ref{tab:rose} and~\ref{tab:16}). Both families have
one free-stream speed. Below rated, a speed class changes only a factor. The inequalities
\eqref{eq:flow}--\eqref{eq:wake} are homogeneous in $(V, w)$, so the field at speed $V_k$ is
$V_k/V$ times the field at $V$, and the power of every cell scales by $(V_k/V)^3$, for every
layout. This holds as long as no installed cell falls below cut-in and the calibration of
the stencil is the same at every speed. A rose with speed classes is then the single-speed rose
with direction weights proportional to $\sum_k p_{s,k} V_k^3$, which the non-uniform roses
of Section~\ref{sec:exp} cover. Above rated the plateau binds, and the classes are
scenarios we have not measured.

The oblique wake is where the tiles stop being tight. Their inequalities stay valid, but the
tiles are axial strips, and on a rose of twelve directions the wake of a turbine crosses a strip in a few cells. The pattern powers are
then far from the field at the root, and the fixing removes less. Table~\ref{tab:d12} measures
it on the pipeline of the ladder, with the eight oblique directions propagated plane by plane
along the dominant axis, the EndoWake stencil drifted and stretched
to the direction, which the EndoWake paper leaves uncalibrated. At $10\times10$ the root gap goes
from \val{d12_ref4_n10_root_gap_pct} to \val{d12_n10_root_gap_pct}\,\%, the fixing keeps
\val{d12_n10_kept_pct}\,\% of the columns instead of \val{d12_ref4_n10_kept_pct}\,\%, and
the tree still closes the grid, in \val{d12_n10_tree_s} s and \val{d12_n10_nodes} nodes
on every seed. From $12\times12$ on the fixing keeps every column: the
$12\times12$ stops at 10\,800 s with a gap of \val{d12_n12_gap_pct}\,\%, and the $14\times14$
barely leaves the root. The tight root bound of the axial case is a property of the axial
propagation, where a pattern of a strip captures the dominant along-wind interactions. The oblique
propagation as implemented relaxes it by one to two orders of magnitude, and the table
measures the pipeline on it, not the physics. Tiles along every direction, strips on each of the six axes glued
through the same $x$, recover the bound at the optimum and not at the root. Six families of
overlapping blocks make a loose relaxation where two make a tight one, and the LP becomes
the cost. The classical benchmark of \citet{mosetti1994} is of this kind: a $10\times10$ grid,
with a pairwise wake model and, in two of its three cases, a rose of 36 directions.
\citet{cazzaro2022vns} use it in this journal with a fixed number of turbines.
Our working hypothesis is that the fine rose is the open case and not a limit of the device: the
columns are the right object there too. What is missing is not the rotation of the
strips, which we tried, but a coupling between the families stronger than the shared site
variables. The consistency equations of \citet{clausen2022} on the overlaps are the natural
candidate, and the LP their cost: the code has a two-cell form of them as an option, the
pipeline does not use it, and whether the full equations close the oblique gap at an
affordable root is open.

\begin{table}[t]
\centering\small
\caption{The rose of twelve equispaced directions on the pipeline of Table~\ref{tab:ladder}
(dive incumbent; 3600 s for the tree at $10\times10$, 10\,800 s above; five seeds, three at
$14\times14$), against the four axial directions on the same grids. The oblique wake is
propagated by the reference code as implemented, uncalibrated. Root gap against the certified
optimum at $10\times10$ and the best layout found above; columns kept by the fixing and after
the propagation, as a share of the patterns; nodes, tree seconds and the gap at the end,
bound against the exact energy of the final layout (medians); peak memory. The $14\times14$ rows at twelve directions run on a shared Xeon
node, the others on the computers of the ladder.}
\label{tab:d12}
\footnotesize\setlength{\tabcolsep}{4pt}
\begin{tabular}{l r r r r r r r r r}
\toprule
grid & dirs & cert. & root gap \% & kept \% & propagated \% & nodes & tree s & gap \% & GB\\
\midrule
$10\times10$ & 4 & \val{d12_ref4_n10_certified}/\val{d12_ref4_n10_runs} & \val{d12_ref4_n10_root_gap_pct} & \val{d12_ref4_n10_kept_pct} & \val{d12_ref4_n10_kept_final_pct} & \val{d12_ref4_n10_nodes} & \val{d12_ref4_n10_tree_s} & \val{d12_ref4_n10_gap_pct} & \val{d12_ref4_n10_maxrss_gb}\\
 & 12 & \val{d12_n10_certified}/\val{d12_n10_runs} & \val{d12_n10_root_gap_pct} & \val{d12_n10_kept_pct} & \val{d12_n10_kept_final_pct} & \val{d12_n10_nodes} & \val{d12_n10_tree_s} & \val{d12_n10_gap_pct} & \val{d12_n10_maxrss_gb}\\
$12\times12$ & 4 & \val{d12_ref4_n12_certified}/\val{d12_ref4_n12_runs} & \val{d12_ref4_n12_root_gap_pct} & \val{d12_ref4_n12_kept_pct} & \val{d12_ref4_n12_kept_final_pct} & \val{d12_ref4_n12_nodes} & \val{d12_ref4_n12_tree_s} & \val{d12_ref4_n12_gap_pct} & \val{d12_ref4_n12_maxrss_gb}\\
 & 12 & \val{d12_n12_certified}/\val{d12_n12_runs} & \val{d12_n12_root_gap_pct} & \val{d12_n12_kept_pct} & \val{d12_n12_kept_final_pct} & \val{d12_n12_nodes} & \val{d12_n12_tree_s} & \val{d12_n12_gap_pct} & \val{d12_n12_maxrss_gb}\\
$14\times14$ & 4 & \val{d12_ref4_n14_certified}/\val{d12_ref4_n14_runs} & \val{d12_ref4_n14_root_gap_pct} & \val{d12_ref4_n14_kept_pct} & \val{d12_ref4_n14_kept_final_pct} & \val{d12_ref4_n14_nodes} & \val{d12_ref4_n14_tree_s} & \val{d12_ref4_n14_gap_pct} & \val{d12_ref4_n14_maxrss_gb}\\
 & 12 & \val{d12_n14_certified}/\val{d12_n14_runs} & \val{d12_n14_root_gap_pct} & \val{d12_n14_kept_pct} & \val{d12_n14_kept_final_pct} & \val{d12_n14_nodes} & \val{d12_n14_tree_s} & \val{d12_n14_gap_pct} & \val{d12_n14_maxrss_gb}\\
\bottomrule
\end{tabular}
\end{table}

\paragraph{The next grid.} At $17\times17$ the single job, given 512 GB, leaves the grid
open. The heuristic reaches \val{honest_n17_uniform_heur_best} kW on three seeds, in
\val{honest_n17_uniform_heur_s} s and \val{honest_n17_uniform_starts_at_exact} of
\val{honest_n17_uniform_starts} starts. The explicit root has \val{honest_n17_uniform_patterns}
patterns against \val{honest_n16_uniform_patterns} at $16\times16$, takes
\val{honest_n17_uniform_root_s} s and \val{honest_n17_uniform_maxrss_gb} GB, and its gap is
\val{honest_n17_uniform_root_gap_pct}\,\% against \val{honest_n16_uniform_root_gap_pct}. After
10\,800 s the tree holds \val{honest_n17_uniform_kept_final} columns on every seed, and the
best bound leaves \val{honest_n17_uniform_gap_min_pct}\,\%. A longer run
from the same incumbent, on shared Xeon nodes, stops at the job limit after
\val{tiles17inc_v4s2h4s2_tree_s_min}--\val{tiles17inc_v4s2h4s2_tree_s_max} s of tree. It
has explored \val{tiles17inc_v4s2h4s2_nodes_min}--\val{tiles17inc_v4s2h4s2_nodes_max} nodes,
\val{tiles17inc_v4s2h4s2_open_nodes_min}--\val{tiles17inc_v4s2h4s2_open_nodes_max} are
open, and the gap is
\val{tiles17inc_v4s2h4s2_gap_min_pct}--\val{tiles17inc_v4s2h4s2_gap_max_pct}\,\%. The grid is
one step past the pipeline, in memory and time.

\paragraph{Memory.} Memory can become an issue. The columns of the
$16\times16$ grid are enumerated and stored, and the root LP prices all of them. The job
needs a node with the memory of Table~\ref{tab:16}, and \val{cor2_n16_base_maxrss_gb} GB
without the loss coordinates (arm \texttt{base} of Table~\ref{tab:attr}, which does not close). On a computer of the ladder it runs out of memory at \val{cor2_probe_5026944_0_maxrss_gb} GB. Beyond this grid the memory grows with the
patterns, that is, with the width of the strips and with their length
(\val{honest_n17_uniform_patterns} patterns at $17\times17$ against
\val{honest_n16_uniform_patterns}); the cap $\Nmax$ never binds inside a strip
of the design. A root generated by column generation is our first alternative.

\paragraph{Scope.} The paper is a certification tool for the model, not a design tool for
a real-world site. The grid, the single speed and the cubic curve below rated are those of
\citet{fischetti2026endowake}; the forbidden cells of Section~\ref{sec:exp:panel} are as
far as the device goes toward a site; and the device certifies what the model says---errors
included. The useful target, in our view, is a certificate within a tenth of a percent of a
heuristic layout on a finer rose.

\section{Conclusions}\label{sec:concl}

We have shown that the EndoWake model can benefit from a Dantzig--Wolfe reformulation on tiles. The
layout of a tile is a convex combination of its patterns, enumerated once with their
powers. Antitonicity makes the pattern powers valid bounds on the cells, and the Benders cut
of the cube coordinate restores exactness at integer layouts in the unshifted cubic program; the
implementation carries a shift that keeps every bound valid, with a residual measured in
Section~\ref{sec:exp:setup}. We have then shown that the
columns need not be branched on. A good incumbent, the reduced costs of the root and a
propagation between overlapping tiles shrink the master to a size the tree closes in one
job. The incumbent is the optimum from the tile heuristic on the GPU, or a layout
\val{honest_ref_cor2full_inc_gap_pct}\,\% under it from the LP dive of the cold start at
$16\times16$. This holds on the grids up to $16\times16$ of the EndoWake ladder
and on two non-uniform roses at $16\times16$. The
branch-and-check leaves the $16\times16$ grid open, also on the
node of our cold start and with the optimum as cutoff. On six
non-uniform roses up to $14\times14$ the fixing at the root alone suffices. On a
pre-specified test set of \val{panel_instances} unseen instances with forbidden cells, two
budgets and a non-uniform rose, the pipeline certifies every instance up to $14\times14$.
The branch-and-check certifies the $10\times10$ instances of that test set,
\val{panel_ejor_n12_certified} runs at $12\times12$ and none at $14\times14$.
What the device
does not yet make tight is the oblique wake, where its inequalities stay valid but loose.
What it does not yet afford is the next grid:
at $17\times17$ the explicit root takes \val{honest_n17_uniform_maxrss_gb} GB and three
hours of tree leave \val{honest_n17_uniform_gap_min_pct}\,\% open. Future research should
therefore address the fine rose, where strips along the directions need a coupling stronger
than the shared site variables, and the next grid, where the columns should be priced in
the tree rather than enumerated at the root.

\appendix
\section{Validity of the reductions}\label{app:proofs}

The pipeline applies the argument of Section~\ref{sec:shrink} in the tree, with a moving incumbent.

\begin{proposition}\label{prop:valid}
Let the master carry the columns \eqref{eq:columns} of a family of tiles and any set of
inequalities valid at every feasible layout, let $v$ be the value of its root LP, and let
$\bar c$ be the reduced costs at an optimal dual solution of that LP, with the bounds of
the variables counted among the constraints so that $\bar c \le 0$. Let $L$ be any value,
and call a column $\lambda^R_T$ \emph{dead} for $L$ if $v + \bar c^R_T < L$. Then:
(a) no layout of energy above $L$ uses a dead column, at the root as in every node of the
tree, and a column dead for $L$ is dead for every $L' \ge L$;
(b) a layout of energy above $L$ uses only columns that survive the propagation between
tiles;
(c) every layout of energy above $L$ is, with its exact powers, a feasible point of the
master reduced to the surviving columns; hence a bound $U$ on the reduced master bounds
every layout of energy above $L$, and if the reduced master has no solution above $L$, no
layout has energy above $L$.
\end{proposition}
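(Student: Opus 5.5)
The argument is LP duality applied to one explicit feasible point, followed by two monotonicity observations.

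\emph{Feasible point.} Fix a feasible layout $X$ of energy $E(X) > L$ and set $\bar x$ its indicator and $z^s_c = x_c\,p^s_c(X)$. Take $\lambda^R$ the indicator of the pattern $X\cap R$ on every tile. By Proposition~\ref{prop:columns}(a) this point satisfies \eqref{eq:columns}. Every other inequality of the master is valid at every feasible layout by hypothesis, so the point is feasible for the root LP, with objective $E(X)$.

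\emph{Part (a), root.} Let $(y,\bar c)$ be the optimal dual solution, with the variable bounds counted among the constraints so that $\bar c \le 0$. The Lagrangean identity gives, for every primal feasible point $\xi$,
\[
\text{obj}(\xi) \le v + \sum_j \bar c_j\, \xi_j ,
\]
where the sum runs over the variables not already accounted for by their bound duals. Since every term is non-positive, dropping all terms but one yields $E(X) \le v + \bar c^R_T$ whenever $\lambda^R_T = 1$ at the point above. So a column with $v + \bar c^R_T < L < E(X)$ cannot be used by $X$.

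The bookkeeping in this step is the main obstacle. The bound constraints on $x,z$ (for instance $x_c\le 1$, $z\le P(V)$) must be absorbed so that their multipliers enter $y$ and the remaining reduced costs have the right sign; only then is the inequality $\text{obj}\le v+\bar c^\top\lambda$ correct for all feasible points.

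\emph{Part (a), tree.} The root inequality concerns layouts, not nodes. Any layout better than $L$ that is feasible at a node is a feasible layout of the model, so the same argument excludes it there, with the root duals. Monotonicity is immediate: $v+\bar c < L \le L'$.

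\emph{Part (b).} Argue by induction on the propagation passes that every column used by $X$ survives. Initially this holds by (a). Suppose $X$'s columns survive before a pass. For overlapping tiles $R,R'$, the patterns $X\cap R$ and $X\cap R'$ agree on $R\cap R'$, because both equal $X\cap R\cap R'$. Each therefore has a surviving compatible partner, and neither is removed. The invariant holds at the fixed point and also under the time cap, since the cap only stops removals early.

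\emph{Part (c).} The point constructed above uses only surviving columns. Therefore it is feasible in the reduced master, including the refixing inequalities $\lambda\le 0$ on dead columns, by (a) and (b).

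Its objective there equals $E(X)$, so any bound $U$ on the reduced master satisfies $U \ge E(X)$. If the reduced master has no solution of value above $L$, then no layout has energy above $L$. The lazy cuts are valid at all layouts, so adding them in the tree preserves this.
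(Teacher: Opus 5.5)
Your proof is correct and follows the paper's own argument essentially step by step. It uses the same feasible point built from the indicators of $X\cap R$ and Proposition~\ref{prop:columns}(a), the same duality bound $E(X)\le v+\bar c^\top p\le v+\bar c^R_T$ from $\bar c\le 0$ and $p\ge 0$, the same observation that the root dual stays valid in every node because branching only restricts the feasible points, and the same agreement of $X\cap R$ and $X\cap R'$ on the overlap for the propagation. The ``bookkeeping'' you flag as the main obstacle is already settled by the hypothesis on the bounds, so the sum in your Lagrangean inequality can simply run over all variables.
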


\begin{proof}
(a) Let $X$ be a feasible layout and $p$ the point of the master with $x$ the indicator of
$X$, $z$ its exact powers and $\lambda$ the indicators of its patterns: $p$ is feasible for
the root LP by Proposition~\ref{prop:columns}(a) and the validity of the inequalities, and
its objective is $E(X)$. With $y$ an optimal dual solution, LP duality gives
$E(X) \le v + \bar c^\top p$, and $\bar c \le 0$ with $p \ge 0$ gives
$\bar c^\top p \le \bar c^R_T$ for every column with $\lambda^R_T = 1$; hence $E(X) > L$
forces $v + \bar c^R_T > L$ on every column that $X$ uses. The bound uses the root dual
only, so it holds in every node, whose branching only restricts the feasible points; the
last claim is immediate.
(b) By (a) the patterns $X \cap R$ of a layout above $L$ all survive the fixing; for every
pair of overlapping tiles the patterns $X \cap R$ and $X \cap R'$ agree on $R \cap R'$, so
each is kept by every pass of the propagation, which removes a pattern only when no
surviving pattern of a neighbor agrees with it.
(c) By (b) every layout above $L$, with its exact powers, is a feasible point of the
reduced master, which is a restriction of the master; its energy is then at most any valid
bound on the reduced master, and both claims follow.
\end{proof}

A tree that closes on the reduced master with the incumbent entered at its exact energy $L$
proves that the reduced master has no solution above its final bound $U \ge L$, hence, by
(c), that the optimum of the model lies in $[L, U]$. The master of the implementation is a
relaxation of the model (Section~\ref{sec:tiles:col}): its shifted tables and cuts and its
unshifted chords are valid at every layout, which is all that (a)--(c) use. At gap zero $U$
is the value $M$ that the master credits to the final layout, which exceeds $L$ by the
residual of Section~\ref{sec:exp:setup}. The refixing in the tree is
(a) applied with the energy $L'$ of each new incumbent.

\section{The test set by budget, rose and mask}\label{app:panel}

Table~\ref{tab:panelgroups} breaks the test set of Section~\ref{sec:exp:panel} down by budget,
rose and mask, for each code; a budget or rose group holds half of the runs of a code, and a
mask group a third. For the pipeline, the tight budget $\Nmax = n - 2$ is the hard case and
$n + 2$, the budget of the ladder, the easy one; the rose of weights $0.4, 0.3, 0.2, 0.1$
costs more nodes than the uniform one; among the masks (block, corridor and scattered cells,
called \texttt{lake}, \texttt{road} and \texttt{holes} in the deposited code), the block is
the easiest, as the root closes most of its instances, and the scattered cells are the
hardest. The runs at the limit combine the scattered cells, the tight budget and the
non-uniform rose. For the branch-and-check the grid decides, not the group: its certified
runs are those at $10\times10$ and $12\times12$ (Table~\ref{tab:panel}). Its root gap is
largest at the loose budget, which is where the root gap of the pipeline is smallest.

\begin{table}[ht]
\centering\small
\caption{The test set by group and code (the columns of Table~\ref{tab:panel}): certified runs
over runs; the root bound of each code against the best exact energy of the instance, in
percent (median, maximum in parentheses); the columns that reach the tree of the pipeline as
a share of the patterns (median); nodes and wall seconds (medians); the gap at the end, bound
against exact energy (median); the final layout against the best of its instance (maximum);
every maximum is rounded outward.}
\label{tab:panelgroups}
\footnotesize\setlength{\tabcolsep}{4pt}
\begin{tabular}{l l r r r r r r r}
\toprule
group & code & cert. & root gap \% & kept \% & nodes & wall s & gap \% & below best \%\\
\midrule
$\Nmax = n - 2$ & pipeline & \val{panel_full_bm2_certified}/\val{panel_full_bm2_runs} & \val{panel_full_bm2_root_gap_pct} (\val{panel_full_bm2_root_gap_pct_max}) & \val{panel_full_bm2_kept_final_pct} & \val{panel_full_bm2_nodes} & \val{panel_full_bm2_wall_s} & \val{panel_full_bm2_gap_pct} & \val{panel_full_bm2_below_best_max_pct}\\
 & branch-and-check & \val{panel_ejor_bm2_certified}/\val{panel_ejor_bm2_runs} & \val{panel_ejor_bm2_root_gap_pct} (\val{panel_ejor_bm2_root_gap_pct_max}) & -- & \val{panel_ejor_bm2_nodes} & \val{panel_ejor_bm2_wall_s} & \val{panel_ejor_bm2_gap_pct} & \val{panel_ejor_bm2_below_best_max_pct}\\
$\Nmax = n + 2$ & pipeline & \val{panel_full_bp2_certified}/\val{panel_full_bp2_runs} & \val{panel_full_bp2_root_gap_pct} (\val{panel_full_bp2_root_gap_pct_max}) & \val{panel_full_bp2_kept_final_pct} & \val{panel_full_bp2_nodes} & \val{panel_full_bp2_wall_s} & \val{panel_full_bp2_gap_pct} & \val{panel_full_bp2_below_best_max_pct}\\
 & branch-and-check & \val{panel_ejor_bp2_certified}/\val{panel_ejor_bp2_runs} & \val{panel_ejor_bp2_root_gap_pct} (\val{panel_ejor_bp2_root_gap_pct_max}) & -- & \val{panel_ejor_bp2_nodes} & \val{panel_ejor_bp2_wall_s} & \val{panel_ejor_bp2_gap_pct} & \val{panel_ejor_bp2_below_best_max_pct}\\
uniform rose & pipeline & \val{panel_full_uniform_certified}/\val{panel_full_uniform_runs} & \val{panel_full_uniform_root_gap_pct} (\val{panel_full_uniform_root_gap_pct_max}) & \val{panel_full_uniform_kept_final_pct} & \val{panel_full_uniform_nodes} & \val{panel_full_uniform_wall_s} & \val{panel_full_uniform_gap_pct} & \val{panel_full_uniform_below_best_max_pct}\\
 & branch-and-check & \val{panel_ejor_uniform_certified}/\val{panel_ejor_uniform_runs} & \val{panel_ejor_uniform_root_gap_pct} (\val{panel_ejor_uniform_root_gap_pct_max}) & -- & \val{panel_ejor_uniform_nodes} & \val{panel_ejor_uniform_wall_s} & \val{panel_ejor_uniform_gap_pct} & \val{panel_ejor_uniform_below_best_max_pct}\\
rose 0.4--0.1 & pipeline & \val{panel_full_p40_certified}/\val{panel_full_p40_runs} & \val{panel_full_p40_root_gap_pct} (\val{panel_full_p40_root_gap_pct_max}) & \val{panel_full_p40_kept_final_pct} & \val{panel_full_p40_nodes} & \val{panel_full_p40_wall_s} & \val{panel_full_p40_gap_pct} & \val{panel_full_p40_below_best_max_pct}\\
 & branch-and-check & \val{panel_ejor_p40_certified}/\val{panel_ejor_p40_runs} & \val{panel_ejor_p40_root_gap_pct} (\val{panel_ejor_p40_root_gap_pct_max}) & -- & \val{panel_ejor_p40_nodes} & \val{panel_ejor_p40_wall_s} & \val{panel_ejor_p40_gap_pct} & \val{panel_ejor_p40_below_best_max_pct}\\
block & pipeline & \val{panel_full_lake_certified}/\val{panel_full_lake_runs} & \val{panel_full_lake_root_gap_pct} (\val{panel_full_lake_root_gap_pct_max}) & \val{panel_full_lake_kept_final_pct} & \val{panel_full_lake_nodes} & \val{panel_full_lake_wall_s} & \val{panel_full_lake_gap_pct} & \val{panel_full_lake_below_best_max_pct}\\
 & branch-and-check & \val{panel_ejor_lake_certified}/\val{panel_ejor_lake_runs} & \val{panel_ejor_lake_root_gap_pct} (\val{panel_ejor_lake_root_gap_pct_max}) & -- & \val{panel_ejor_lake_nodes} & \val{panel_ejor_lake_wall_s} & \val{panel_ejor_lake_gap_pct} & \val{panel_ejor_lake_below_best_max_pct}\\
corridor & pipeline & \val{panel_full_road_certified}/\val{panel_full_road_runs} & \val{panel_full_road_root_gap_pct} (\val{panel_full_road_root_gap_pct_max}) & \val{panel_full_road_kept_final_pct} & \val{panel_full_road_nodes} & \val{panel_full_road_wall_s} & \val{panel_full_road_gap_pct} & \val{panel_full_road_below_best_max_pct}\\
 & branch-and-check & \val{panel_ejor_road_certified}/\val{panel_ejor_road_runs} & \val{panel_ejor_road_root_gap_pct} (\val{panel_ejor_road_root_gap_pct_max}) & -- & \val{panel_ejor_road_nodes} & \val{panel_ejor_road_wall_s} & \val{panel_ejor_road_gap_pct} & \val{panel_ejor_road_below_best_max_pct}\\
scattered & pipeline & \val{panel_full_holes_certified}/\val{panel_full_holes_runs} & \val{panel_full_holes_root_gap_pct} (\val{panel_full_holes_root_gap_pct_max}) & \val{panel_full_holes_kept_final_pct} & \val{panel_full_holes_nodes} & \val{panel_full_holes_wall_s} & \val{panel_full_holes_gap_pct} & \val{panel_full_holes_below_best_max_pct}\\
 & branch-and-check & \val{panel_ejor_holes_certified}/\val{panel_ejor_holes_runs} & \val{panel_ejor_holes_root_gap_pct} (\val{panel_ejor_holes_root_gap_pct_max}) & -- & \val{panel_ejor_holes_nodes} & \val{panel_ejor_holes_wall_s} & \val{panel_ejor_holes_gap_pct} & \val{panel_ejor_holes_below_best_max_pct}\\
\bottomrule
\end{tabular}
\end{table}

\section*{Data availability}
The code of the pipeline, the archives of the campaigns of Section~\ref{sec:exp}, the
scripts that read them and write every number of this paper, and the generators of the
instances, of the ladder and of the test set, are deposited on
Zenodo~\citep{fischetti2026package}, \url{https://doi.org/10.5281/zenodo.23636074}. The instances of the ladder are those
of \citet{fischetti2026endowake}; the test set is generated by the deposited code.

\section*{Declaration of competing interest}
The authors declare that they have no known competing financial interests or personal
relationships that could have appeared to influence the work reported in this paper.

\section*{CRediT authorship contribution statement}
\textbf{Martina Fischetti}: Conceptualization, Data curation, Investigation, Methodology,
Validation, Writing---original draft.
\textbf{Matteo Fischetti}: Conceptualization, Formal analysis, Methodology, Software,
Writing---review \& editing.

\section*{Acknowledgments}
The first author was funded by the Spanish Ministry of Science and Innovation through
project PID2024-162616OA-I00 and by the Ramon y Cajal grant RYC2023-042623-I. The work of
the second author was partially supported by MiUR, Italy.

\section*{Declaration of generative AI and AI-assisted technologies in the writing process}
During the preparation of this work the authors used Claude (Anthropic) and Codex (OpenAI)
to assist in drafting and editing the manuscript text, in developing the code and the
scripts released with the paper, and to review successive drafts as a simulated referee.
After using these tools, the authors reviewed, verified, and edited
all content---including all mathematical statements and proofs, the input data, and the
computational results---and take full responsibility for the content of the published
article.

\end{document}